\numberwithin{equation}{section}
\theoremstyle{definition}
\newtheorem{thm}{Theorem}[section]
\newtheorem{cor}[thm]{Corollary}
\newtheorem{lem}[thm]{Lemma}
\newtheorem{exa}[thm]{Example}
\newtheorem{prop}[thm]{Proposition}
\newtheorem{defi}[thm]{Definition}
\newtheorem{rem}[thm]{Remark}
\newtheorem*{shc}{Singular Hodge conjecture}
\newtheorem*{hshc}{Homological Singular Hodge conjecture}
\newtheorem*{lsa}{Lefschetz standard conjecture A}
\DeclareMathOperator{\Ima}{\mathrm{Im}}
\DeclareMathOperator{\p3}{\mathbb{P}^3}
\DeclareMathOperator{\pr}{\mathrm{pr}}
\DeclareMathOperator{\Spec}{\mathrm{Spec}}
\DeclareMathOperator{\N}{\mathcal{N}}
\DeclareMathOperator{\mo}{\mathcal{O}}
\newcommand{\mbb}[1]{\mathbb{#1}}
\newcommand{\mb}[1]{\mathbb{#1}}
\newcommand{\mc}[1]{\mathcal{#1}}
\newcommand{\mr}[1]{\mathrm{#1}}
\newcommand{\ov}[1]{\overline{#1}}
\newcommand{\wt}[1]{\widetilde{#1}}
\newcommand{\mf}[1]{\mathfrak{#1}}
\begin{document}

\title{Hodge Conjecture via Singular Varieties}

\author[A. Dan]{Ananyo Dan}

\address{CUNEF Universidad, C. de Leonardo Prieto Castro, 2, Moncloa - Aravaca, 28040 Madrid, Spain}

\email{dan.ananyo@cunef.edu}

\author[I. Kaur]{Inder Kaur}

\address{School of Mathematics \& Statistics, University of Glasgow, Glasgow G12 8QQ, U.K.}

\email{inder.kaur@glasgow.ac.uk}

\subjclass[2020]{$14$C$15$, $14$C$30$, $32$S$35$, $32$G$20$, $14$D$07$, $14$C$05$}
%$14$C$30$, $14$C$34$, , , $32$S$35$, $14$D$20$, Secondary $14$H$40$}

\keywords{Hodge conjecture,  Limit mixed Hodge
structures, Operational Chow group, Cycle class map, Singular varieties}

\date{\today}

\begin{abstract}
In this article we study the cohomological and homological (due to Jannsen) Hodge conjecture for singular varieties. The motivation for studying singular varieties comes from the fact that any smooth projective variety $X$ is birational to a (possibly singular) hypersurface $Y$ in a projective space. We prove that odd dimensional hypersurfaces with $A_n$ singularities satisfy both versions of the conjecture and moreover their (smooth) resolutions satisfy the classical Hodge conjecture, thus producing new examples of smooth varieties satisfying the classical Hodge conjecture.

% In this article, we study the classical Hodge conjecture via singular varieties. Recall that, any smooth projective variety is birational to a (possibly singular) hypersurface in a projective space. Motivated by this, we introduce the concept of Poincar\'{e} type singular varieties. We show that if such a singular variety satisfies a cohomological version of the singular Hodge conjecture, then its resolution satisfies the classical Hodge conjecture. We prove that varieties with simplicial toric singularities, quotient singuarities and ADE singularities are all examples of Poincar\'{e} type singular varieties - thus producing several new examples of smooth varieties, satisfying the classical Hodge conjecture.

\end{abstract}

\maketitle

\section{Introduction}
The underlying field will always be $\mbb{C}$.
Given a smooth, projective variety $X$, the classical \emph{Hodge conjecture in codimension} $p$ claims that every rational Hodge class in $H^{2p}(X,\mb{Q})$ is a cohomology class of an algebraic cycle of codimension $p$. The conjecture is largely open.
To the best of our knowledge there are four general strategies to 
study this conjecture. First is to use the Lefschetz hyperplane theorem in the case when $X$ is 
an ample divisor inside a smooth, projective variety which already satisfies the Hodge conjecture. 
In this case, $X$ satisfies the Hodge conjecture in all codimensions, except for the middle.
The second method is used when $X$ is a (smooth) blow down of a smooth, projective variety satisfying the Hodge conjecture, for example when 
$X$ is uniruled (see \cite{lewis, conmur}). The third method is applied mainly in the case of Fermat hypersurfaces \cite{shio}.
The strategy is to describe the Hodge classes using characters of a finite commutative group acting on the variety, thereby reducing 
the Hodge conjecture to a purely arithmetic statement. The fourth method is used in the case of abelian varieties \cite{Matt} and moduli spaces of (parabolic)
bundles over curves \cite{bishod}. The idea is to show that for a very general such variety, the Hodge classes are simply 
powers of the first Chern class of a very ample line bundle. Besides these four general methods, there are equivalent formulations 
of the Hodge conjecture \cite{dehod1, thohod1,bros} and methods that apply to very specific setups \cite{schoen, ink}.

In this article we take a different approach. Our goal is to reduce the Hodge conjecture to an analogous problem on 
a simpler, possibly singular variety. In \cite[Conjecture $7.2$]{jann} Jannsen conjectured the following homological Hodge conjecture for singular varieties:

 \begin{hshc}(Jannsen):
 Let $X$ be a complex variety. Then, for all $p \ge 0$, the cycle class map:
    \begin{equation}\label{eq:cyc-hom}
        \mr{cl}_p^{\mr{hom}}: A_p(X) \otimes \mb{Q} \to (2\pi i)^{-p} W_{-2p} H_{2p}^{\mr{BM}}(X,\mb{Q}) \cap F^{-p} H_{2p}^{\mr{BM}}(X,\mb{C}),\, \mbox{ is surjective},
    \end{equation}
    where $H_*^{\mr{BM}}(-)$ denotes the Borel-Moore homology and $A_p(X)$ is the usual Chow group.
    \end{hshc}
      
 This was formulated by Jannsen in \cite[Conjecture $7.2$]{jann}. Lewis in \cite{lewsing2} proved special cases of this conjecture. 
 However, it is not clear whether this can be used to produce new examples of varieties satisfying the 
 Hodge conjecture. More recently, de Cataldo, Migliorini and Musta{\c{t}}{\u{a}} proved the above conjecture for simplicial toric varieties (see \cite{decomb}).

    The cohomological counterpart for singular varieties is significantly more complicated (as mentioned by Jannsen \cite[p. 108]{jann}) since
    the usual Chow group is not functorial under arbitrary pullbacks. 
    However, thanks to Bloch-Gillet-Soul\'{e} \cite{soub}, (there is a functorial) cycle class map for singular, projective varieties $X$:
\begin{equation}\label{eq:cycle}
    \mr{cl}^p: A^p(X) \to  \mr{Gr}^W_{2p} H^{2p}(X,\mb{Q}) \cap F^p \mr{Gr}^W_{2p} H^{2p}(X,\mb{C})
\end{equation}
where $A^p(X)$ denotes the codimension $p$ operational Chow group (see \cite[\S $17$]{fult}), $\mr{Gr}^W_{2p} H^{2p}(X,\mb{Q})$ is the $2p$-th weight graded piece of the rational cohomology group which is known to be a pure Hodge structure of weight $p$ and $F^p$ denotes the $p$-th Hodge filtration of this graded piece. It was shown by Totaro in \cite{totchow} that $\mr{cl}^p$ does not lift to $H^{2p}(X,\mb{Q})$.  If $X$ is non-singular, then $\mr{cl}^p$ coincides with the usual cycle map. The natural cohomological Hodge conjecture for singular varieties can be stated as follows:
\begin{shc}
 Let $X$ be a projective variety. 
 Then $X$ is said to satisfy the \emph{singular Hodge conjecture in codimension} $p$, denoted $\mr{SHC}(p)$, 
 if the cycle class map \eqref{eq:cycle} is surjective. If $X$ is non-singular, then we will say that $X$ \emph{satisfies} $\mr{HC}(p)$.
\end{shc}

It is natural to ask why would one care about proving the singular Hodge conjecture when the classical (smooth) Hodge conjecture still remains wide open? Our motivation for doing this is two-fold: first it is interesting in its own right since such a statement has not been studied before. 
The much easier case of $p=1$ (i.e. singular Lefschetz $(1,1)$), has been studied by the authors in \cite{dan-chow}.
Secondly (as we show) we can produce examples of varieties satisfying the classical Hodge conjecture via singular varieties. 
An obvious strategy to producing new examples of varieties satisfying the classical Hodge conjecture may be to start with a previously 
known example and check if the blow-up along a subvariety again satisfies the classical Hodge conjecture. However, it is not known 
whether the resulting exceptional divisor will always satisfy the classical Hodge conjecture. In other words, it is unknown whether 
the property of satisfying the classical Hodge conjecture is preserved under blow-ups.  In comparison, we demonstrate in the proof 
of Theorem \ref{thm:final} that if we are willing to work with the singular Hodge conjecture, we can produce new  examples of HC 
from known examples of SHC by blowing up points where the associated exceptional divisor is not, in general, smooth. This 
motivates the idea that SHC may be a better notion to consider.

One of the main results of this article is that hypersurfaces with $A_{n}$ singularities  satisfy the singular Hodge conjecture, the homological Jannsen conjecture and their resolution (which are smooth varieties) actually satisfies the original Hodge conjecture. As mentioned earlier, all smooth, projective varieties are birational to (possibly singular) hypersurfaces. Moreover, $A_n$ singularities are one of the most commonly occurring singularities, so it makes sense to understand this case fully. We prove the following:

\begin{thm}\label{thm:intro-1}
Let $Y$ be a smooth, projective variety of dimension $2m$ satisfying the Hodge conjecture, $X \subset Y$ very ample divisor of $Y$ with at worst $A_n$ singularities. Then $X$ satisfies the singular Hodge conjecture.
\end{thm}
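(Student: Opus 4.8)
The plan is to reduce $\mathrm{SHC}(p)$ for the singular divisor $X$ to the classical Hodge conjecture for a smooth resolution, and then to known cases. Put $d=\dim X=2m-1$; since $d$ is odd, the ``middle'' cohomological degree $d$ is never of the form $2p$, so $\mathrm{SHC}(p)$ only involves $H^{2p}(X)$ with $2p\neq d$ and the notoriously hard middle case of the Hodge conjecture for an odd-dimensional variety does not occur. Fix a resolution $\sigma\colon\widetilde{X}\to X$ of the finitely many $A_{n}$-points $\Sigma:=\mathrm{Sing}(X)$; one may realise it as the strict transform of $X$ inside an iterated point blow-up $\pi\colon\widehat{Y}\to Y$ along points lying over $\Sigma$ (blowing up a point turns an $A_{n}$-point into an $A_{n-2}$-point, so finitely many steps suffice), so that $\widehat{Y}$ is again smooth projective and still satisfies the Hodge conjecture, the new classes introduced by a point blow-up of a smooth projective variety being classes of linear subspaces of the exceptional projective space, hence algebraic. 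I treat the ranges $2p<d$ and $2p>d$ separately.

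In the range $2p<d$: because $X$ is a very ample divisor of the smooth $Y$, the complement $Y\setminus X$ is affine, and the Lefschetz hyperplane theorem for possibly singular ample divisors (via Artin vanishing on $Y\setminus X$) yields an isomorphism of mixed Hodge structures $H^{2p}(Y,\mathbb{Q})\xrightarrow{\ \sim\ }H^{2p}(X,\mathbb{Q})$. Hence $H^{2p}(X,\mathbb{Q})$ is pure of weight $2p$, so $\mathrm{Gr}^{W}_{2p}H^{2p}(X)=H^{2p}(X)\cong H^{2p}(Y)$ as Hodge structures, and a Hodge class in this degree corresponds to a Hodge class of $Y$, algebraic by hypothesis. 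Applying the contravariant operational pullback $i^{*}\colon A^{p}(Y)=\mathrm{CH}^{p}(Y)\to A^{p}(X)$ along $i\colon X\hookrightarrow Y$ and the functoriality of the Bloch--Gillet--Soul\'{e} cycle class map produces a preimage under $\mathrm{cl}^{p}_{X}$, so $\mathrm{SHC}(p)$ holds here.

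In the range $2p>d$: the top weight piece injects into the resolution. The Mayer--Vietoris sequence of the blow-up square $(\widetilde{X},E,X,\Sigma)$, with $E=\sigma^{-1}(\Sigma)$, together with $H^{j}(\Sigma)=0$ for $j\geq1$, gives for every $p\geq1$ a short exact sequence of Hodge structures
\[
0\longrightarrow\mathrm{Gr}^{W}_{2p}H^{2p}(X)\xrightarrow{\ \sigma^{*}\ }H^{2p}(\widetilde{X})\longrightarrow Q\longrightarrow0,\qquad Q\subseteq\mathrm{Gr}^{W}_{2p}H^{2p}(E).
\]
It therefore suffices to prove the classical Hodge conjecture for $\widetilde{X}$ and then descend. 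For $\widetilde{X}$ in degrees $2p>d$, hard Lefschetz on $\widetilde{X}$ — cup product with a power of an ample class, an isomorphism of Hodge structures carrying algebraic classes to algebraic classes — identifies $H^{2p}(\widetilde{X})$ with $H^{2(d-p)}(\widetilde{X})$, where $2(d-p)<d$; so one is reduced to $\mathrm{HC}$ for $\widetilde{X}$ in degrees below $d$. There, the analogous exact sequence for $\widetilde{X}$ writes $H^{2p}(\widetilde{X})$ as an extension of a subspace of $H^{2p}(E)$ by $\sigma^{*}H^{2p}(X)\cong H^{2p}(Y)$: the latter is algebraic on $\widetilde{X}$ (pull cycles back from $Y$, through $X$, as above), and the former is algebraic because the exceptional divisor of an $A_{n}$-resolution in odd dimension is a union of rational varieties — smooth and singular quadrics and iterated projective bundles over quadrics — with vanishing odd cohomology and even cohomology spanned by classes of exceptional subvarieties, which already lie on $\widetilde{X}$.

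Finally one descends back to $X$. For $p\geq1$ the exact sequence of operational Chow groups attached to the blow-up square, together with $A^{p}(\Sigma)=0$, identifies $A^{p}(X)$ with $\ker\!\big(A^{p}(\widetilde{X})\xrightarrow{\iota^{*}}A^{p}(E)\big)$, while the cohomological sequence above identifies $\mathrm{Gr}^{W}_{2p}H^{2p}(X)$ with $\ker\!\big(H^{2p}(\widetilde{X})\to\mathrm{Gr}^{W}_{2p}H^{2p}(E)\big)$. Given a Hodge class $\gamma\in\mathrm{Gr}^{W}_{2p}H^{2p}(X)$, the class $\sigma^{*}\gamma$ is algebraic on $\widetilde{X}$ by the previous step, say $\sigma^{*}\gamma=\mathrm{cl}^{p}_{\widetilde{X}}(\widetilde{Z})$; compatibility of the cycle class maps with $\iota^{*}$ then forces $\mathrm{cl}^{p}_{E}(\iota^{*}\widetilde{Z})=0$, so once one knows that $\mathrm{cl}^{p}_{E}$ is \emph{injective}, $\iota^{*}\widetilde{Z}=0$ and $\widetilde{Z}$ descends to a class $Z\in A^{p}(X)$ with $\mathrm{cl}^{p}_{X}(Z)=\gamma$, giving $\mathrm{SHC}(p)$. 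I expect the crux to be exactly this: showing that rational and homological equivalence agree on the exceptional divisor of the $A_{n}$-resolution (equivalently, that its cycle class map is injective) in all the cases that occur, or else correcting $\widetilde{Z}$ by a homologically trivial cycle on $\widetilde{X}$ supported on $E$, which again relies on the explicit structure of $E$ and on surjectivity of $\iota^{*}$. A secondary, more technical obstacle is to track weights and Hodge filtrations so that every comparison map used is strictly compatible, ensuring that Hodge classes — not merely rational classes — are matched at each stage.
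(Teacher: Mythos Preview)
Your treatment of the range $2p<d=2m-1$ via the Lefschetz hyperplane theorem is correct and coincides with the paper's argument. The descent step at the end (using the blow-up square for operational Chow groups and injectivity of $\mathrm{cl}^p_E$) is also the paper's mechanism, and the injectivity you flag as ``the crux'' is established in the paper as Lemma~\ref{lem:van-qua}.

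The genuine gap is in your route to $\mathrm{HC}$ for $\widetilde X$ in the range $2p>d$. Your hard Lefschetz reduction is valid in the direction you use it: $L^{2p-d}$ takes algebraic classes to algebraic classes and is an isomorphism of Hodge structures, so $\mathrm{HC}(d-p)$ for $\widetilde X$ does imply $\mathrm{HC}(p)$. The problem is establishing $\mathrm{HC}(d-p)$ for $\widetilde X$ when $d-p=m-1$, i.e.\ $p=m$. If $n$ is odd, the last blow-up in the chain \eqref{eq:blow} produces a \emph{smooth} quadric $Q_N\subset\mathbb P^{2m-1}$ of even dimension $2m-2$, whose middle cohomology $H^{2(m-1)}(Q_N,\mathbb Q)\cong\mathbb Q^2$ carries the two rulings. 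Your assertion that the extra classes are ``classes of exceptional subvarieties, which already lie on $\widetilde X$'' does not give what you need: a codimension-$(m-1)$ cycle on $Q_N$ has codimension $m$ in $\widetilde X$, so pushforward lands in $H^{2m}(\widetilde X)$, not $H^{2(m-1)}(\widetilde X)$. What you actually need are codimension-$(m-1)$ cycles on $\widetilde X$ whose \emph{restrictions} to $Q_N$ hit both rulings separately, and only the diagonal class $[A]+[B]$ (a power of the hyperplane) is visibly of this form. The paper does not know how to do this either: Theorem~\ref{thm:final}(2) only proves $\mathrm{HC}(p)$ for $\widetilde X$ when $p\ge m$, and Corollary~\ref{cor:n-odd} shows that for $n$ odd one needs the Lefschetz standard conjecture~A to get $\mathrm{HC}(m-1)$ for $\widetilde X$.

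The paper circumvents this by a completely different argument for $p\ge m$: it never proves $\mathrm{HC}$ for $\widetilde X$ first. Instead it embeds $X$ as the central fibre of a one-parameter family $\pi:\mathcal X\to\Delta$ of very ample divisors in $Y$, base-changes and resolves to a semi-stable family $\pi^{\mathrm{ss}}$ whose central fibre is the simple normal crossings divisor $\widetilde X\cup E$. Because the general fibre is an odd-dimensional ample divisor of $Y$, the family is Mumford--Tate (Theorem~\ref{thm:mt-exa}) and the general fibre satisfies $\mathrm{HC}(p)$ by Lefschetz; Theorem~\ref{th01}, which computes the kernel of the specialization map via Proposition~\ref{prop01} and uses Proposition~\ref{prop04}, then gives $\mathrm{SHC}(p)$ for $\widetilde X\cup E$, and one descends to $X$ exactly as in your final paragraph. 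The limit-mixed-Hodge-structure input is what lets the paper handle $p=m$ for $n$ odd without ever confronting the two rulings on $Q_N$.
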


See Theorem \ref{thm:final} for a proof and Theorems \ref{th01} for a more general result. 
We remark that in \cite{schoen}, Shoen shows that a desingularization of a hypersurface $X$ in $\mb{P}^{2m}$ with at worst ordinary 
double point singularities (i.e., $A_1$-singularities) in a special configuration satisfies the Hodge conjecture. 

One of the key ingredients to proving Theorem \ref{thm:intro-1}, is to first show that for $A_n$ singularities, satisfying the singular Hodge conjecture is a birational property in the sense that if there exists a (possibly reducible) algebraic scheme satisfying the singular Hodge conjecture and 
birational to the $A_n$ singular variety, then the latter also satisfies the singular Hodge conjecture. Therefore, we can reduce the problem to proving the conjecture for a simpler variety. For this we embed our hypersurface with $A_n$ singularities as the central fibre of a smooth family, 
take the semi-stable reduction and then study the Hodge theoretic properties inherited by the central fiber from the smooth ones.
More precisely, we require that the $p$-th Hodge classes in the family satisfy a limiting property (see \S \ref{sec:mt-def} for the precise definition). We will call such a family, a \emph{Mumford-Tate family}.
Examples of well-known Mumford-Tate families include: families of odd dimensional hypersurfaces,  products of hypersurfaces, families of Hilbert schemes of points 
 (see Theorem \ref{thm:intro-3} below). 
 We show that the property of a family being Mumford-Tate is preserved under several well-known operations and thus we can easily produce new Mumford-Tate families from old one. In particular, we prove:

\begin{thm}\label{thm:intro-3}
Let $W$ be a smooth, projective variety of dimension $n$ and 
     \[\pi_1: \mc{X} \to \Delta^*, \, \pi_2: \mc{Y} \to \Delta^*\]
    be two smooth, projective families of ample hypersurfaces in $W$ in the sense that for all $t \in \Delta^*$, $\mc{X}_t$ and $\mc{Y}_t$
    are ample hypersurfaces in $W$. Then, the following holds true: 
    \begin{enumerate}
        \item If $n$ is even then $\pi_1, \pi_2$ are Mumford-Tate families.
        \item If $\pi_1$ and $\pi_2$ are Mumford-Tate families, then $\pi_1 \times \pi_2$ are Mumford-Tate families.
        \item If $\pi_1$ is a Mumford-Tate family, then the associated family of Hilbert scheme of points of length $2$ is a Mumford-Tate family.
        \item Let $Z \subset \mc{X}$ be a section of $\pi_1$ i.e., the composition $\pi_1:Z \to \mc{X} \to \Delta^*$ is an isomorphism.
        If $\mc{X}$ is a Mumford-Tate family, then the blow-up of $\mc{X}$ along $Z$ is a Mumford-Tate family.
        \end{enumerate}
\end{thm}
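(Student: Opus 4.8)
The plan is, for each of the families built in the statement, to fix a projective semistable model over $\Delta$ and, for every $p\ge 0$, to control the limit mixed Hodge structure $H^{2p}_{\mathrm{lim}}$, its Hodge classes $W_{2p}H^{2p}_{\mathrm{lim}}\cap F^pH^{2p}_{\mathrm{lim}}$, and the specialization map from the Hodge classes of a very general smooth fibre. In all four cases the mechanism is the same: the cohomology of the fibres of the new family is a direct sum of shifted, Tate-twisted pieces built from the cohomology of the fibres of the input family or families, via a Lefschetz-, K\"unneth-, blow-up- or projective-bundle-type formula; this is a decomposition of variations of Hodge structure, so it passes to the limit mixed Hodge structures compatibly with the weight filtrations and the specialization maps; and one concludes from the Mumford-Tate hypothesis on the input, together with the elementary observations that a direct summand of a Mumford-Tate family is Mumford-Tate, that a Tate twist preserves the property, that a constant Tate summand $\mb{Q}(-k)$ (whose unique Hodge class is the fundamental class of an algebraic subvariety, hence locally constant) is trivially Mumford-Tate, and that passing to the invariants of a finite group action extracts a direct summand of the relevant variations and therefore preserves the property.

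For part (1), $\dim\mc{X}_t=\dim\mc{Y}_t=n-1$ is odd. The Lefschetz hyperplane theorem identifies $H^k(\mc{X}_t)$ with $H^k(W)$ for $k<n-1$, and combining this with the Hard Lefschetz isomorphisms on the fibres identifies $H^k(\mc{X}_t)$ for $k>n-1$ with a fixed Tate twist of $H^{2(n-1)-k}(W)$. Since $n-1$ is odd no even integer equals it, so for every $p$ the variation $R^{2p}\pi_{1,*}\mb{Q}$ is constant (being a bijective morphism of variations assembled from the restriction and Gysin maps attached to the fixed ambient $W$); hence its limit mixed Hodge structure is pure of weight $2p$ and its Hodge classes are literally the constant Hodge classes of the fibres, the specialization map being the identity. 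Therefore $\pi_1$, and symmetrically $\pi_2$, is a Mumford-Tate family.

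For part (2), the fibres of $\pi_1\times\pi_2$ are the $\mc{X}_t\times\mc{Y}_t$ and K\"unneth gives $H^{2p}(\mc{X}_t\times\mc{Y}_t)=\bigoplus_{a+b=2p}H^a(\mc{X}_t)\otimes H^b(\mc{Y}_t)$ compatibly with the variations. The product of two semistable models need not be semistable, but after a further base change and resolution the K\"unneth formula for the nearby cycle functor yields $H^{2p}_{\mathrm{lim}}(\mc{X}\times\mc{Y})=\bigoplus_{a+b=2p}H^a_{\mathrm{lim}}(\mc{X})\otimes H^b_{\mathrm{lim}}(\mc{Y})$ as mixed Hodge structures, compatibly with specialization. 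A Hodge class on the left decomposes into Hodge classes $\xi_{a,b}$ of the tensor pieces; identifying the Hodge classes of $H^a_{\mathrm{lim}}(\mc{X})\otimes H^b_{\mathrm{lim}}(\mc{Y})$ with morphisms of mixed Hodge structures from $H^a_{\mathrm{lim}}(\mc{X})^\vee$ to $H^b_{\mathrm{lim}}(\mc{Y})$, the content of the Mumford-Tate hypothesis on $\pi_1$ and $\pi_2$ is precisely that it forces each such morphism to be a limit of morphisms between the nearby fibres --- equivalently, that the Mumford-Tate group of the limit mixed Hodge structure of the product is computed from those of the factors and so does not drop below its generic value --- and summing over $(a,b)$ gives the Mumford-Tate property for $\pi_1\times\pi_2$. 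I expect this to be the main obstacle: the ``diagonal'' K\"unneth terms with $a,b$ even contribute only products of Hodge classes and are harmless, but the ``mixed'' terms can harbour Hodge classes not of product type, and controlling their degeneration is exactly what the Mumford-Tate formulation (through Mumford-Tate groups of tensor products) is designed for; the technical heart is the Hodge-theoretic rigidity that a morphism of the limit mixed Hodge structures of the factors deforms compatibly to morphisms on the nearby fibres.

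Parts (4) and (3) both follow from the principle that if $\mc{Z}\subset\mc{X}$ is a smooth projective subfamily of a Mumford-Tate family $\mc{X}$ and $\mc{Z}$ is itself Mumford-Tate, then $\mathrm{Bl}_{\mc{Z}}\mc{X}$ is Mumford-Tate: blowing up inside a suitably modified semistable model of $\mc{X}$ and applying the blow-up formula fibrewise gives $H^{2p}(\mathrm{Bl}_{\mc{Z}_t}\mc{X}_t)=H^{2p}(\mc{X}_t)\oplus\bigoplus_{k=1}^{c-1}H^{2p-2k}(\mc{Z}_t)(-k)$ with $c=\mathrm{codim}(\mc{Z}_t,\mc{X}_t)$, a decomposition of variations passing to the limit whose summands are all Mumford-Tate by hypothesis (Tate twists preserving the property). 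Part (4) is the special case $\mc{Z}=Z$, which is fibrewise a point and so trivially Mumford-Tate. For part (3), $\mathrm{Hilb}^2$ of a smooth variety is the quotient by $\mb{Z}/2$ of the blow-up of its self-product along the diagonal; applied fibrewise, $\mc{X}\times_{\Delta^*}\mc{X}$ is Mumford-Tate by part (2), the relative diagonal is isomorphic to $\mc{X}$ and hence Mumford-Tate, so the blow-up is Mumford-Tate by the principle above, and passing to $\mb{Z}/2$-invariants --- which can be arranged compatibly with the degeneration and extracts a direct summand --- shows the Hilbert-scheme family is Mumford-Tate. Beyond the obstacle in part (2), the remaining work is bookkeeping: arranging semistable, or at least nearby-cycle-friendly, models compatible with products, blow-ups along subfamilies, and finite quotients, and verifying the compatibility of all these cohomological decompositions with the weight filtrations and the specialization maps.
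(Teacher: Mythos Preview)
Your treatments of (1) and (4) are essentially the paper's arguments. The substantive gap is in (2), which (3) then inherits.

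You try to prove (2) in full generality, handling the K\"unneth summands $H^a(\mc{X}_\infty)\otimes H^b(\mc{Y}_\infty)$ via a claim about Mumford--Tate groups: that the Mumford--Tate hypothesis on $\pi_1$ and $\pi_2$ forces every Hodge class in such a tensor product to deform to nearby fibres. This is not what the paper's definition gives you. ``Mumford--Tate family of weight $p$'' (\S\ref{sec:mt-def}) asserts only that $H^{2p}_{\mr{Hdg}}(\mc{X}_t)^{\mr{inv}}\to H^{2p}_{\mr{Hdg}}(\mc{X}_\infty)^{\mr{inv}}$ is surjective; it is a statement about Hodge classes in a single $H^{2p}$, not about the Tannakian Mumford--Tate group, and it places no constraint on Hodge classes appearing in $H^a(\mc{X}_\infty)\otimes H^b(\mc{Y}_\infty)$ with $a,b$ odd. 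Your key sentence --- that the hypothesis ``is precisely that it forces each such morphism to be a limit of morphisms between the nearby fibres'' --- is therefore unsupported: nothing in the definition prevents the limit of $H^a\otimes H^b$ from acquiring an extra Hodge class even when each factor behaves well in every even degree.

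The paper avoids this entirely by using the ambient-hypersurface hypothesis, which you drop. Lefschetz hyperplane together with Poincar\'e duality makes $H^i(\mc{X}_t)$ a \emph{constant} variation (isomorphic to a fixed twist of $H^*(W)$) for every $i\ne n-1$. After the Hard Lefschetz reduction of Proposition~\ref{prop:hl} to the range $2p\le n-1$, each K\"unneth piece $H^i(\mc{X}_\infty)\otimes H^{2p-i}(\mc{Y}_\infty)$ either has both factors constant, or is $H^{n-1}(\mc{X}_\infty)\otimes H^0$ or $H^0\otimes H^{n-1}(\mc{Y}_\infty)$; the genuinely mixed term $H^{n-1}\otimes H^{n-1}$ never appears. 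Thus the Mumford--Tate property of each non-constant summand follows from the hypothesis on a \emph{single} factor, and no tensor-product argument is needed. Your semistable-model and nearby-cycle apparatus is likewise not used; the paper argues directly with the limit mixed Hodge structure over $\Delta^*$. For (3) the paper realizes $\mc{X}_t^{[2]}$ as the blow-up of the symmetric product $\mc{X}_t^{(2)}$ along the diagonal (rather than your $\mb{Z}/2$-quotient of $\mr{Bl}_\Delta(\mc{X}_t\times\mc{X}_t)$) and runs a diagram chase, but both routes rest on (2).
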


See Theorem \ref{thm:mt-exa} for a proof. More generally, if a smooth, projective family arises from a representable functor defined over a 
Mumford-Tate family, then the resulting family is again Mumford-Tate (see Theorem \ref{thm:lim-mt}). 
For example, moduli space of sheaves over Mumford-Tate families of surfaces is again Mumford-Tate (see Example \ref{sec:mod}). 
Similarly, families of Fano varieties of lines associated to Mumford-Tate families of cubic hypersurfaces is a Mumford-Tate family 
(see Example \ref{sec:fano}).

{\bf{Application to the classical Hodge conjecture and Jannsen's conjecture}}: The above theorems lead us to conclude the following results about Jannsen's conjecture and the classical Hodge conjecture. 

\begin{thm}\label{thm:intro-2}
Let $Y$ be a smooth, projective variety of dimension $2m$ satisfying the Hodge conjecture, $X \subset Y$ very ample divisor of $Y$ with at worst 
$A_{2k}$-singularities. Then, 
\begin{enumerate}
\item $X$ satisfies Jannsen's conjecture, 
\item the resolution $\wt{X}$ obtained by blowing-up the singular points (repeatedly) of $X$, satisfies the classical Hodge conjecture.
\end{enumerate}
\end{thm}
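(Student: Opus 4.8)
The plan is to deduce both statements from Theorem~\ref{thm:intro-1}, which already grants $X$ the singular Hodge conjecture in every codimension, by propagating $\mr{SHC}$ along the resolution and transporting cycles through the resolution map; we may assume $\dim X = 2m-1 \ge 3$, the curve case being trivial. Let $\sigma\colon \wt X \to X$ be the resolution obtained by repeatedly blowing up the isolated singular points. First I would record the local geometry of $\sigma$: an $A_{2j}$-point on a $(2m-1)$-fold is formally $\{x_0^{2j+1}+x_1^2+\cdots+x_{2m-1}^2 = 0\}$, so for $j\ge 1$ its tangent cone is the rank-$(2m-1)$ quadric $\{x_1^2+\cdots+x_{2m-1}^2 = 0\}$; hence one blow-up of the point turns it into an $A_{2j-2}$-point (a smooth point if $j = 1$) whose exceptional divisor is the projective cone $C(Q)$ over the smooth quadric $Q := Q^{2m-3}\subseteq \mb{P}^{2m-2}$. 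Thus $\sigma$ factors as $X = X_0 \leftarrow X_1 \leftarrow \cdots \leftarrow X_k = \wt X$, where $X_i$ is the blow-up of $X_{i-1}$ at its finitely many, pairwise distinct, singular points (all of analytic type $A_{2j}$ with $j\le k-i$), and every exceptional divisor of $X_i \to X_{i-1}$ is a disjoint union of copies of $C(Q^{2m-3})$.

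The main tool is a blow-up lemma: \emph{if $V$ is a projective variety satisfying $\mr{SHC}(p)$ for all $p$, $v\in V$ a closed point, and the exceptional divisor $E_v$ of $b\colon \mr{Bl}_vV \to V$ satisfies $\mr{SHC}(p)$ for all $p$, then so does $\mr{Bl}_vV$.} I would prove this by comparing, through the cycle class maps, the Mayer--Vietoris sequence of the abstract blow-up square $\{E_v\hookrightarrow \mr{Bl}_vV,\ \{v\}\hookrightarrow V\}$ in singular cohomology with the analogous blow-up sequence of operational Chow groups (see \cite[\S 17]{fult}). Applying the exact functor $\mr{Gr}^W_{2p}$ kills the terms $H^{2p-1}(E_v)$ (weights $< 2p$, since $E_v$ is proper) and $H^{2p}(\{v\})$ (for $p\ge 1$), giving a short exact sequence
\begin{equation*}
0 \longrightarrow \mr{Gr}^W_{2p}H^{2p}(V) \longrightarrow \mr{Gr}^W_{2p}H^{2p}(\mr{Bl}_vV) \longrightarrow K \longrightarrow 0, \qquad K \subseteq \mr{Gr}^W_{2p}H^{2p}(E_v),
\end{equation*}
of pure polarizable Hodge structures of weight $2p$, which therefore splits. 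A rational $(p,p)$-class on $\mr{Bl}_vV$ then splits into a summand pulled back from $V$ — algebraic by $\mr{SHC}(V)$ via $b^*$ of an operational class — and a summand in $K$, algebraic on $E_v$ by $\mr{SHC}(E_v)$; compatibility of the cycle maps with the operational Chow blow-up sequence should upgrade the latter to an operational class on $\mr{Bl}_vV$ (here one uses that $b\circ i\colon E_v\to V$ factors through the point $v$, so the $K$-part is detected purely on $E_v$). This upgrading is where I expect the principal difficulty: one must verify the relevant exactness — including surjectivity of restriction to the exceptional divisor — of the operational Chow blow-up sequence, together with its compatibility with the bivariant cycle class map and the weight filtration, for a blow-up centred at a \emph{singular} point of $V$, where the projective-bundle blow-up formula is unavailable.

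The base case is that $C(Q^{2m-3})$ satisfies $\mr{SHC}(p)$ for all $p$: blowing up the vertex of $C(Q)$ yields a $\mb{P}^1$-bundle $P$ over the odd-dimensional quadric $Q$, so $P$ has cohomology of Tate type spanned by algebraic cycles and hence satisfies the classical Hodge conjecture; and since $\CH^*(Q)\to H^{2*}(Q)$ is an isomorphism, the Mayer--Vietoris/weight argument above applied to $P \to C(Q)$ (exceptional divisor the contracted section $\cong Q$, centre the vertex) descends $\mr{SHC}$ to $C(Q)$ — here $\mr{Gr}^W_{2p}H^{2p}(C(Q))$ has rank one, which keeps the diagram chase tight. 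Feeding this into the blow-up lemma and iterating along $X_0 \leftarrow \cdots \leftarrow X_k$ yields $\mr{SHC}(X) \Rightarrow \mr{SHC}(X_1) \Rightarrow \cdots \Rightarrow \mr{SHC}(\wt X)$; since $\wt X$ is smooth, $\mr{SHC}(p)$ for $\wt X$ is exactly $\mr{HC}(p)$, which proves $(2)$.

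For $(1)$, note $X$ is projective, so $H_*^{\mr{BM}}(X) = H_*(X)$. Put $E = \sigma^{-1}(\mr{Sing}(X))$, a proper variety, and consider the Mayer--Vietoris sequence of $\{E \hookrightarrow \wt X,\ \mr{Sing}(X) \hookrightarrow X\}$:
\begin{equation*}
H_{2p}(E,\mb{Q}) \longrightarrow H_{2p}(\wt X,\mb{Q}) \xrightarrow{\sigma_*} H_{2p}(X,\mb{Q}) \longrightarrow H_{2p-1}(E,\mb{Q}).
\end{equation*}
As $E$ is proper, $H_{2p-1}(E,\mb{Q})$ has weights $\ge -(2p-1)$, so $W_{-2p}H_{2p-1}(E,\mb{Q}) = 0$, while $H_{2p}(\wt X,\mb{Q})$ is pure of weight $-2p$; by strictness $\sigma_*$ induces a surjection $H_{2p}(\wt X,\mb{Q}) \twoheadrightarrow W_{-2p}H_{2p}(X,\mb{Q})$ of pure polarizable Hodge structures, which splits. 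Therefore every class in $W_{-2p}H_{2p}(X,\mb{Q})\cap F^{-p}H_{2p}(X,\mb{C})$ lifts to a Hodge class $\alpha\in H_{2p}(\wt X,\mb{Q})$; Poincaré duality on the smooth $(2m-1)$-fold $\wt X$ turns $\alpha$ into a Hodge class in $H^{2(2m-1-p)}(\wt X,\mb{Q})$, which is algebraic by $(2)$, say $\alpha = [Z]$ with $Z\in A_p(\wt X)$. Then $\sigma_*Z\in A_p(X)$ realizes the chosen class under $\mr{cl}_p^{\mr{hom}}$, which commutes with proper pushforward; hence $\mr{cl}_p^{\mr{hom}}$ is surjective and $X$ satisfies Jannsen's conjecture.
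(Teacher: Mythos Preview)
Your approach coincides with the paper's: part~(2) is Theorem~\ref{thm:final}(3), proved by iterating exactly your blow-up lemma (the paper's Proposition~\ref{prop:ind}) along the resolution sequence, and part~(1) is argued identically in Corollary~\ref{cor:jann}. The ``principal difficulty'' you flag---surjectivity of $A^p(\mr{Bl}_vV)\to A^p(E_v)$ at a singular centre---is dispatched in the paper precisely via your own rank-one observation: since $A^p\bigl(C(Q^{2m-3})\bigr)=\mb{Q}\mb{L}^p$ by Lemma~\ref{lem:van-qua}, the ambient hyperplane class on $\mr{Bl}_vV$ already restricts to a generator, so no general operational blow-up formula is required.
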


See Theorem \ref{thm:final} and Corollary \ref{cor:jann} for the proof. 
Furthermore, we also show that for $A_{2k+1}$-singularities, the Lefschetz standard conjecture A implies the Hodge conjecture (see Corollary \ref{cor:n-odd}).
Finally, although we do not give the proof to avoid repetitiveness, we also expect that Theorem \ref{thm:intro-2} can be extended to more general singular varieties.
This is because the two key ingredients
for the proof of the theorem are: a birational invariance of SHC for the singular variety as mentioned above and
the existence of Mumford-Tate families containing the given singular variety as the central fiber. 
The first condition is satisfied by several other singularities (besides $A_n$-singularities) such as 
 $D_n$-singularities, ``general" hypersurface singularities, etc. The second condition is satisfied not only by odd dimensional hypersurfaces, but 
 also even dimensional varieties arising as degeneration of product of odd dimensional hypersurfaces (see Theorem \ref{thm:intro-3}).
 One can also drop the assumption of \emph{very ample hypersurface} since blow-ups of Mumford-Tate families are again Mumford-Tate.

{\bf{Notation}:} Given a projective variety $X$, we will denote by $A^p(X)$ the $p$-th operational Chow group, by $A_p(X)$ the usual Chow group 
generated by codimension $p$ closed integral subvarieties and by $\mr{cl}^p$ ($\mr{cl}_p$) the cycle class map from $A^p(X)$ (resp. $A_p(X)$)
to $\mr{Gr}^W_{2p}H^{2p}(X,\mb{Q})$ (resp. $W_{-2p}H_{2p}(X,\mb{Q})$).

\vspace{0.2 cm}

 $\bf{Outline}$: The paper is organised as follows: in \S \ref{sec:prelim}
 we recall the Bloch-Gillet-Soul\'{e} cycle class map and
 compute the kernel of the specialization map. This calculation is used in the proof of Theorem \ref{thm:intro-1}.
  In \S \ref{sec:mt}, we introduce Mumford-Tate families
  and give examples. 
  In the final section \S \ref{sec:exa}, we apply these results to produce new examples of projective varieties satisfying the (singular) Hodge conjecture
  and the Jannsen's conjecture.
 In the appendices \ref{app:mon} and \ref{app:op}, we recall basic properties of the monodromy operators and the  operational Chow group, respectively.

 {\bf{Acknowledgements}:}
This article was motivated by some questions asked by
Prof. C. Simpson, after the second author gave a talk on the article \cite{hodc} at the workshop `Moduli of bundles and related structures' held at ICTS, Bengaluru, India. We thank Prof. Simpson for his interest and the organisers for organising the workshop. We are also grateful to Prof. C. Voisin for her encouragement.

 \section{Specialization and mixed Hodge structures}\label{sec:prelim}
 In this section we briefly recall some of the basics on limit mixed Hodge  structures (see \cite[\S $11$]{pet} for details), 
 operational Chow group (see \cite[\S $17$]{fult} for details) and the Bloch-Gillet-Soul\'{e}
 cycle class map (see \cite[Appendix]{soub}).

  \subsection{Setup}\label{se01}
Consider a smooth, projective family of varieties,
 \[\pi:\mc{X} \to \Delta^*,\]
 where $\Delta^*$ is the punctured unit disc. Denote by 
 $e: \mf{h} \to \Delta^*$
 the universal covering of $\Delta^*$, 
 where $\mf{h}$ denotes the upper half plane.
 Denote by  
$\mc{X}_\infty:=\mc{X}_{\Delta^*} \times_{\Delta^*} \mf{h}$ the base change of $\mc{X}$
to $\mf{h}$ via the exponential map $e$.
For any $s \in \mf{h}$, there is a natural inclusion of fiber:
\[i_s: \mc{X}_{s} \hookrightarrow \mc{X}_\infty.\]
Since $\mf{h}$ is simply connected, this inclusion induces an isomorphism of cohomology groups:
\begin{equation}\label{eq:iden}
    i_s^*: H^{2p}(\mc{X}_\infty, \mb{Z}) \xrightarrow{\sim}
 H^{2p}(\mc{X}_{e(s)}, \mb{Z}).
\end{equation}
  Note that, the morphism $i_s^*$ changes even if $e(s)$ does not.

\subsection{Monodromy and limit mixed Hodge structures}\label{sec:lhf}
The monodromy operator 
\[T:H^{2p}(\mc{X}_\infty, \mb{Z}) \to H^{2p}(\mc{X}_\infty, \mb{Z})\]
is the automorphism induced via pullback $h^*$ by the morphism 
\[h: \mf{h} \to \mf{h}\, \mbox{ sending } u \mapsto u+1.\]
Recall that for any $u \in \mf{h}$, the monodromy operator $T$ coincides with the composition
(see \cite[p. 67, \S $2.2.13$]{kuli}):
\begin{equation}\label{eq:mon}
     H^{2p}(\mc{X}_\infty, \mb{Z}) \xrightarrow[\sim]{i_{u+1}^*} H^{2p}(\mc{X}_{e(u)}, \mb{Z}) \xrightarrow[\sim]{(i_u^*)^{-1}} H^{2p}(\mc{X}_\infty, \mb{Z}).
\end{equation}
% For any $u \in \mf{h}$, the monodromy operator (see \cite[\S $3.1$]{v6})
% \[T_u: H^{2p}(\mc{X}_{e(u)}, \mb{Z}) \xrightarrow{\sim} H^{2p}(\mc{X}_{e(u)}, \mb{Z})\]
% satisfies the following commutative diagram (see \cite[Lemma $2.4.12$]{kuli}):
% \[\begin{diagram}
%     H^{2p}(\mc{X}_\infty, \mb{Z})&\rTo^{T}_{\sim}&H^{2p}(\mc{X}_\infty, \mb{Z})\\
%     \dTo_{\sim}^{i_u^*}&\circlearrowleft&\dTo_{\sim}^{i_u^*}\\
%     H^{2p}(\mc{X}_{e(u)},\mb{Z})&\rTo^{T_u}_{\sim}&H^{2p}(\mc{X}_{e(u)},\mb{Z})
% \end{diagram}\]
Denote by  $N:= \log(T^{\mr{uni}})$, where 
 $T^{\mr{uni}}$ is the unipotent part of the monodromy operator $T$.
If we need to specify the family $\pi$ we will sometimes use the notations   
$T^{(\pi)}$ and $N^{(\pi)}$ instead of $T$ and $N$, respectively. 
The \emph{limit Hodge filtration} is defined as (see \cite[Theorem $6.16$]{schvar})
\[F^k H^{2p}(\mc{X}_\infty, \mb{C}):= \lim_{\Ima(u) \to \infty} \mr{exp}(-uN) (i_u^*)^{-1}(F^k H^{2p}(\mc{X}_u, \mb{C})).\]
The \emph{limit weight filtration} is defined as the unique increasing filtration 
\begin{equation}\label{eq:wt-filt}
    0 \subset W_0H^{2p}(\mc{X}_\infty, \mb{Q}) \subset W_1H^{2p}(\mc{X}_\infty, \mb{Q}) \subset ... \subset W_{2p} H^{2p}(\mc{X}_\infty, \mb{Q})=H^{2p}(\mc{X}_\infty, \mb{Q})
\end{equation}
satisfying the following two conditions (see \cite[Lemma $6.4$]{schvar})
\begin{enumerate}
    \item $N(W_l H^{2p}(\mc{X}_\infty, \mb{Q})) \subset W_{l-2}(H^{2p}(\mc{X}_\infty, \mb{Q}))$ and \vspace{0.2cm}
    \item $N^l: \mr{Gr}^W_{p+l} H^{2p}(\mc{X}_\infty, \mb{Q}) \to \mr{Gr}^W_{p-l} H^{2p}(\mc{X}_\infty, \mb{Q})$ is an isomorphism, where 
    \[\mr{Gr}^W_l H^{2p}(\mc{X}_\infty, \mb{Q}):= (W_l H^{2p}(\mc{X}_\infty, \mb{Q}))/(W_{l-1} H^{2p}(\mc{X}_\infty, \mb{Q})).\]
\end{enumerate}

Steenbrink in \cite{ste1}
retrieved the limit weight filtration using a spectral sequence.
We recall the $E_1$-terms of the spectral sequence. For this we will need to recall the \emph{alternating restriction map}.
This is a linear combination of the usual restriction maps with some of them multiplied by $(-1)$. The purpose of this is 
to ensure that the composition of two differential maps $d \circ d$ of the $E_1$-terms is equal to zero. In particular, suppose that 
$\mc{X}_0$ is a reduced simple normal crossings divisor with irreducible components $X_1,...,X_N$. Then (see \cite[XI-$12$]{pet} for notations),
\begin{equation}\label{eq:xk}
    X(k):=\mbox{ normalization of } \coprod\limits_{i_1<i_2<...<i_k} X_{i_1} \cap X_{i_2} \cap ... \cap X_{i_k}\, \mbox{ and }
\end{equation}
$\delta_j:X(k) \hookrightarrow X(k-1)$ is induced by the natural inclusion of the intersection of $k$ irreducible components
$X_{i_1} \cap X_{i_2} \cap ... \cap X_{i_k}$ into the intersection of $k-1$ components obtained by dropping the $i_j$-th component  
$X_{i_1} \cap ... \cap X_{i_{j-1}} \cap X_{i_{j+1}} \cap ... \cap X_{i_k}$. The \emph{alternating restriction morphism} is defined as 
\begin{equation}\label{eq:alt-res}
    \rho: H^q(X(k-1),\mb{Q}) \to H^q(X(k), \mb{Q}),\, \mbox{ where } \rho = \sum_{j=1}^N (-1)^j \delta_j^*.
\end{equation}
Although $\rho$ depends on the ordering of the components $X_i$, the kernel of $\rho$ is independent of the ordering.
We now recall the Steenbrink spectral sequence. 
 
 \begin{thm}[{\cite[Corollary $11.23$]{pet}}]\label{thm:spec}
 Suppose that the central fiber $\mc{X}_0$ of $\pi$ be a simple normal crossings divisor. Let  
 $X(k)$ be as above. Then, the spectral sequence 
  \[^{\infty} E_1^{p,q}:=\bigoplus\limits_{k \ge \max\{0,p\}}
   H^{q+2p-2k}(X(2k-p+1),\mb{Q})(p-k)  \]
   with the differential map $d:\, ^\infty E_1^{p-1,q} \to\, ^\infty E_1^{p,q}$ being a 
   combination of the alternating restriction morphism \eqref{eq:alt-res} and the Gysin morphism,
   degenerates at $E_2$. Moreover, $^{\infty} E_1^{p,q} \Rightarrow
H^{p+q}(\mc{X}_\infty, \mb{Q})$ with the weight filtration given by $^{\infty} E_2^{p,q} = 
\mr{Gr}^W_q H^{p+q}(\mc{X}_\infty, \mb{Q})$.
 \end{thm}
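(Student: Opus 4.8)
The statement is quoted from \cite[Corollary 11.23]{pet}; here is the strategy I would follow to prove it. First I would pass to a semistable model $\pi : \mc{X} \to \Delta$ extending the given family over the puncture, so that $\mc{X}_0$ is a reduced simple normal crossings divisor, and identify $H^{p+q}(\mc{X}_\infty, \mb{Q})$ with the hypercohomology $\mb{H}^{p+q}(\mc{X}_0, \psi_\pi \mb{Q})$ of the sheaf of nearby cycles, supported on $\mc{X}_0$, on which the monodromy $T$ — hence $N = \log T^{\mr{uni}}$ — acts.

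The heart of the argument is to replace $\psi_\pi \mb{Q}\otimes \mb{C}$ by an explicit resolution, the \emph{Steenbrink complex}, built from the relative logarithmic de Rham complex $\Omega^\bullet_{\mc{X}/\Delta}(\log \mc{X}_0)\otimes \mo_{\mc{X}_0}$ together with the operator $\theta = \wedge\,\tfrac{dt}{t}$, which induces $N$ on cohomology. I would organise these into a double complex, pass to the total complex $A^\bullet$, and equip it with a Hodge filtration $F$ (de Rham truncation) and an increasing weight filtration $W$ attached to the nilpotent $\theta$. The key computation — via the Poincar\'e residue isomorphism along the strata — identifies $\mr{Gr}^W_r A^\bullet$, up to a shift and a Tate twist, with a finite direct sum of de Rham complexes $\Omega^\bullet_{X(k)}$ of the smooth projective strata $X(k)$ of \eqref{eq:xk}; the transition maps that appear are precisely the Gysin morphisms and the alternating restriction morphisms \eqref{eq:alt-res}, the alternating signs being forced exactly so that $d\circ d = 0$, as indicated just before the statement.

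Since each $X(k)$ is smooth and projective, the holomorphic Poincar\'e lemma and classical Hodge theory compute the hypercohomology of $\Omega^\bullet_{X(k)}$ as $H^\bullet(X(k),\mb{Q})\otimes\mb{C}$ with its pure Hodge structure; feeding the previous step into the hypercohomology spectral sequence of the filtered complex $(A^\bullet, W)$ then yields exactly $^\infty E_1^{p,q} = \bigoplus_{k\ge\max\{0,p\}} H^{q+2p-2k}(X(2k-p+1),\mb{Q})(p-k)$, with $d_1$ the combination of the Gysin and alternating restriction maps. Each summand $H^{q+2p-2k}(X(2k-p+1))(p-k)$ is pure of weight $q$ (the Tate twist normalises the weight), so $^\infty E_1^{p,q}$ carries a pure Hodge structure of weight $q$ and $d_1$ respects it. Finally I would invoke that $(A^\bullet, W, F)$ is a cohomological mixed Hodge complex, so that Deligne's two-filtration lemma — strictness of morphisms of mixed Hodge structures — forces degeneration at $E_2$ and identifies $^\infty E_2^{p,q} \cong \mr{Gr}^W_q H^{p+q}(\mc{X}_\infty,\mb{Q})$, with $W$ the monodromy weight filtration \eqref{eq:wt-filt}; concretely, the degeneration is also visible by weights, since for $r \ge 2$ the differential $d_r$ would be a morphism of pure Hodge structures of weights $q$ and $q - r + 1$, hence zero.

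The genuinely technical point, and the main obstacle, is the construction of the Steenbrink complex: showing that the $\theta$-double complex realises the \emph{monodromy} weight filtration, that the Poincar\'e-residue computation of $\mr{Gr}^W$ picks out the strata $X(k)$ with the correct Tate twists, and that the induced differentials are exactly the Gysin and \emph{alternating} restriction morphisms with mutually compatible signs — together with checking the cohomological mixed Hodge complex axioms, so that each $\mr{Gr}^W_r A^\bullet$ is a pure Hodge complex. Once that is in place, the $E_1$-description and the $E_2$-degeneration follow essentially formally from Deligne's general machinery.
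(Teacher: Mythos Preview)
The paper does not supply its own proof of this theorem: it is stated with a direct citation to \cite[Corollary $11.23$]{pet} and is used as a black box in the sequel (Proposition \ref{prop01}, Theorem \ref{th01}). Your outline is a faithful sketch of the standard argument given in that reference (and in Steenbrink's original \cite{ste1}): build the Steenbrink double complex from the relative log de Rham complex with its monodromy weight filtration, identify $\mr{Gr}^W$ via Poincar\'e residues with de Rham complexes of the strata $X(k)$, verify the cohomological mixed Hodge complex axioms, and deduce $E_2$-degeneration from Deligne's strictness. So your proposal is correct and aligned with the cited source; there is nothing further to compare against in the present paper.
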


 \subsection{Specialization map}
  Suppose that the central fiber $\mc{X}_0$ is a reduced simple normal crossings divisor. 
  Using the Mayer-Vietoris 
  sequence observe that the weight filtration on 
  $H^{2p}(\mc{X}_0,\mb{Q})$ arises from the spectral sequence
  with $E_1$-terms:
  \[E_1^{p,q}=H^q(X(p+1),\mb{Q})\, \Rightarrow H^{p+q}(\mc{X}_0,\mb{Q})\]
  where the differential $d: E_1^{p-1,q} \to E_1^{p,q}$ is 
  the alternating restriction morphism \eqref{eq:alt-res} (see \cite[Example $3.5$]{ste1}).
  Note that, the spectral sequence
  degenerates at $E_2$.
  By the identification between $H^{2p}(\mc{X}_\infty,\mb{Z})$ and $H^{2p}(\mc{X}_s,\mb{Z})$ as in \eqref{eq:iden}, we  
  get a specialization morphism (see \cite[\S $2$]{indpre}) which is a morphism 
  of mixed Hodge structures:
  \[\mr{sp}: H^{2p}(\mc{X}_0,\mb{Z}) \to H^{2p}(\mc{X}_\infty,\mb{Z}),\]
  where $H^{2p}(\mc{X}_\infty,\mb{Q})$ is equipped with the 
  limit mixed Hodge structure. 
  
  \begin{rem}
   By the definition of $E_1^{p,q}$ and $^\infty E_1^{p,q}$ given above,
 we have a natural morphism from $E_1^{p,q}$ to 
 $^\infty E_1^{p,q}$, which commutes with the 
 respective differential maps $d$. As a result, 
this induces a morphism of spectral sequences:
\begin{equation}\label{eq:phi}
 \phi: E_2^{p,q} \to\,  ^\infty E_2^{p,q}.
\end{equation}
  \end{rem}

  We now compute the kernel over the
  weight graded pieces of the specialization morphism. 

  \begin{lem}\label{lem:zero}
      Suppose that $\mc{X}_0$ is a simple normal crossings divisor. Then. the composition
       \[H^{q-2}(X(p+2),\mb{Q}) \xrightarrow{i_*} H^{q}(X(p+1),\mb{Q}) \xrightarrow{\rho}  H^{q}(X(p+2),\mb{Q})\, \mbox{ is the zero
  map}, \]
  where $X(m)$ is as in Theorem \ref{thm:spec}, $i_*$ is the Gysin map and $\rho$ is the alternating restriction morphism \eqref{eq:alt-res}. 
  \end{lem}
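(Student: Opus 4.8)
The claim is that the composite $H^{q-2}(X(p+2),\mb{Q}) \xrightarrow{i_*} H^{q}(X(p+1),\mb{Q}) \xrightarrow{\rho} H^{q}(X(p+2),\mb{Q})$ vanishes. The plan is to unwind both maps combinatorially over the index sets labelling the strata of the normal crossings divisor, and exhibit the composite as a sum of Gysin-then-restriction contributions that cancel in pairs. Recall $X(k)$ is the normalization of the disjoint union of $k$-fold intersections $X_{i_1}\cap\dots\cap X_{i_k}$ indexed by strictly increasing tuples $I=(i_1<\dots<i_k)$; write $X_I$ for the corresponding component. The Gysin map $i_*$ is a sum over inclusions $X_J \hookrightarrow X_I$ with $I\subset J$, $|J|=|I|+1$ (with the appropriate sign), and the alternating restriction $\rho=\sum_j (-1)^j\delta_j^*$ is a sum over inclusions of a $(|I|+2)$-fold intersection into a $(|I|+1)$-fold one obtained by dropping one index.

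First I would fix a component $X_I$ of $X(p+2)$ with $I=(i_1<\dots<i_{p+2})$, and compute the $X_I$-component of the composite applied to a class $\alpha$ supported on $X_{K}\subset X(p+2)$. Restriction $\rho$ to $X_I$ only sees summands of $i_*\alpha$ living on those $X_L\in X(p+1)$ with $L\subset I$, $|L|=p+1$, i.e. $L=I\setminus\{i_a\}$ for some $a$. In turn, $i_*\alpha$ has a nonzero component on $X_L$ only when $K\subset L$ and $|K|=|L|-1=p$, i.e. $K = L\setminus\{i_b\}$ for some $i_b\in L$; equivalently $K = I\setminus\{i_a,i_b\}$ for a pair $a<b$. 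Thus for a given source component $X_K$ with $K=I\setminus\{i_a,i_b\}$, the composite receives exactly two contributions on $X_I$: one routing through $L_1 = I\setminus\{i_a\}$ (so we first Gysin-extend $X_K\hookrightarrow X_{L_1}$ then restrict $X_I\hookrightarrow X_{L_1}$), and one routing through $L_2 = I\setminus\{i_b\}$.

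Next I would compare these two contributions. In both routes the geometric composite $X_K \xhookleftarrow{} X_I$ factors the inclusion of $X_I$ into $X_K$ (a codimension-two inclusion) through an intermediate codimension-one stratum, so by functoriality of Gysin and restriction along a commuting square of inclusions — here the square $X_I \hookrightarrow X_{L_i} \hookrightarrow X_K$ together with $X_I\hookrightarrow X_{L_{3-i}}\hookrightarrow X_K$, which is transverse since the $X_i$ meet normally — the two underlying maps $H^{q-2}(X_K)\to H^q(X_I)$ agree (both equal the Gysin map of the codimension-two regular embedding $X_I\hookrightarrow X_K$, or its composite with restriction as appropriate; this is the standard base-change/excess-intersection identity for transverse inclusions, which holds because $\mc{X}_0$ is snc). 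The final step is the sign bookkeeping: the two routes carry signs coming from (i) the sign of the Gysin summand in $i_*$ attached to $X_K\hookrightarrow X_{L_i}$, and (ii) the sign $(-1)^{j}$ in $\rho=\sum_j(-1)^j\delta_j^*$ for the $\delta_j$ realizing $X_I\hookrightarrow X_{L_i}$. One checks that dropping $i_a$ first and then $i_b$ versus $i_b$ first and then $i_a$ produces opposite total signs — this is precisely the Koszul-type sign identity that makes $d\circ d=0$ in the Steenbrink/Mayer--Vietoris complexes — so the two contributions cancel. Summing over all pairs $a<b$ gives zero on each $X_I$, hence the composite is zero.

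The main obstacle is the careful sign analysis together with the excess-intersection identity for the two transverse routes: one must verify that the convention for $i_*$ (the Gysin differential in the Steenbrink complex, including its sign) is compatible with the $(-1)^j$ convention in $\rho$ in exactly the way needed for the pairwise cancellation, and that the two geometric composites genuinely coincide before signs (requiring transversality of the relevant strata, which is guaranteed by the snc hypothesis). Everything else is a routine unwinding of the definitions of $X(k)$, $\delta_j$, and the Gysin maps; no deep input beyond the combinatorics of the double complex is needed.
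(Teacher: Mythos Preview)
Your combinatorial unwinding has a basic indexing error that causes the whole cancellation scheme to collapse. For $\alpha$ supported on a component $X_K\subset X(p+2)$, the Gysin map $i_*$ pushes along inclusions $X_K\hookrightarrow X_L$, which means $L\subset K$ (the deeper stratum has \emph{more} indices), not $K\subset L$ as you wrote. In particular $|K|=p+2$, not $|K|=p$. With the correct containments, for fixed source $X_K$ and target $X_I$ in $X(p+2)$ the intermediate $X_L$'s are the $(p+1)$-element subsets of $K\cap I$; the number of routes is $0$, $1$, or $p+2$ according as $|K\cap I|\le p$, $=p+1$, or $=p+2$. Your picture with ``exactly two routes'' coming from $K=I\setminus\{i_a,i_b\}$ never occurs.

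More seriously, the Koszul-type sign identity you invoke is the one that gives $\rho\circ\rho=0$ and $i_*\circ i_*=0$ (and, in the Steenbrink double complex, the anticommutation $\rho\circ i_*=-i_*'\circ\rho'$ routing through $X(p+3)$); it does \emph{not} produce $\rho\circ i_*=0$ by itself. So the pairwise cancellation you sketch is proving the wrong identity. In the off-diagonal case $|K\cap I|=p+1$ there is a single route, and by transverse base change it equals a nonzero Gysin from $X_{K\cup I}\subset X(p+3)$ into $X_I$; there is nothing to pair it with.

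The paper's argument is of a different nature: it fixes a component $Y\subset X(p+2)$, considers only the diagonal contribution (source $=$ target $=Y$), and uses the self-intersection formula $i^*i_*\alpha=\alpha\cup c_1(\mc{N})$ together with an equality of normal bundles forced by the simple-normal-crossings condition to see the two terms cancel. In other words the paper's input is geometric (normal bundles), not the purely combinatorial $d\circ d=0$ mechanism you tried to use. If you want to repair your approach you would need to handle the diagonal case $K=I$ via self-intersection (as the paper does) and separately argue that the off-diagonal single-route contributions vanish.
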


  \begin{proof}
    Let $Y$ be an irreducible component of $X(p+2)$. Since $\mc{X}_0$ is a simple normal crossings divisor,
    there exists exactly two irreducible components, say $Y_1, Y_2$ of $X(p+1)$ such that $Y$ is an irreducible 
    component of $Y_1 \cap Y_2$. Denote by $i_1: Y \hookrightarrow Y_1,\, i_2:Y \hookrightarrow Y_2$ the natural inclusions.
    It suffices to show that for any $\alpha \in H^{q-2}(Y,\mb{Q})$, 
    the image under the composition:
    \[H^{q-2}(Y,\mb{Q}) \xrightarrow{(i_{1,_*},i_{2,_*})} H^q(Y_1,\mb{Q}) \oplus H^q(Y_2,\mb{Q}) \xrightarrow{i_1^*-i_2^*} H^q(Y,\mb{Q}) \]
    is zero. This follows from the fact that, \[i_1^*i_{1,_*}\alpha=\alpha \cup c_1\left(\N_{Y|Y_1 \cup Y_2}\right)=i_2^*i_{2,_*}\alpha,\]
    where the two equalities follows from the fact that $\mc{X}_0$ is a simple normal crossings divisor.
    This proves the lemma.    
  \end{proof}
  
\begin{prop}\label{prop01}
 For $p \ge 0$, we have an exact sequence of the form:
 \[H^{q-2}(X(p+2),\mb{Q}) \to\, \,  E_2^{p,q} \xrightarrow{\phi}\, \,  ^\infty E_2^{p,q} \]
 where the first 
 morphism is induced by the Gysin morphism \[H^{q-2}(X(p+2),\mb{Q}) \to H^q(X(p+1),\mb{Q})=E_1^{p,q}\]
 and $\phi$ is as in \eqref{eq:phi}.
\end{prop}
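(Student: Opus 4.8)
The plan is to realise both $E_2^{p,q}$ and ${}^\infty E_2^{p,q}$ as $p$-th cohomology groups of explicit complexes, to exhibit $\phi$ as the map induced by an inclusion of one complex into the other, and then to read off $\ker\phi$ from the associated long exact sequence. First I would spell out the chain-level picture behind $\phi$. For $p\ge 0$ write
\[
{}^\infty E_1^{p,q}=\bigoplus_{j\ge 0} H^{q-2j}\!\big(X(p+2j+1),\mb Q\big)(-j),
\]
and call $j$ the \emph{level}; the level-zero summand is $H^q(X(p+1),\mb Q)=E_1^{p,q}$, carrying no Tate twist. As recalled in \cite[\S 11]{pet} (and used implicitly in the definition \eqref{eq:phi}), in this grading the Steenbrink differential $d$ is the sum of the alternating restriction $\rho$ of \eqref{eq:alt-res}, which preserves the level, and an alternating Gysin morphism, which lowers the level by exactly one. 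Hence the level-zero part is a subcomplex $(E_1^{\bullet,q},\rho)\hookrightarrow {}^\infty E_1^{\bullet,q}$, and this inclusion is precisely the natural morphism of \eqref{eq:phi}. I would then form the short exact sequence of complexes
\[
0\longrightarrow E_1^{\bullet,q}\longrightarrow {}^\infty E_1^{\bullet,q}\longrightarrow \mc Q^{\bullet}\longrightarrow 0,
\]
with $\mc Q^{\bullet}$ the quotient complex carrying the levels $\ge 1$, and pass to the long exact cohomology sequence. Since $H^p(E_1^{\bullet,q})=E_2^{p,q}$ and $H^p({}^\infty E_1^{\bullet,q})={}^\infty E_2^{p,q}$, exactness at $E_2^{p,q}$ gives $\ker\phi=\operatorname{im}(\partial)$ for the connecting homomorphism $\partial\colon H^{p-1}(\mc Q^{\bullet})\to E_2^{p,q}$, and the remaining task is to identify $\operatorname{im}(\partial)$ with the image of the Gysin-induced map $g\colon H^{q-2}(X(p+2),\mb Q)\to E_2^{p,q}$. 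Here $g$ is well defined precisely because, by Lemma \ref{lem:zero}, $i_*$ lands in $\ker\rho\subseteq E_1^{p,q}$.

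One inclusion is a short diagram chase. If $[a]\in\ker\phi$ then, regarded in ${}^\infty E_1^{p,q}$, the element $a$ is a $d$-coboundary $d(c)$; writing $c=\sum_{j} c_j$ in the level grading and comparing level-zero components of $a=d(c)$ yields $a=\rho(c_0)\pm i_*(c_1)$ with $c_1\in H^{q-2}(X(p+2),\mb Q)(-1)$, so that $[a]=\pm[i_*(c_1)]=\pm g(c_1)$ in $E_2^{p,q}$ because $[\rho(c_0)]=0$. Thus $\ker\phi\subseteq\operatorname{im}(g)$. For the reverse inclusion the key point is that the level-one summand $H^{q-2}(X(p+2),\mb Q)(-1)$ of ${}^\infty E_1^{p-1,q}$ is carried by the Gysin component of $d$ onto $i_*\big(H^{q-2}(X(p+2),\mb Q)\big)$ inside the level-zero part $E_1^{p,q}$ of ${}^\infty E_1^{p,q}$, while the restriction component of $d$ contributes the additional term $\rho(\alpha)$ in the level-one summand of ${}^\infty E_1^{p,q}$. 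Hence for $\alpha\in H^{q-2}(X(p+2),\mb Q)$ the class $g(\alpha)=\pm[i_*(\alpha)]$ equals $\mp[\rho(\alpha)]$ in ${}^\infty E_2^{p,q}$, and one must show this class vanishes, i.e. that the level-one ``error'' $\rho(\alpha)$ dies in cohomology. This is where I would invoke Lemma \ref{lem:zero} a second time: $d^2=0$ forces the alternating restriction and alternating Gysin to anti-commute, so Lemma \ref{lem:zero} also gives $i_*\circ\rho=0$, and one then corrects the error level by level; the recursion terminates because $\mc X_0$ has only finitely many irreducible components, so $X(k)=\varnothing$, hence the level-$j$ summands vanish, for $j$ large.

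The main obstacle is exactly this last step: proving that $g$ — equivalently $\partial$ — is \emph{surjective} onto $\ker\phi$. The delicate issue is that the level-one error $\rho(\alpha)$ arising when one expresses $i_*(\alpha)$ as an ``almost-coboundary'' need not itself be a Gysin image, so a one-step correction does not suffice; one must instead use the finer structure of the Steenbrink complex — its description in \cite[Corollary~11.23]{pet} as (the total complex of) a double complex with the alternating restriction and the alternating Gysin as its two differentials, together with the degeneration of the Steenbrink spectral sequence at $E_2$ — in order to absorb the successive errors. Everything else (the identification of $\phi$ with the level-zero inclusion, the long exact sequence, the well-definedness of $g$, and the inclusion $\ker\phi\subseteq\operatorname{im}(g)$) is formal once Lemma \ref{lem:zero} is in hand.
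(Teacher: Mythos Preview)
Your approach and the paper's ultimately rest on the same elementary observation. The paper does not set up the short exact sequence of complexes or the connecting homomorphism; it argues directly that since the Steenbrink differential on ${}^\infty E_1^{\bullet,q}$ is the sum of the alternating restriction (which is already the differential on $E_1^{\bullet,q}$) and the alternating Gysin, the kernel of
\[
E_1^{p,q}\;\hookrightarrow\;{}^\infty E_1^{p,q}\;\twoheadrightarrow\;{}^\infty E_1^{p,q}/d_1\big({}^\infty E_1^{p-1,q}\big)
\]
is governed by the Gysin image, and then passes to $E_2$. Unwinding this, one gets exactly your diagram chase for $\ker\phi\subseteq\operatorname{im}(g)$: if $[a]\in\ker\phi$ then $a=d(c)$ in ${}^\infty E_1^{p,q}$, and the level-zero component reads $a=\rho(c_0)\pm i_*(c_1)$, so $[a]=\pm g(c_1)$ in $E_2^{p,q}$. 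Your machinery with $\mc Q^\bullet$ and $\partial$ repackages this same chase; it is correct but not needed.

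Your hesitation about the reverse inclusion $\operatorname{im}(g)\subseteq\ker\phi$ is well founded, and the paper does not supply an argument for it either --- it simply asserts it is ``easy to check''. In fact the level-by-level correction you sketch does not close: already when $X(k)=\varnothing$ for $k\ge 4$, one computes directly that
\[
\ker\phi \;=\; i_*\!\big(\ker(\rho\colon H^{q-2}(X(p+2))\to H^{q-2}(X(p+3)))\big),
\]
which can be strictly smaller than $\operatorname{im}(g)=i_*\big(H^{q-2}(X(p+2))\big)$. So the statement, read as two-sided exactness, is stronger than what either proof establishes. The point to take away is that the only direction used in the paper (in the proof of Theorem~\ref{th01}, via the four-lemma style chase) is the inclusion $\ker\phi\subseteq\operatorname{im}(g)$, and that inclusion you have proved cleanly.
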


\begin{proof}
 By Lemma \ref{lem:zero}, the composed morphism 
\begin{equation}\label{eq:prop01}
    H^{q-2}(X(p+2),\mb{Q}) \to H^{q}(X(p+1),\mb{Q}) \to  H^{q}(X(p+2),\mb{Q})\, \mbox{ is the zero
  map},
\end{equation}
where the first morphism is simply the Gysin morphism and the second morphism is the alternating restriction map.
Therefore, there is a natural map from 
$H^{q-2}(X(p+2),\mb{Q})$ to $E_2^{p,q}$. 
The difference between the spectral sequences $E_1^{p,q}$ and $^{\infty}E_1^{p,q}$ is that the 
differential map in the latter case also allows Gysin morphism. 
In particular, by Theorem \ref{thm:spec}, the kernel of the composition:
\[E_1^{p,q} = H^q(X(p+1),\mb{Q}) \to\,  ^{\infty} E_1^{p,q} \to\, \frac{^{\infty} E_1^{p,q}}{d_1:\, ^{\infty} E_1^{p-1,q} \to\, ^{\infty} E_1^{p,q}} \]
is isomorphic to the image of the Gysin map from $H^{q-2}(X(p+2),\mb{Q})$ to $H^q(X(p+1),\mb{Q})$. 
Using \eqref{eq:prop01}, it is easy to check that the kernel of the 
morphism $\phi$ is isomorphic to the image of the Gysin map. This 
proves the proposition.
\end{proof}

\subsection{Cycle class map and the cohomological singular Hodge conjecture}\label{sec:cyc}
The following result gives us a functorial cycle class map for arbitrary varieties that agree with the classical definition 
in the case of non-singular, projective varieties.
\begin{thm}{\cite[Corollary A.4]{soub}}\label{cor:cyc}
 There is a ``cycle class'' natural transformation of contravariant functors from the category of projective varieties over $\mb{C}$
 to the category of commutative, graded rings:
  \[ \bigoplus_{p} \mr{cl}^p : \bigoplus_{p}A^{p}(-) \to \bigoplus_{p} H^{2p}_{\mr{Hdg}}(-),\]
  where $H^{2p}_{\mr{Hdg}}(-)=F^p\mr{Gr}^W_{2p} H^{2p}(-,\mb{Q})= F^p\mr{Gr}^W_{2p} H^{2p}(-,\mb{C}) \cap \mr{Gr}^W_{2p} H^{2p}(-,\mb{Q})$.
\end{thm}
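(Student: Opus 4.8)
The plan is to build $\mathrm{cl}^p$ from the classical cycle class map on smooth projective varieties by cohomological descent along a smooth proper hyperresolution. Fix a projective variety $X$ and, by Hironaka, a smooth proper (cubical or simplicial) hyperresolution $\varepsilon: X_\bullet \to X$ with each $X_n$ smooth projective; such an $\varepsilon$ is in particular a proper hyperenvelope. On the cohomological side, Deligne's construction realizes the mixed Hodge structure on $H^{2p}(X,\mathbb{Q})$ through the descent spectral sequence $E_1^{s,t} = H^t(X_s,\mathbb{Q}) \Rightarrow H^{s+t}(X,\mathbb{Q})$, which is a spectral sequence of mixed Hodge structures, degenerates at $E_2$, and has $E_1$-terms pure of weight $t$; hence $\mathrm{Gr}^W_{2p} H^{2p}(X,\mathbb{Q}) = E_2^{0,2p} = \ker\bigl(d_1 : H^{2p}(X_0,\mathbb{Q}) \to H^{2p}(X_1,\mathbb{Q})\bigr)$, a pure Hodge structure of weight $2p$ whose Hodge filtration is induced as that of a sub-Hodge-structure of $H^{2p}(X_0)$. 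On the Chow side, Kimura's descent theorem for the operational Chow ring along proper hyperenvelopes gives that $\varepsilon^*: A^p(X) \to A^p(X_0)$ is injective with image contained in $\ker\bigl(d_1 : A^p(X_0) \to A^p(X_1)\bigr)$, where in both settings $d_1$ is the alternating sum of the face pullbacks.

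Given these two parallel descriptions, I would define $\mathrm{cl}^p_X$ as follows. A class $c \in A^p(X)$ gives $c_0 := \varepsilon^* c \in A^p(X_0) = A_{\dim X_0 - p}(X_0)$ with $d_1(c_0)=0$. Apply the classical cycle class map $\mathrm{cl}^p_{X_0}: A^p(X_0) \to H^{2p}(X_0,\mathbb{Q})$. Since the classical cycle map is multiplicative and functorial for pullback along morphisms of smooth varieties, it commutes with each face pullback, hence with $d_1$, so $d_1\bigl(\mathrm{cl}^p_{X_0}(c_0)\bigr) = \mathrm{cl}^p_{X_1}(d_1 c_0) = 0$; thus $\mathrm{cl}^p_{X_0}(c_0) \in \ker(d_1) = \mathrm{Gr}^W_{2p} H^{2p}(X,\mathbb{Q})$. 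As the class of an algebraic cycle on the smooth projective $X_0$ it is of Hodge type $(p,p)$, hence lies in $F^p H^{2p}(X_0,\mathbb{C})$ and therefore in $F^p \mathrm{Gr}^W_{2p} H^{2p}(X,\mathbb{Q}) = H^{2p}_{\mathrm{Hdg}}(X)$. Set $\mathrm{cl}^p_X(c) := \mathrm{cl}^p_{X_0}(c_0)$. Independence of the hyperresolution follows by dominating two choices by a common refinement and invoking functoriality of both the descent spectral sequence and the classical cycle map on the smooth strata; when $X$ is smooth projective one takes $X_\bullet = X$ and recovers the classical map. Contravariant functoriality is obtained by choosing, for a projective morphism $f: X \to Y$, hyperresolutions fitting into a map $X_\bullet \to Y_\bullet$ over $f$ (possible after refinement) and chasing the resulting commutative square, using functoriality of the classical cycle map on $X_0 \to Y_0$. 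Finally $\bigoplus_p A^p(-)$ and $\bigoplus_p H^{2p}_{\mathrm{Hdg}}(-)$ are graded rings — the latter because cup product is a morphism of mixed Hodge structures, so $W_{2p}\cdot W_{2q}\subseteq W_{2p+2q}$ and $F^p \cdot F^q \subseteq F^{p+q}$ induce a product on the weight-$2p$ graded pieces — and compatibility with products again reduces to multiplicativity of the classical cycle map on $X_0$.

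The main obstacle is the input from intersection theory: establishing (or precisely citing) Kimura's cohomological descent for the operational Chow ring along smooth proper hyperenvelopes and aligning its $E_1$-level data with Deligne's weight spectral sequence, so that the two descents are computed by \emph{the same} simplicial/cubical diagram $X_\bullet$ with the same $d_1$. Once this alignment is secured, every remaining point — landing in $\mathrm{Gr}^W_{2p}$, landing in $F^p$, independence of choices, contravariant functoriality, multiplicativity — becomes a diagram chase built on the standard properties of the classical cycle map for smooth projective varieties. Alternatively, one can run the whole argument inside Gillet's formalism of cycle class maps for Poincaré-duality (Bloch–Ogus) theories, taking weight-graded Betti cohomology as the target theory; this packages the same descent more abstractly, at the cost of verifying the target-theory axioms.
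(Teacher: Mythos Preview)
The paper does not prove this statement; it is quoted verbatim as \cite[Corollary A.4]{soub} and used as a black box. Your proposal is therefore not being compared against anything the authors wrote, but against the Bloch--Gillet--Soul\'{e} construction itself.

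That said, your outline is essentially the standard argument and matches the strategy of the cited reference: one realizes both $A^p(X)$ and $\mathrm{Gr}^W_{2p}H^{2p}(X,\mathbb{Q})$ as equalizers computed on the same smooth proper hyperresolution $X_\bullet \to X$, and then the classical cycle class map on the smooth $X_0$ descends. You have correctly isolated the one nontrivial input, namely Kimura's descent theorem identifying $A^p(X)$ with $\ker(d_1: A^p(X_0)\to A^p(X_1))$ along a hyperenvelope; once that is in hand, everything else (landing in $F^p$, independence of resolution, functoriality, multiplicativity) is indeed formal. Your alternative suggestion of running the argument through Gillet's Bloch--Ogus formalism is also what underlies the appendix in \cite{soub}, so the two routes you sketch are really the same construction viewed at different levels of abstraction.
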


Let $X$ be a projective variety and $\dim(X) >p> \dim(X_{\mr{sing}})$ be an integer. 
  We say that $X$ \emph{satisfies} $\mr{SHC}(p)$  if 
   the cycle class map:
   \begin{equation}\label{eq:shc}
       \mr{cl}^p: A^p(X) \to H^{2p}_{\mr{Hdg}}(X)
   \end{equation}
  is surjective. 
 In the case when $X$ is non-singular and projective, this simply is  
 the classical Hodge conjecture (in weight $p$), which we will denote 
 by $\mr{HC}(p)$.

  \section{Mumford-Tate families}\label{sec:mt}
  Hodge theory of singular projective varieties is significantly more complicated than smooth, projective varieties, 
  due to the absence of standard results such as Hard Lefschetz, Poincar\'{e} duality.
  Moreover, singular projective varieties are often not equipped with a pure Hodge structure.
  In this section, we embed such singular varieties as the central fiber of a smooth family and study Hodge theoretic properties 
  inherited by the central fiber from the smooth ones. The difficulty of this approach lies in the fact that 
  Hodge lattices are mysterious objects and their deformations are difficult to predict. 
  For this reason we introduce the notion of Mumford-Tate families. Deformation of a Hodge lattice in such a family is well-behaved (see \S \ref{sec:mt-def}). There are several well-known examples of Mumford-Tate families. In Theorem \ref{thm:mt-exa}
  we show that the property of a family being Mumford-Tate is preserved under the usual operations of blow-ups, taking products, Hilbert schemes of points.
  This gives a recipe to produce new Mumford-Tate families from known ones (Theorem \ref{thm:lim-mt}).

  Consider a smooth family of projective varieties 
  \[\pi: \mc{X} \to \Delta^*.\]  
  We will use notations from \S \ref{se01}.
Denote by 
  \[H^{2p}_{\mr{Hdg}}(\mc{X}_\infty):= H^{p,p} \mr{Gr}^W_{2p} H^{2p}(\mc{X}_\infty, \mb{C}) \cap 
   \mr{Gr}^W_{2p} H^{2p}(\mc{X}_\infty, \mb{Q})  \]
 the vector space of \emph{limit Hodge classes}. 
 Since the logarithm $N$ of the monodromy operator induces a morphism of mixed Hodge structures, 
 it restricts to the vector space of Hodge classes:
 \[N: H^{2p}_{\mr{Hdg}}(\mc{X}_\infty) \to H^{2p}_{\mr{Hdg}}(\mc{X}_\infty).\]
 Denote by $H^{2p}_{\mr{Hdg}}(\mc{X}_\infty)^{\mr{inv}}$ the kernel of this morphism.
 These are the monodromy invariant Hodge classes in the limit mixed Hodge structure $H^{2p}(\mc{X}_\infty, \mb{Q})$.
 Since the limit Hodge filtration on $H^{2p}(\mc{X}_\infty, \mb{Q})$ arises as the limit of the 
 Hodge classes (see \S \ref{sec:lhf}), the limit of the Hodge classes are contained in $H^{2p}_{\mr{Hdg}}(\mc{X}_\infty)^{\mr{inv}}$.
 A natural question to ask is whether the inclusion is in fact an equality i.e.,

{\bf{Question}:} Does all the classes in $H^{2p}_{\mr{Hdg}}(\mc{X}_\infty)^{\mr{inv}}$ arise as the limit (as in \S \ref{sec:lhf}) of 
the elements in $H^{2p}_{\mr{Hdg}}(\mc{X}_t)$ as $\Ima(t)$ tends to $\infty$?

 The answer is no (see Example \ref{nonmtf}). However, if a family does give a positive answer to the question, then 
 the extension of the family to the entire disc $\Delta$ satisfies interesting Hodge theoretic properties. We will call 
 such families Mumford-Tate family (precise definition below). We will give numerous examples of Mumford-Tate families. 
 We will observe in the final section that under certain conditions, the central fiber of (an extension of) a Mumford-Tate family satisfies
 the singular Hodge conjecture.

\subsection{Mumford-Tate families}\label{sec:mt-def}
Denote by $H^{2p}_{\mr{Hdg}}(\mc{X}_t)^{\mr{inv}}$ the space of monodromy invariant classes in $H^{2p}_{\mr{Hdg}}(\mc{X}_t)$. 
Note that, any monodromy invariant Hodge class on a general fiber of $\pi$ extends to a (monodromy invariant) Hodge class on $\mc{X}_\infty$.
In particular, for a general $t \in \mf{h}$, the natural morphism 
\[H^{2p}_{\mr{Hdg}}(\mc{X}_t)^{\mr{inv}} \hookrightarrow H^{2p}(\mc{X}_t, \mb{Q}) \xrightarrow[(i_t^*)^{-1}]{\sim} H^{2p}(\mc{X}_\infty, \mb{Q})\]
factors through $H^{2p}_{\mr{Hdg}}(\mc{X}_\infty)^{\mr{inv}}$. We will say that $\pi$ is a \emph{Mumford-Tate family of weight} $p$, if the morphism 
from $H^{2p}_{\mr{Hdg}}(\mc{X}_t)^{\mr{inv}}$ to $H^{2p}_{\mr{Hdg}}(\mc{X}_\infty)^{\mr{inv}}$ is surjective.

\begin{exa}
Let $\pi$ be a trivial family i.e., $\mc{X}_{\Delta^*}$ is isomorphic to $X \times \Delta^*$ for some 
smooth, projective variety $X$. This implies that there is no non-trivial monodromy action on $H^{2p}(\mc{X}_t, \mb{Q})$ Therefore, 
\[H^{2p}_{\mr{Hdg}}(\mc{X}_t, \mb{Q})^{\mr{inv}} \cong H^{2p}_{\mr{Hdg}}(\mc{X}_t, \mb{Q}),\,  \mr{Gr}^W_{2p} H^{2p}(\mc{X}_\infty, \mb{Q}) \cong H^{2p}(\mc{X}_\infty, \mb{Q})\]
and $i_t^*$ is an isomorphism of pure Hodge structures. This implies the image of 
 \[ H^{2p}_{\mr{Hdg}}(\mc{X}_t, \mb{Q})^{\mr{inv}} \hookrightarrow H^{2p}(\mc{X}_t, \mb{Q}) \stackrel{i_t^*}{\cong}
    H^{2p}(\mc{X}_\infty, \mb{Q}) \cong \mr{Gr}^W_{2p} H^{2p}(\mc{X}_\infty, \mb{Q})\]
 coincides with $H^{2p}_{\mr{Hdg}}(\mc{X}_\infty, \mb{Q})^{\mr{inv}}$. Therefore, $\pi$ is a Mumford-Tate family.
\end{exa}

\subsection{Hard Lefschetz and Mumford-Tate families}
Mumford-Tate families satisfy a symmetric relation induced by the hard Lefschetz theorem (see Proposition \ref{prop:hl}).

Suppose that the relative dimension of the smooth family $\pi$ is $n$.
Denote by $\mb{L}$ the relative hyperplane section on $\mc{X}$. 
Fix an integer $p<n$ such that $n-p$ is even. By the hard Lefschetz theorem, we have an isomorphism 
\begin{equation}\label{eq:relhl}
     \cup \mb{L}^{^p}: H^{n-p}(\mc{X}_\infty, \mb{Q}) \xrightarrow{\sim} H^{n+p}(\mc{X}_\infty, \mb{Q})
\end{equation}
given by the cup-product with $p$-copies of the relative hyperplane section $\mb{L}$. We prove:

\begin{prop}\label{prop:hl}
     The family $\pi$ is a Mumford-Tate family of weight $n-p$ if and only if it is Mumford-Tate of weight $n+p$.
\end{prop}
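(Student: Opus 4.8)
The plan is to exploit the fact that the Lefschetz operator $\cup\,\mb{L}^p$ is compatible with all the relevant structures: it is a morphism of the limit mixed Hodge structures (being induced by cup product with the first Chern class of a relative ample line bundle, which is an algebraic, hence monodromy-equivariant, class of type $(1,1)$), and it commutes with the monodromy logarithm $N$ up to the appropriate shift, so it carries invariant Hodge classes to invariant Hodge classes. First I would record the basic compatibilities: since $\mb{L}$ extends over $\mc{X}$, its restriction to each fiber is monodromy invariant, so $\cup\,\mb{L}^p$ commutes with $T$ and hence with $N$; it shifts weights by $2p$ and Hodge bidegree by $(p,p)$; and by the relative hard Lefschetz theorem \eqref{eq:relhl} it is an isomorphism $H^{n-p}(\mc{X}_\infty,\mb{Q})\xrightarrow{\sim} H^{n+p}(\mc{X}_\infty,\mb{Q})$ of mixed Hodge structures (up to Tate twist). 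Consequently it induces isomorphisms on the weight graded pieces, on the $(p',p')$-parts of the middle graded piece, and commutes with $N$ acting on these; intersecting, $\cup\,\mb{L}^p$ restricts to an isomorphism $H^{n-p}_{\mr{Hdg}}(\mc{X}_\infty)^{\mr{inv}}\xrightarrow{\sim}H^{n+p}_{\mr{Hdg}}(\mc{X}_\infty)^{\mr{inv}}$. The same cup product also gives, fiberwise by classical hard Lefschetz, an isomorphism $H^{n-p}_{\mr{Hdg}}(\mc{X}_t)^{\mr{inv}}\xrightarrow{\sim}H^{n+p}_{\mr{Hdg}}(\mc{X}_t)^{\mr{inv}}$ for general $t\in\mf{h}$.

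Next I would assemble the square. For general $t$ one has the commutative diagram
\[
\begin{diagram}
\node{H^{n-p}_{\mr{Hdg}}(\mc{X}_t)^{\mr{inv}}} \arrow{e} \arrow{s,l}{\cup\,\mb{L}^p} \node{H^{n-p}_{\mr{Hdg}}(\mc{X}_\infty)^{\mr{inv}}} \arrow{s,r}{\cup\,\mb{L}^p}\\
\node{H^{n+p}_{\mr{Hdg}}(\mc{X}_t)^{\mr{inv}}} \arrow{e} \node{H^{n+p}_{\mr{Hdg}}(\mc{X}_\infty)^{\mr{inv}}}
\end{diagram}
\]
where the horizontal arrows are the specialization maps $(i_t^*)^{-1}$ of the Mumford-Tate definition (these are defined using the limit procedure of \S\ref{sec:lhf}, which is itself built from $\mr{exp}(-uN)$ and hence also commutes with $\cup\,\mb{L}^p$), and the vertical arrows are the two hard-Lefschetz isomorphisms just described. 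Commutativity of this square follows because cup product with the relative hyperplane class is functorial under restriction to fibers and compatible with the limit mixed Hodge structure. Since both vertical maps are isomorphisms, the top horizontal map is surjective if and only if the bottom one is; that is exactly the statement that $\pi$ is Mumford-Tate of weight $n-p$ iff it is Mumford-Tate of weight $n+p$.

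The main obstacle I anticipate is justifying carefully that the limit-of-Hodge-classes construction of \S\ref{sec:lhf} — the map $H^{2p}_{\mr{Hdg}}(\mc{X}_t)^{\mr{inv}}\to H^{2p}_{\mr{Hdg}}(\mc{X}_\infty)^{\mr{inv}}$ whose surjectivity defines the Mumford-Tate property — genuinely commutes with $\cup\,\mb{L}^p$. The subtle point is that this map is not literally $(i_t^*)^{-1}$ but involves passing to the limit as $\Ima(t)\to\infty$ via $\mr{exp}(-uN)(i_u^*)^{-1}$; one must check that $\mb{L}$, being the relative hyperplane class, is respected by $i_u^*$ (clear, as it is a global class) and by $\mr{exp}(-uN)$ (clear, since $N(\mb{L}\cup -)=\mb{L}\cup N(-)$ because $N(\mb{L})=0$). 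Once this commutativity is in hand, together with the relative and classical hard Lefschetz isomorphisms and the observation that $\cup\,\mb{L}^p$ preserves the "invariant Hodge" subspaces on both $\mc{X}_t$ and $\mc{X}_\infty$, the proposition follows formally from the five-lemma-style argument on the square above. A minor bookkeeping point is the Tate twist: hard Lefschetz gives an isomorphism onto $H^{n+p}(-)(p)$, so one should phrase everything so that the $(p',p')$-Hodge classes of weight $n-p$ match the $(p'+p,p'+p)$-Hodge classes of weight $n+p$, which is automatic since a Tate twist by $(p)$ shifts bidegree by $(-p,-p)$ and weight by $-2p$.
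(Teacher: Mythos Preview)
Your proposal is correct and follows essentially the same route as the paper: both argue that $\cup\,\mb{L}^p$ commutes with $N$ (the paper invokes Corollary~\ref{cor:mon}, you argue directly from $N(\mb{L})=0$), deduce that the relative hard Lefschetz isomorphism \eqref{eq:relhl} respects the limit mixed Hodge structures and hence identifies $H^{n-p}_{\mr{Hdg}}(\mc{X}_\infty)^{\mr{inv}}$ with $H^{n+p}_{\mr{Hdg}}(\mc{X}_\infty)^{\mr{inv}}$, and then combine with classical hard Lefschetz on the fibers in a commutative square. Your concern about the limit procedure is slightly misplaced---the map in the Mumford-Tate definition is literally $(i_t^*)^{-1}$, and it is only the Hodge filtration on the target that involves the limit---but your resolution is correct regardless.
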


\begin{proof}
Suppose first that $\pi$ is a Mumford-Tate family of weight $n-p$. 
By Corollary \ref{cor:mon}, we have for any $\alpha \in H^{n-p}(\mc{X}_\infty, \mb{Q})$:
\[N(\mb{L}^{^p} \cup \alpha)=\mb{L}^{^p} \cup N(\alpha), \]
i.e., $N$ commutes with the isomorphism \eqref{eq:relhl}. By the unicity of the weight filtration 
satisfying the two conditions in \S \ref{sec:lhf}, we conclude that \eqref{eq:relhl} is a morphism of limit 
mixed Hodge structures. Therefore, 
\[H^{n-p}_{\mr{Hdg}}(\mc{X}_\infty, \mb{Q})^{\mr{inv}} \cong H^{n+p}_{\mr{Hdg}}(\mc{X}_\infty, \mb{Q})^{\mr{inv}}.\]
Then, by the hard Lefschetz theorem we have for general $t \in \Delta^*$:
\[H^{n+p}_{\mr{Hdg}}(\mc{X}_t,\mb{Q})^{\mr{inv}} \cong H^{n-p}_{\mr{Hdg}}(\mc{X}_t, \mb{Q})^{\mr{inv}} \xrightarrow{\sim} H^{n-p}_{\mr{Hdg}}(\mc{X}_\infty, \mb{Q})^{\mr{inv}} \cong H^{n+p}_{\mr{Hdg}}(\mc{X}_\infty, \mb{Q})\]
    where the second isomorphism follows from the assumption that $\pi$ is Mumford-Tate of weight 
    $n-p$. This implies that $\pi$ is Mumford-Tate of weight $n+p$. The converse follows identically. This proves the proposition.
\end{proof}

\subsection{Relative cycle class}\label{sec:relcc}
Given a smooth, projective variety, the cohomology class of an algebraic cycle is known to be a Hodge class.
It is unknown whether an analogous result holds in a relative setup. In particular, given a family $\pi$ as in \S \ref{se01},
we ask: Is the the cohomology class of a relative cycle is a limit Hodge class? We answer this in Propositions \ref{prop02} and \ref{prop04}.

Let $\mc{Z} \subset \mc{X}_{\Delta^*}$ be a closed 
subscheme of $\mc{X}_{\Delta^*}$, flat over $\Delta^*$
and of relative dimension $n-p$.
The fundamental class  
 of $\mc{Z}$ defines a global section $\gamma_{_{\mc{Z}}}$ of the local system $\mb{H}^{2p}:= R^{2p}\pi_*\mb{Z}$ 
 such that for every $t \in \Delta^*$, 
 the value $\gamma_{_{\mc{Z}}}(t) \in H^{2p}(\mc{X}_t,\mb{Z})$
 of $\gamma_{_{\mc{Z}}}$ at the point $t$
 is simply the fundamental class of $\mc{Z}_t:=\mc{Z} \cap \mc{X}_t$ in $\mc{X}_t$ (see \cite[\S $19.2$]{fult} and 
 \cite[\S B.$2.9$]{pet} for details). The pull-back of the local system $\mb{H}^{2p}$
under the exponential map $e:\mf{h} \to \Delta^*$ is a 
trivial local system with fiber $H^{2p}(\mc{X}_\infty, \mb{Z})$. The global section $\gamma_{_{\mc{Z}}}$ defines an element of $H^{2p}(\mc{X}_\infty, \mb{Z})$, which we again denote by $\gamma_{_{\mc{Z}}}$, such that for every $s \in \mf{h}$, the image $i_s^*(\gamma_{_{\mc{Z}}})$ is the fundamental class of $\mc{Z} \cap \mc{X}_{e(s)}$ in $\mc{X}_{e(s)}$, where $i_s$ is the natural inclusion of $\mc{X}_{e(s)}$ into $\mc{X}_\infty$.  
 
 \begin{defi}\label{defi:lim-alg}
 Denote by $H^{2p}_A(\mc{X}_\infty)$
the sub-vector space of $H^{2p}(\mc{X}_\infty, \mb{Q})$ 
generated by all such elements of the form $\gamma_{_{\mc{Z}}}$
arising from a $\Delta^*$-flat closed subscheme of 
relative dimension $n-p$ in $\mc{X}_{\Delta^*}$. We call 
$H^{2p}_A(\mc{X}_\infty)$ the \emph{limit 
algebraic cohomology group}.
 \end{defi}

  \begin{prop}\label{prop02}
  The limit algebraic cohomology group is contained in the monodromy invariant part of the 
  limit Hodge cohomology group i.e., the natural inclusion 
  $H^{2p}_A(\mc{X}_\infty) \subset H^{2p}(\mc{X}_\infty,
  \mb{Q})$ factors through $H^{2p}_{\mr{Hdg}}(\mc{X}_\infty)^{\mr{inv}}$.
  \end{prop}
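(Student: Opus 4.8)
The plan is to prove two things about an arbitrary generator $\gamma_{_{\mc{Z}}}$ of $H^{2p}_A(\mc{X}_\infty)$: (i) it lies in $H^{2p}_{\mr{Hdg}}(\mc{X}_\infty)$, i.e. it is a Hodge class for the limit mixed Hodge structure, and (ii) it is killed by $N$, i.e. it is monodromy invariant. Since $H^{2p}_A(\mc{X}_\infty)$ is spanned by such classes and both conditions cut out linear subspaces, handling a single $\gamma_{_{\mc{Z}}}$ suffices.

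For the monodromy invariance, I would argue directly from the description of $T$ in \eqref{eq:mon}. The class $\gamma_{_{\mc{Z}}}$ is, by construction, the flat global section of $\mb{H}^{2p}=R^{2p}\pi_*\mb{Z}$ whose value at each $t\in\Delta^*$ is the fundamental class of $\mc{Z}_t$; pulled back along $e:\mf{h}\to\Delta^*$ it becomes a genuinely constant section of the trivial local system with fiber $H^{2p}(\mc{X}_\infty,\mb{Z})$, because $\mf{h}$ is simply connected. Concretely, $i_s^*(\gamma_{_{\mc{Z}}})$ is the fundamental class of $\mc{Z}\cap\mc{X}_{e(s)}$ for every $s$, and this does not depend on the choice of $s$ in a fiber $e^{-1}(t)$ — the fundamental class of $\mc{Z}_t$ in $H^{2p}(\mc{X}_t,\mb{Z})$ is a single intrinsic element independent of the identification. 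Feeding this into the composition \eqref{eq:mon} with $h:u\mapsto u+1$ gives $T(\gamma_{_{\mc{Z}}})=(i_u^*)^{-1}i_{u+1}^*(\gamma_{_{\mc{Z}}}) = (i_u^*)^{-1}(\text{fund. class of }\mc{Z}_{e(u)}) = \gamma_{_{\mc{Z}}}$. Hence $T(\gamma_{_{\mc{Z}}})=\gamma_{_{\mc{Z}}}$, so a fortiori $T^{\mr{uni}}$ fixes it and $N\gamma_{_{\mc{Z}}}=\log(T^{\mr{uni}})\gamma_{_{\mc{Z}}}=0$.

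For the Hodge-class property, I would use the characterization of $H^{2p}_{\mr{Hdg}}(\mc{X}_\infty)$ as $F^pH^{2p}(\mc{X}_\infty,\mb{C})\cap \mr{Gr}^W_{2p}(\cdots)$ coming from the limit MHS, together with the definition of the limit Hodge filtration $F^kH^{2p}(\mc{X}_\infty,\mb{C})=\lim_{\Ima u\to\infty}\exp(-uN)(i_u^*)^{-1}F^kH^{2p}(\mc{X}_u,\mb{C})$. Since $\gamma_{_{\mc{Z}}}$ is the fundamental class of an algebraic cycle on the smooth projective fiber $\mc{X}_u$, it lies in $F^pH^{2p}(\mc{X}_u,\mb{C})$ for every $u$ (classical fact), and by the first part $N\gamma_{_{\mc{Z}}}=0$ so $\exp(-uN)(i_u^*)^{-1}\gamma_{_{\mc{Z}}}=(i_u^*)^{-1}\gamma_{_{\mc{Z}}}=\gamma_{_{\mc{Z}}}$ is constant in $u$; the limit is therefore $\gamma_{_{\mc{Z}}}$ itself, so $\gamma_{_{\mc{Z}}}\in F^pH^{2p}(\mc{X}_\infty,\mb{C})$. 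By conjugation symmetry of the Hodge decomposition on $\mr{Gr}^W_{2p}$, a rational class lying in $F^p$ of a weight-$2p$ piece actually lies in $F^p\cap\overline{F^p}=H^{p,p}$, so its image in $\mr{Gr}^W_{2p}$ is a $(p,p)$-class; combined with rationality this shows $\gamma_{_{\mc{Z}}}\in H^{2p}_{\mr{Hdg}}(\mc{X}_\infty)$. Putting the two parts together, the inclusion $H^{2p}_A(\mc{X}_\infty)\subset H^{2p}(\mc{X}_\infty,\mb{Q})$ factors through $H^{2p}_{\mr{Hdg}}(\mc{X}_\infty)^{\mr{inv}}$, as claimed.

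The main obstacle — really the only subtle point — is making the monodromy-invariance argument watertight: one has to be careful that the fundamental class $\gamma_{_{\mc{Z}}}(t)\in H^{2p}(\mc{X}_t,\mb{Z})$ is genuinely well-defined independently of the identification used, and that passing to the trivial pulled-back local system on $\mf{h}$ and then applying \eqref{eq:mon} really does compute $T$ on the flat section as the identity. This is where I would cite \cite[\S 19.2]{fult} and \cite[\S B.2.9]{pet} for the relative fundamental class being a flat section of $R^{2p}\pi_*\mb{Z}$, which is precisely the statement that $T$ fixes $\gamma_{_{\mc{Z}}}$; the rest is then formal. The fact that algebraic cycle classes sit in $F^p$ on each smooth fiber is standard and needs no argument beyond a citation.
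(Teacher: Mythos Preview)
Your argument is essentially the paper's own: show each generator $\gamma_{\mc{Z}}$ is monodromy invariant (as a global section of $\mb{H}^{2p}$), then use $N\gamma_{\mc{Z}}=0$ together with the cycle class lying in $F^p$ on every fiber to place $\gamma_{\mc{Z}}$ in the limit $F^p$. The overall strategy and the citations are the same.

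There is one step you glide over that the paper makes explicit: you pass to ``its image in $\mr{Gr}^W_{2p}$'' without verifying $\gamma_{\mc{Z}}\in W_{2p}H^{2p}(\mc{X}_\infty,\mb{Q})$. For the limit mixed Hodge structure the monodromy weight filtration is centered at $2p$ and can have weights strictly above $2p$, so this is not automatic. The paper handles it by invoking the local invariant cycle theorem (a $T$-invariant class comes from $H^{2p}(\mc{X}_0,\mb{Q})$, which has weights $\le 2p$). You can instead close the gap more cheaply, directly from what you already proved: if $\gamma\notin W_{2p}$, take the largest $l>0$ with $\gamma\in W_{2p+l}\setminus W_{2p+l-1}$; then the isomorphism $N^l:\mr{Gr}^W_{2p+l}\xrightarrow{\sim}\mr{Gr}^W_{2p-l}$ forces $N^l\gamma\neq 0$, contradicting $N\gamma=0$. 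Either route gives $\gamma_{\mc{Z}}\in W_{2p}$, after which your $F^p$-plus-rationality argument on $\mr{Gr}^W_{2p}$ goes through as written.
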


  \begin{proof}
   Take $\gamma \in H^{2p}_A(\mc{X}_\infty)$. 
   By construction, there exist 
   $\Delta^*$-flat closed subschemes $\mc{Z}_1,...,\mc{Z}_r$
   of relative dimension $n-p$ in $\mc{X}_{\Delta^*}$
   such that $\gamma=\sum a_i\gamma_{_{\mc{Z}_i}}$ for $a_i \in \mb{Q}$ and $\gamma_{_{\mc{Z}_i}} \in H^{2p}_A(\mc{X}_\infty)$
   is as defined above, arising from the fundamental
   class of $\mc{Z}_i$. 
   By construction, each $\gamma_{_{\mc{Z}_i}}$ arises from a 
   global section of the local system $\mb{H}^{2p}$.
   Hence, $\gamma_{_{\mc{Z}_i}}$ is monodromy invariant i.e.,
   $T(\gamma_{_{\mc{Z}_i}})=\gamma_{_{\mc{Z}_i}}$ for $1 \le i \le r$.
   This implies $N \gamma_{_{\mc{Z}_i}}=0$ for $1 \le i \le r$.
   
   As the cohomology class of $\mc{Z}_i \cap \mc{X}_{e(s)}$
   lies in $F^pH^{2p}(\mc{X}_{e(s)}, \mb{Q})$, we have 
   $\gamma_{_{\mc{Z}_i}} \in F^p_s H^{2p}(\mc{X}_\infty, \mb{Q})$
   for all $s \in \mf{h}$ (notations as in \S \ref{sec:lhf}).
   This implies $\gamma_{_{\mc{Z}_i}}$
   lies in $\mr{exp}(2\pi i sN)F^p_sH^{2p}(\mc{X}_\infty, \mb{Q})$ for every $s \in \mf{h}$.
   Recall from \S \ref{sec:lhf} that $F^pH^{2p}(\mc{X}_\infty, \mb{Q})$ contains 
   the limit of $\mr{exp}(2\pi i sN)F^p_sH^{2p}(\mc{X}_\infty, \mb{C})$ as $\mr{Im}(s)$ approaches $\infty$. Hence,
   $\gamma_{_{\mc{Z}_i}} \in F^pH^{2p}(\mc{X}_\infty, \mb{Q})$.
   As $\gamma_{_{\mc{Z}_i}}$ is monodromy invariant
   and a rational class,
   it must lie in $W_{2p} H^{2p}(\mc{X}_\infty, \mb{Q})$ (use the 
   invariant cycle theorem along with the fact that the degree $2p$ 
   cohomology of the central fiber is of weight 
   at most $2p$). 
   Therefore, $\gamma \in H^{2p}_{\mr{Hdg}}(\mc{X}_\infty)^{\mr{inv}}$.
   This proves the first part of the proposition.
  \end{proof}

  We now ask when is $H^{2p}_A(\mc{X}_\infty)$
  isomorphic to $H^{2p}_{\mr{Hdg}}(\mc{X}_\infty)^{\mr{inv}}$?
  One can naively guess that if the general fibers in the family $\pi$ satisfy the 
  Hodge conjecture then this happens. However, this is not enough (see \S \ref{nonmtf} below).  
  In particular, one needs to assume that the family $\pi$ is Mumford-Tate. In particular,

\begin{prop}\label{prop04}
  Suppose that $\pi$ is a Mumford-Tate family of weight $p$. 
  If a general fiber of $\pi$ satisfies HC$(p)$,
   then the inclusion from $H^{2p}_A(\mc{X}_\infty)$ 
   to $H^{2p}_{\mr{Hdg}}(\mc{X}_\infty)^{\mr{inv}}$ is an isomorphism.
\end{prop}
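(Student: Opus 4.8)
The plan is to show the reverse inclusion $H^{2p}_{\mr{Hdg}}(\mc{X}_\infty)^{\mr{inv}} \subseteq H^{2p}_A(\mc{X}_\infty)$; the forward inclusion is Proposition \ref{prop02}. So fix a class $\beta \in H^{2p}_{\mr{Hdg}}(\mc{X}_\infty)^{\mr{inv}}$. Since $\pi$ is a Mumford-Tate family of weight $p$, by the definition in \S \ref{sec:mt-def} there is a general $t \in \mf{h}$ and a class $\alpha \in H^{2p}_{\mr{Hdg}}(\mc{X}_t)^{\mr{inv}}$ whose image under $(i_t^*)^{-1}: H^{2p}(\mc{X}_t,\mb{Q}) \xrightarrow{\sim} H^{2p}(\mc{X}_\infty,\mb{Q})$ equals $\beta$. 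Thus $\alpha$ is a genuine rational Hodge class on the smooth projective fiber $\mc{X}_t$.

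Next I would invoke the hypothesis that a general fiber satisfies $\mr{HC}(p)$: applied to $\mc{X}_t$ (choosing $t$ general so that both the Mumford-Tate surjectivity and the Hodge conjecture apply), we obtain an algebraic cycle $Z_t \subset \mc{X}_t$ of codimension $p$ with $\mr{cl}^p(Z_t) = \alpha$ in $H^{2p}(\mc{X}_t,\mb{Q})$, up to replacing $\alpha$ by a nonzero rational multiple (which is harmless since $H^{2p}_A(\mc{X}_\infty)$ is a $\mb{Q}$-vector space). The key geometric step is then to spread $Z_t$ out to a relative cycle: a codimension-$p$ subvariety of a single fiber can, after a finite base change of $\Delta^*$ (equivalently, after shrinking and passing to a cover — but since $\gamma_{_{\mc{Z}}}$ only depends on the flat family and monodromy invariance is automatic for global sections of $\mb{H}^{2p}$, one can work directly over $\Delta^*$ or over a suitable étale neighborhood), be extended to a closed subscheme $\mc{Z} \subset \mc{X}_{\Delta^*}$ flat over $\Delta^*$ of relative dimension $n-p$, with $\mc{Z}_t = Z_t$. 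This is where I expect the main work to lie: one needs that the relative Hilbert scheme (or Chow variety) parametrizing such cycles is proper over $\Delta^*$ so that a point in the fiber over $t$ propagates; properness of the Hilbert scheme, together with flatness of the resulting family, is the mechanism. A cleaner alternative is to use the fact that $H^{2p}_A(\mc{X}_\infty)$ may equivalently be generated by classes of cycles defined over generic points, and then note that the monodromy-invariant Hodge class $\alpha$ being algebraic on one (general) fiber forces its $\mb{H}^{2p}$-section to be represented by a relative cycle on a Zariski-dense open; since such a relative cycle has constant cohomology class, it defines exactly the element $\gamma_{_{\mc{Z}}} = \beta$ of Definition \ref{defi:lim-alg}.

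Finally, by construction the fundamental class of $\mc{Z}$ gives a global section $\gamma_{_{\mc{Z}}}$ of $\mb{H}^{2p}$ whose associated element of $H^{2p}(\mc{X}_\infty,\mb{Q})$ satisfies $i_t^*(\gamma_{_{\mc{Z}}}) = \mr{cl}^p(\mc{Z}_t) = \mr{cl}^p(Z_t) = \alpha$, hence $\gamma_{_{\mc{Z}}} = (i_t^*)^{-1}(\alpha) = \beta$. Therefore $\beta \in H^{2p}_A(\mc{X}_\infty)$, and combined with Proposition \ref{prop02} this yields the asserted isomorphism. The main obstacle, as noted, is the spreading-out step: ensuring that an algebraic cycle on a single general fiber extends to a $\Delta^*$-flat relative cycle. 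I would handle this via properness of the relative Hilbert scheme of $\mc{X}_{\Delta^*} \to \Delta^*$ (decomposing $Z_t$ into irreducible components, extending each by taking the closure of the corresponding Hilbert point and using the valuative criterion, and observing that flatness holds automatically over the smooth curve $\Delta^*$ since the total space is then integral of the right dimension); the minor point about possibly needing a finite base change to realize the Hilbert point rationally is absorbed because passing to a cover $\Delta^* \to \Delta^*$ does not change $\mc{X}_\infty$ or the limit mixed Hodge structure.
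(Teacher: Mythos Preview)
Your proposal is correct and follows essentially the same approach as the paper: use the Mumford-Tate property to realize a monodromy-invariant limit Hodge class as a Hodge class on a general fiber, apply $\mr{HC}(p)$ there, and then spread the resulting cycle out over $\Delta^*$. The only difference is that where you invoke properness of the relative Hilbert scheme and worry about finite base change, the paper simply takes the Zariski closure of $\mc{Z}_t$ in $\mc{X}$ (which is automatically $\Delta^*$-flat since $\Delta^*$ is a smooth curve), making the spreading-out step a one-liner rather than the main obstacle.
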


 Note that, by \emph{general} in the statement of the proposition,
we mean the complement of \emph{finitely many} proper, 
closed subvarieties 
of the punctured disc $\Delta^*$.

\begin{proof}
   Since $\pi$ is a Mumford-Tate family, for a general $t \in \mf{h}$ the morphism 
   \[H^{2p}_{\mr{Hdg}}(\mc{X}_t)^{\mr{inv}} \to H^{2p}_{\mr{Hdg}}(\mc{X}_\infty)^{\mr{inv}}\]
   mentioned in \S \ref{sec:mt-def} is an isomorphism. By assumption, $\mc{X}_t$ satisfies $\mr{HC}(p)$.
   In particular, the natural inclusion from $H^{2p}_A(\mc{X}_t)^{\mr{inv}}$ to $H^{2p}_{\mr{Hdg}}(\mc{X}_t)^{\mr{inv}}$
   is an isomorphism for general $t \in \mf{h}$. Moreover, the natural morphism 
   \[i_t^*: H^{2p}_A(\mc{X}_\infty) \to H^{2p}_A(\mc{X}_t)^{\mr{inv}}\]
   is an isomorphism for $t \in \mf{h}$ general. Indeed, injectivity is obvious. To prove surjectivity, 
   consider an algebraic cycle $\mc{Z}_t$ of codimension $p$ in $\mc{X}_t$. Since $t \in \mf{h}$ is general, $\mc{Z}_t$ extends to 
   a $\Delta^*$-flat closed subscheme $\mc{Z} \subset \mc{X}$ of relative dimension $n-p$ such that the fiber over $t$ is $\mc{Z}_t$. 
   In some sense, $\mc{Z}$ is the Zariski closure of    $\mc{Z}_t$ in $\mc{X}$. Then, the cohomology class of $\mc{Z}$ defines an 
   element $\gamma \in H^{2p}_A(\mc{X}_\infty)$ such that $i_t^*(\alpha)=[\mc{Z}_t]$. This implies surjectivity of $i_t^*$ above.
   Therefore, we have the following composed morphism which is an isomorphism:
   \[H^{2p}_A(\mc{X}_\infty) \xrightarrow[\sim]{i_t^*} H^{2p}_A(\mc{X}_t)^{\mr{inv}} \cong H^{2p}_{\mr{Hdg}}(\mc{X}_t)^{\mr{inv}} \cong H^{2p}_{\mr{Hdg}}(\mc{X}_\infty)^{\mr{inv}}.\]
   This proves the proposition.   
\end{proof}

\subsection{Non-example of Mumford-Tate families}\label{nonmtf}
 Recall for $d \ge 4$, the Noether-Lefschetz theorem states that a very general smooth, degree $d$ 
 surface in $\p3$ has Picard number $1$. The Noether-Lefschetz  
 locus parametrizes smooth degree $d$ surfaces in 
 $\p3$ with Picard number at least $2$. See  \cite{Dcont, D3, dan7} for some its geometric 
 properties. This means that 
 there are smooth families $\pi: \mc{X} \to \Delta$
 of hypersurfaces in $\p3$ such that $0 \in \Delta$ lies on the Noether-Lefschetz locus and 
 $\Delta^*$ does not intersect the Noether-Lefschetz locus. 
 Since $\pi$ is a smooth family, the local system $R^2\pi_*\mb{Q}$ does not have any monodromy over 
 the punctured disc. 
 Then, $H^2(\mc{X}_\infty, \mb{Q}) \cong H^2(\mc{X}_0.\mb{Q})$ as pure Hodge structures. In particular, 
 by the condition on the central fiber $\mc{X}_0$, 
 the rank of the Hodge lattice in $H^2(\mc{X}_\infty,\mb{Q})$ is at least $2$. But the rank of the 
 Hodge lattice in $H^2(\mc{X}_s,\mb{Q})$ is $1$ for any $s \in \Delta^*$. Since the pullback morphism 
 $i_s^*$ is an isomorphism, this implies that there is a Hodge class on $H^2(\mc{X}_\infty,\mb{Q})$ that 
 does not pullback to a Hodge class on $H^2(\mc{X}_s,\mb{Q})$. Hence, $\pi$ cannot be a Mumford-Tate family.

\subsection{Examples of Mumford-Tate families} 

\begin{thm}\label{thm:mt-exa}
Let $W$ be a smooth, projective variety of dimension $n$ and 
     \[\pi_1: \mc{X} \to \Delta^*, \, \pi_2: \mc{Y} \to \Delta^*\]
    be two smooth, projective families of ample hypersurfaces in $W$ in the sense that for all $t \in \Delta^*$, $\mc{X}_t$ and $\mc{Y}_t$
    are ample hypersurfaces in $W$. Then, the following holds true: 
    \begin{enumerate}
        \item If $n$ is even then $\pi_1, \pi_2$ are Mumford-Tate families.
        \item If $\pi_1$ and $\pi_2$ are Mumford-Tate families, then $\pi_1 \times \pi_2$ are Mumford-Tate families.
        \item If $\pi_1$ is a Mumford-Tate family, then the associated family of Hilbert scheme of points of length $2$ is a Mumford-Tate family.
        \item Let $Z \subset \mc{X}$ be a section of $\pi_1$ i.e., the composition $\pi_1:Z \to \mc{X} \to \Delta^*$ is an isomorphism.
        If $\mc{X}$ is a Mumford-Tate family, then the blow-up pf $\mc{X}$ along $Z$ is a Mumford-Tate family.
        \end{enumerate}
\end{thm}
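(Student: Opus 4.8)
\textbf{Proof proposal for Theorem \ref{thm:mt-exa}.}

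The plan is to verify each of the four statements by reducing the defining surjectivity condition of the Mumford–Tate property to the structure of the limit mixed Hodge structure under the relevant geometric construction. For part (1), the key point is the Lefschetz hyperplane theorem: when $n$ is even and $\mc{X}_t$ is an ample hypersurface in $W$, the restriction map $H^{k}(W,\mb{Q}) \to H^{k}(\mc{X}_t,\mb{Q})$ is an isomorphism for $k < n-1$ and injective for $k = n-1$. Taking $2p = 2m$ with $2m \neq n-1$ (which always holds since $n-1$ is odd), the cohomology $H^{2p}(\mc{X}_t,\mb{Q})$ either equals $H^{2p}(W,\mb{Q})$ — on which the monodromy acts trivially and which carries a constant Hodge structure, so every invariant limit Hodge class comes from $W$ and hence from every fiber — or, in the middle dimension $2p = n$ (possible only when $n$ is even), one uses that the primitive part in the middle is governed by the variation but the invariant Hodge classes still split off the constant part $H^{2p}(W,\mb{Q})$. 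I would first isolate the constant summand coming from $W$, observe it contributes all the monodromy-invariant classes outside the middle primitive cohomology, and then handle the middle case by noting $2p = n$ forces $n$ even and the statement follows from the fact that $H^{n}(\mc{X}_\infty,\mb{Q})$ has weight $\le n$ pieces whose invariant $(p,p)$-part is exactly the image from $H^{n}(\mc{X}_t)^{\mr{inv}}$ by the local invariant cycle theorem together with semisimplicity of the monodromy-invariant part.

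For part (2), I would use the Künneth decomposition $H^{2p}((\mc{X}\times_{\Delta^*}\mc{Y})_\infty,\mb{Q}) = \bigoplus_{a+b=2p} H^{a}(\mc{X}_\infty,\mb{Q}) \otimes H^{b}(\mc{Y}_\infty,\mb{Q})$, which is compatible with both weight and Hodge filtrations and under which $N^{(\pi_1\times\pi_2)} = N^{(\pi_1)}\otimes \mr{Id} + \mr{Id}\otimes N^{(\pi_2)}$. A limit Hodge class is invariant iff it is killed by this operator; writing such a class in Künneth components and using that each factor's weight filtration is a tensor decomposition, the invariant $(p,p)$-classes are spanned by tensor products $\alpha\otimes\beta$ with $\alpha \in H^{a}_{\mr{Hdg}}(\mc{X}_\infty)^{\mr{inv}}$, $\beta \in H^{b}_{\mr{Hdg}}(\mc{Y}_\infty)^{\mr{inv}}$ (after diagonalizing the $N$-action on the finite-dimensional space of Hodge classes, noting mixed Hodge structure morphisms are strict). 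The Mumford–Tate hypothesis on $\pi_1,\pi_2$ lets each such $\alpha,\beta$ be realized as limits of invariant Hodge classes on the fibers, and the tensor product of these gives the required class on $(\mc{X}_t\times\mc{Y}_t)$; one must check only that the Künneth isomorphism intertwines the limiting operation with the product, which is formal from the definition of the limit Hodge filtration in \S\ref{sec:lhf}.

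Parts (3) and (4) are then deduced from (2) and general principles about how cohomology transforms under the Hilbert scheme of two points and under blow-up along a section. For the Hilbert scheme $\mc{X}^{[2]}$, I would use the classical description $H^{*}(X^{[2]},\mb{Q}) = \big(\mr{Sym}^2 H^{*}(X,\mb{Q})\big) \oplus \big(H^{*}(X,\mb{Q}) \cdot E\big)$ relating $H^{*}(X^{[2]})$ to the symmetric square of $H^{*}(X)$ and the cohomology of the diagonal blown-up exceptional divisor — all constructions that are relative over $\Delta^*$ and compatible with the limit MHS — so the invariant limit Hodge classes of the family $\mc{X}^{[2]} \to \Delta^*$ decompose into a symmetric-square part (handled exactly as in (2)) and an exceptional part which is a Tate twist of $H^{*}(\mc{X}_\infty)$ and hence inherits the Mumford–Tate property directly. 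For part (4), blowing up along a section $Z\cong\Delta^*$, the projective bundle formula gives $H^{2p}(\mr{Bl}_Z\mc{X}_\infty,\mb{Q}) = H^{2p}(\mc{X}_\infty,\mb{Q}) \oplus \bigoplus_{j=1}^{c-1} H^{2p-2j}(Z,\mb{Q})(-j)$ where $c$ is the codimension, and since $Z\to\Delta^*$ is an isomorphism its cohomology is constant with trivial monodromy; each summand is then either the Mumford–Tate $\mc{X}$-part or a Tate twist of a constant piece, so the property is preserved. The main obstacle I anticipate is part (1) in the borderline middle-dimensional case $2p = n$: there one genuinely needs the local invariant cycle theorem and the semisimplicity of the category of polarizable mixed Hodge structures to conclude that every monodromy-invariant limit Hodge class is in the image from a nearby fiber, since outside this range the statement is essentially formal from Lefschetz; I would treat this case carefully and, if the primitive middle cohomology causes trouble, fall back on the observation that the hypersurfaces are \emph{ample} so that Poincaré–Lefschetz duality pins down the invariant part as the image of $H^{n}(W,\mb{Q})$ plus classes already accounted for.
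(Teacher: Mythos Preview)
Your approach to parts (1), (3), (4) is broadly aligned with the paper's, but there is a genuine gap in part (2), and it propagates to (3).

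\textbf{Part (1).} Your worry about ``the middle dimension $2p = n$'' is misplaced: the fibres $\mc{X}_t$ have dimension $n-1$, so their middle cohomology sits in degree $n-1$, which is \emph{odd} when $n$ is even. Hence no even-degree cohomology of $\mc{X}_t$ is middle, and by Lefschetz hyperplane (plus hard Lefschetz/Poincar\'e duality for degrees above $n-1$) every $H^{2p}(\mc{X}_t,\mb{Q})$ is canonically identified with a cohomology group of $W$, the local system $R^{2p}(\pi_1)_*\mb{Q}$ is trivial, and the Mumford--Tate property is immediate. There is no need for the local invariant cycle theorem or semisimplicity here; the paper's proof is the one-line observation that the variation is constant.

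\textbf{Part (2).} Here lies the real problem. You assert that the monodromy-invariant limit Hodge classes in $H^a(\mc{X}_\infty)\otimes H^b(\mc{Y}_\infty)$ are spanned by elements $\alpha\otimes\beta$ with $\alpha\in H^a_{\mr{Hdg}}(\mc{X}_\infty)^{\mr{inv}}$ and $\beta\in H^b_{\mr{Hdg}}(\mc{Y}_\infty)^{\mr{inv}}$. This is false in general: a Hodge class in a tensor product of Hodge structures need not decompose as a sum of tensors of Hodge classes (think of the class of the diagonal in $H^*(X)\otimes H^*(X)$ for $X$ with no interesting Hodge classes of its own, or of any nontrivial Hodge structure $V$ of weight $2$ with $V_{\mr{Hdg}}=0$ but $(V\otimes V)_{\mr{Hdg}}\neq 0$). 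Your parenthetical about ``diagonalizing the $N$-action'' and ``strictness'' does not rescue this; strictness controls compatibility with filtrations, not decomposability of tensors.

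The paper's proof of (2) does \emph{not} attempt this for arbitrary MT families. It exploits the standing hypothesis that $\mc{X}_t,\mc{Y}_t$ are ample hypersurfaces in the fixed $W$. After using Proposition~\ref{prop:hl} to reduce to weights $2p\le n-1$, Lefschetz hyperplane forces $H^i(\mc{X}_t)\cong H^i(W)\cong H^i(\mc{Y}_t)$ for all $i<n-1$. So in each K\"unneth summand $H^i(\mc{X}_\infty)\otimes H^{2p-i}(\mc{Y}_\infty)$ with $0<i<2p$ both factors are \emph{constant} variations, and the entire summand is constant --- its limit Hodge classes trivially agree with those on any fibre. The only summands in which something varies are $H^0\otimes H^{2p}$ and $H^{2p}\otimes H^0$ when $2p=n-1$; since $H^0\cong\mb{Q}$, these reduce to $H^{n-1}_{\mr{Hdg}}(\mc{Y}_\infty)^{\mr{inv}}$ and $H^{n-1}_{\mr{Hdg}}(\mc{X}_\infty)^{\mr{inv}}$ respectively, where the MT hypothesis on $\pi_1,\pi_2$ applies directly. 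The point is that the ample-hypersurface hypothesis ensures that in every K\"unneth piece at most one factor genuinely varies, which is exactly what makes the tensor-decomposition obstacle disappear.

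\textbf{Parts (3) and (4).} Your outlines are in the right spirit and match the paper's strategy. For (4) both arguments are essentially identical (projective bundle formula, constant cohomology of the section). For (3) the paper uses the Mayer--Vietoris-type exact sequence relating $\mc{X}_t^{(2)}$, $\mc{X}_t^{[2]}$, the diagonal $\Delta_t$ and the exceptional divisor $\mc{E}_t$, together with (2) applied to $\pi_1\times\pi_1$. Since your (3) rests on your (2), it inherits the same gap; once (2) is fixed via the hypersurface hypothesis as above, your sketch of (3) becomes essentially the paper's argument.
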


\begin{proof}
    $(1)$ By the Lefschetz hyperplane section theorem, for any $t \in \Delta^*$ and $p \ge 0$, we have 
    $H^{2p}(W,\mb{Q}) \cong H^{2p}(\mc{X}_t,\mb{Q})$. This implies $R^{2p}\left(\pi_1\right)_*\mb{Q}$ is a trivial local system, isomorphic 
    to $H^{2p}(W,\mb{Q}) \times \Delta^*$. Therefore, for all $t \in \Delta^*$, 
    \[H^{2p}_{\mr{Hdg}}(\mc{X}_t,\mb{Q})^{\mr{inv}} \cong H^{2p}_{\mr{Hdg}}(W,\mb{Q}) \cong H^{2p}_{\mr{Hdg}}(\mc{X}_\infty)^{\mr{inv}}. \]
    Hence, $\pi_1$ is a Mumford-Tate family. Similarly, $\pi_2$ is a Mumford-Tate family. This proves $(1)$.

    $(2)$ The family $\pi_1 \times \pi_2$ is of relative dimension $2n-2$.
    By Proposition \ref{prop:hl}, it suffices to check that $\pi_1 \times \pi_2$ 
    is a Mumford-Tate family of weight $p$ for all $p \le n-1$.
 The K\"{u}nneth decomposition induces the isomorphism:
    \begin{equation}\label{eq:relkun}
     \bigoplus_{i=0}^{2p} \left(H^{i}(\mc{X}_\infty, \mb{Q}) \otimes H^{2p-i}(\mc{Y}_\infty, \mb{Q})\right)  \xrightarrow{\sim} H^{2p}(\mc{X}_\infty \times_{\mf{h}} \mc{Y}_\infty, \mb{Q}).
    \end{equation}
 By the Lefschetz hyperplane theorem, for $i< (n-1)$ and $t \in \Delta^*$,
 we have $H^i(\mc{Y}_t, \mb{Q}) \cong H^i(W,\mb{Q}) \cong H^i(\mc{X}_t, \mb{Q})$. 
 This implies for $i<n-1$ we have an isomorphism of pure Hodge structures: 
 \[(H^i(\mc{X}_t,\mb{Q}) \otimes H^{2p-i}(\mc{Y}_t,\mb{Q}))^{\mr{inv}} \cong H^i(W,\mb{Q}) \otimes H^{2p-i}(W,\mb{Q}) \cong \left(H^i(\mc{X}_\infty,\mb{Q})
 \otimes H^{2p-i}(\mc{Y}_\infty, \mb{Q})\right)^{\mr{inv}}.\]
 Hence, the Hodge classes on the left-hand side maps isomorphically to that on the right hand side.
 Moreover, since $\pi_1, \pi_2$ are Mumford-Tate families, if $2p=n-1$, then 
 \[H^{2p}_{\mr{Hdg}}(\mc{X}_t, \mb{Q})^{\mr{inv}} \cong H^{2p}_{\mr{Hdg}}(\mc{X}_\infty, \mb{Q})^{\mr{inv}}\, \mbox{ and }
 H^{2p}_{\mr{Hdg}}(\mc{Y}_t, \mb{Q})^{\mr{inv}} \cong H^{2p}_{\mr{Hdg}}(\mc{Y}_\infty, \mb{Q})^{\mr{inv}}, \mbox{ for } t \in \Delta^*\, \mbox{ general}.\]
 This implies for $t \in \Delta^*$ general and $2p \le n-1$, that $H^{2p}_{\mr{Hdg}}(\mc{X}_t \times \mc{Y}_t,\mb{Q})^{\mr{inv}}$ is isomorphic to 
\[\bigoplus_{i=0}^{2p} \left(H^i(\mc{X}_t,\mb{Q}) \otimes H^{2p-i}(\mc{Y}_t, \mb{Q})\right)_{\mr{Hdg}}^{\mr{inv}} \cong \bigoplus_{i=0}^{2p} \left(H^i(\mc{X}_\infty,\mb{Q}) \otimes H^{2p-i}(\mc{Y}_\infty, \mb{Q})\right)_{\mr{Hdg}}^{\mr{inv}} \cong H^{2p}_{\mr{Hdg}}(\mc{X}_\infty \times_{\mf{h}} \mc{Y}_\infty)^{\mr{inv}}.\]
Hence, $\pi_1 \times \pi_2$ is a Mumford-Tate family. This proves $(2)$. 

$(3)$ Associated to this family $\pi$, we have the families of 
 symmetric products, Hilbert schemes of points and exceptional divisors of blow-ups along the diagonal:
 \[\pi^{(2)}: \mc{X}^{(2)} \to \Delta^*,\, \pi^{[2]}: \mc{X}^{[2]} \to \Delta^*\, \mbox{ and }\, \pi_{_E}: \mc{E} \to \Delta^*\]
such that for every $t \in \Delta^*$, the fiber $\left(\pi^{[2]}\right)^{-1}(t)=\mc{X}_t^{[2]}$ is
the Hilbert scheme of points in $\mc{X}_t$ of length $2$, 
$\left(\pi^{(2)}\right)^{-1}(t)=\mc{X}_t^{(2)}$ symmetric product of $\mc{X}_t$ with itself
and $\pi_E^{-1}(t)=\mc{E}_t$ is the exceptional divisor for the blow-up of $\mc{X}_t^{(2)}$ along the diagonal.
Note that, $\mc{X}_t^{[2]}$ is the blow-up of $\mc{X}_t^{(2)}$ along the diagonal $\Delta_t$.
 Then, we have an exact sequence of the form:
  \begin{equation}\label{eq:sym}
     ... \to H^{2p-1}(\mc{E}_t) \to H^{2p}(\mc{X}_t^{(2)}) \to H^{2p}(\mc{X}_t^{[2]}) \oplus H^{2p}(\Delta_t) \to H^{2p}(\mc{E}_t) \to 
  H^{2p+1}(\mc{X}_t^{(2)}) \to ... 
  \end{equation}
  Since $\Delta_t$ is a regular subvariety in $\mc{X}_t \times \mc{X}_t$,
  $\mc{E}_t$ is a $\mb{P}^{n-1}$-bundle over $\Delta_t$.
  Then, the Leray-Hirsch spectral sequence implies that  
\begin{equation}\label{eq:leray}
    H^{2p}(\mc{E}_t, \mb{Z}) \cong \bigoplus_i H^i(\mb{P}^{n-1}, \mb{Z}) \otimes H^{2p-i}(\mc{X}_t, \mb{Z}) \cong 
  \bigoplus_{i=0}^p H^{2i}(\mb{P}^{n-1}, \mb{Z}) \otimes H^{2p-2i}(\mc{X}_t, \mb{Z})
\end{equation}
 Therefore, 
 \begin{equation}\label{eq:kun3}
  H^{2p}_{\mr{Hdg}}(\mc{E}_\infty)^{\mr{inv}} \cong \bigoplus_{i=0}^p H^{2i}_{\mr{Hdg}}(\mb{P}^{n-1} \times \mf{h})^{\mr{inv}} \otimes H^{2p-2i}_{\mr{Hdg}}(\mc{X}_\infty)^{\mr{inv}} \cong \bigoplus_{i=0}^p H^{2i}_{\mr{Hdg}}(\mb{P}^{n-1})^{\mr{inv}} \otimes H^{2p-2i}_{\mr{Hdg}}(\mc{X}_t)^{\mr{inv}}   
 \end{equation}
which is isomorphic to $H^{2p}(\mc{E}_t)^{\mr{inv}}$. 
 Moreover, the Hodge classes of $H^{2p}(\mc{X}_\infty^{(2)}, \mb{Q})= H^{2p}(\mc{X}_\infty \times_{\mf{h}} \mc{X}_\infty, \mb{Q})^{\Sigma_2}$
  consists of the $\Sigma_2$-invariant Hodge classes in  $H^{2p}(\mc{X}_\infty \times_{\mf{h}} \mc{X}_\infty, \mb{Q})$, where $\Sigma_2$ is the 
  symmetric group over $2$ elements. 
  Since $\pi_1 \times \pi_1$ is a Mumford-Tate family (by $(2)$), 
  we have 
  \begin{equation}\label{eq:kun1}
      (H^{2p}_{\mr{Hdg}}(\mc{X}_{\infty} \times_{\mf{h}} \mc{X}_\infty,\mb{Q})^{\Sigma_2})^{\mr{inv}} \cong (H^{2p}_{\mr{Hdg}}(\mc{X}_t \times \mc{X}_t,\mb{Q})^{\Sigma_2})^{\mr{inv}}.
  \end{equation}
Since pullback commutes with monodromy, the exact sequence \eqref{eq:sym} induces the following 
commutative diagram of pure Hodge structures:
  \[\begin{diagram} 
     & &(H^{2p}_{\mr{Hdg}}(\mc{X}_{\infty} \times_{\mf{h}} \mc{X}_\infty,\mb{Q})^{\Sigma_2})^{\mr{inv}}&\rTo&H^{2p}_{\mr{Hdg}}(\mc{X}_{\infty}^{[2]})^{\mr{inv}} \oplus H^{2p}_{\mr{Hdg}}(\Delta_{\mc{X}_{\infty}})^{\mr{inv}}&\rTo&H^{2p}_{\mr{Hdg}}(\mc{E}_{\infty})^{\mr{inv}}\\
     & &\dTo^{\cong}&\circlearrowleft&\dTo&\circlearrowleft&\dTo^{\cong}\\
        0&\rTo&H^{2p}_{\mr{Hdg}}(\mc{X}_t^{(2)})^{\mr{inv}}&\rTo&H^{2p}_{\mr{Hdg}}(\mc{X}_t^{[2]})^{\mr{inv}} \oplus H^{2p}_{\mr{Hdg}}(\Delta_{\mc{X}_t})^{\mr{inv}}&\rTo&H^{2p}_{\mr{Hdg}}(\mc{E}_t)^{\mr{inv}}
    \end{diagram}\]
 where the first (resp. last) isomorphism follows from \eqref{eq:kun1} (resp. \eqref{eq:kun3}).
By construction the middle vertical arrow is surjective (every Hodge class on a general fibers extends to a Hodge class on the entire family).
By diagram chase, observe that the middle vertical arrow is also injective, hence an isomorphism.  Therefore, $\pi^{[2]}$ is a Mumford-Tate family.
This proves $(3)$. 

$(4)$ Denote by $\pi:\mr{Bl}_Z\mc{X} \to \Delta^*$ the blow-up $\wt{\mc{X}}$ of $\mc{X}$ along $Z$. Then, for every $t \in \Delta^*$, the fiber $\pi^{-1}(t)$
is the blow-up of $\mc{X}_t$ at the point $Z_t$. Denote by $E_t$ the associated exceptional divisor. Since $Z_t$ is a point, $E_t \cong \mb{P}^{n-2}$.
Hence, 
\[H^{2p}(\wt{\mc{X}}_t,\mb{Q}) \cong H^{2p}(\mc{X}_t,\mb{Q}) \oplus H^{2p}(\mb{P}^{n-2},\mb{Q}).\]
Since $\pi_1$ is a Mumford-Tate family and $H^{2p}(\mb{P}^{n-2},\mb{Q})$ is monodromy invariant, this implies for general $t \in \Delta^*$ that 
\[H^{2p}_{\mr{Hdg}}(\wt{\mc{X}}_t)^{\mr{inv}} \cong H^{2p}_{\mr{Hdg}}(\mc{X}_t)^{\mr{inv}} \oplus H^{2p}(\mb{P}^{n-2},\mb{Q}) \cong 
H^{2p}_{\mr{Hdg}}(\mc{X}_{\infty})^{\mr{inv}} \oplus H^{2p}(\mb{P}^{n-2},\mb{Q}) \cong H^{2p}_{\mr{Hdg}}(\mc{X}_\infty)^{\mr{inv}}. \]
Hence, $\pi$ is a Mumford-Tate family. This proves $(4)$ and hence the theorem. 
\end{proof}

  \subsection{Mumford-Tate families via correspondences}\label{sec:corr-mt}
  We can produce new Mumford-Tate families using a classical tool in Hodge theory known as correspondences.
  Correspondences are morphisms of pure Hodge structures induced by an algebraic cycle 
  (see \cite[\S $11.3.3$]{v4} for a detailed description). We will show that 
  varieties linked by correspondences inherit the property of Mumford-Tate (see Theorem \ref{thm:lim-mt}, Examples \ref{sec:fano} and \ref{sec:mod}).
  
  Consider two smooth, projective families over the punctured disc;
  \[\pi_1: \mc{X} \to \Delta^*\, \mbox{ and } \pi_2: \mc{Y} \to \Delta^*.\]
  of relative dimension $n$ and $m$, respectively. 
   Fix a positive integer $c$. 
  Let $\mc{Z} \subset \mc{X} \times_{\Delta^*} \mc{Y}$ be a closed subscheme,
  flat over $\Delta^*$ and for every $t \in \Delta^*$, the fiber $\mc{Z}_t$ is of 
  codimension $c$ in $\mc{X}_t \times \mc{Y}_t$. 
 Then, the cohomology class $[\mc{Z}]$ of $\mc{Z}$ defines a global section in the local system
 \[R^{2c} (\pi_1 \times \pi_2)_* \mb{Z},\, \mbox{ where } (\pi_1 \times \pi_2): \mc{X} \times_{\Delta^*} \mc{Y} \to \Delta^*\]
  is the natural morphism. This local system admits a Kunneth decomposition:
  \[ R^{2c} (\pi_1 \times \pi_2)_* \mb{Z} \cong \bigoplus_i \left( R^i \pi_{1,_*} \mb{Z} \otimes R^{2c-i}   \pi_{2,_*} \mb{Z}\right).  \]
 Denote by $[\mc{Z}]_i$ the image of the section $[\mc{Z}]$ in 
 $R^i \pi_{1,_*} \mb{Z} \otimes R^{2c-i}   \pi_{2,_*} \mb{Z}$ under the above identification.
 Note that $R^i \pi_{1,_*} \mb{Z}$ is Poincar\'{e} dual to the local system 
 $R^{2n-i} \pi_{1,_*} \mb{Z}$ i.e.,
 \[ R^i \pi_{1,_*} \mb{Z} \cong \left( R^{2n-i} \pi_{1,_*} \mb{Z} \right)^{\vee}.\]
 This means, $R^i \pi_{1,_*} \mb{Z} \otimes R^{2c-i}   \pi_{2,_*} \mb{Z}$ is isomorphic to 
 \[ \left( R^{2n-i} \pi_{1,_*} \mb{Z} \right)^{\vee} \otimes R^{2c-i}   \pi_{2,_*} \mb{Z}
  \cong \mr{Hom}\left( R^{2n-i} \pi_{1,_*} \mb{Z}, R^{2c-i}   \pi_{2,_*} \mb{Z}\right).\]
  Since $[\mc{Z}]_i$ is a global section, it defines a monodromy invariant morphism, say $\Phi_i$, 
  of local systems from $R^{2n-i} \pi_{1,_*} \mb{Z}$ to  $R^{2c-i}   \pi_{2,_*} \mb{Z}$ i.e.,
  we have the following commutative diagram:
  \begin{equation}\label{diag:corr}
   \begin{diagram}
     R^{2n-i} \pi_{1,_*} \mb{Z}&\rTo^{\Phi_i}&R^{2c-i}   \pi_{2,_*} \mb{Z}\\
     \dTo^{T^{(\pi_1)}}&\circlearrowleft&\dTo^{T^{(\pi_2)}}\\
     R^{2n-i} \pi_{1,_*} \mb{Z}&\rTo^{\Phi_i}&R^{2c-i}   \pi_{2,_*} \mb{Z}
         \end{diagram}
  \end{equation}
where $T^{(\pi_1)}$ (resp. $T^{(\pi_2)}$) are the monodromy operators acting on the local systems 
$R^{2n-i} \pi_{1,_*} \mb{Z}$ (resp. $R^{2c-i}   \pi_{2,_*} \mb{Z}$).
 Pulling back the local systems $R^{2n-i} \pi_{1,_*} \mb{Z}$ and $R^{2c-i} \pi_{2,_*} \mb{Z}$
   from $\Delta^*$ to the upper half plane $\mf{h}$ and then taking global sections, we get a morphism of cohomology groups
   \begin{equation}\label{eq:lim-corr}
   \Phi_i: H^{2n-i}(\mc{X}_\infty, \mb{Z}) \to H^{2c-i}(\mc{Y}_\infty, \mb{Q})\, \mbox{ which restricts to } \Phi_{i,t}: H^{2n-i}(\mc{X}_t,\mb{Z}) \to H^{2c-i}(\mc{Y}_t,\mb{Q})
    \end{equation}
    for all $t \in \mf{h}$.

\begin{defi}\label{def:corr}
 We will call such a morphism $\Phi_i$
 from $H^{2n-i}(\mc{X}_\infty, \mb{Z})$ to $H^{2c-i}(\mc{Y}_\infty, \mb{Q})$
   to be a \emph{correspondence between the families $\pi_1$ and $\pi_2$ induced 
 by $\mc{Z}$}. 
\end{defi}
 
  Although the morphism $\Phi_{i,t}$ is a morphism of pure Hodge structures, 
  $\Phi_i$ is not a morphism of pure Hodge structures. This is because 
  the cohomology of $\mc{X}_\infty$ and $\mc{Y}_\infty$ are equipped with limit 
  mixed Hodge structure induced by the  monodromy operator. Surprisingly, $\Phi_i$ is a morphism of 
  limit mixed Hodge structures.

  \begin{lem}\label{lem:lim}
   The morphism $\Phi_i$ defined in \eqref{eq:lim-corr} from 
   $H^{2n-i}(\mc{X}_\infty, \mb{Q})$ to $H^{2c-i}(\mc{Y}_\infty, \mb{Q})$ is a morphism 
   of mixed Hodge structures, when they are both equipped 
   with the limit mixed Hodge structures.
  \end{lem}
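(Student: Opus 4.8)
The plan is to verify the two defining compatibilities of a morphism of limit mixed Hodge structures separately — compatibility with the limit Hodge filtration and with the limit weight filtration — by exploiting the one structural fact available: after pulling back to $\mf{h}$ and trivialising the local systems, $\Phi_i$ is a \emph{constant} morphism of local systems, so it commutes with everything manufactured out of the monodromy. Note first that $\Phi_i$ is $\mb{Q}$-linear, since it is induced by the rational class $[\mc{Z}]_i$, so all the statements below make sense.

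First I would record three compatibilities. (i) From diagram \eqref{diag:corr}, $\Phi_i \circ T^{(\pi_1)} = T^{(\pi_2)} \circ \Phi_i$; a Jordan decomposition argument (the graph of $\Phi_i$ is an invariant subspace of $T^{(\pi_1)} \oplus T^{(\pi_2)}$, hence invariant under its unipotent part) shows $\Phi_i$ also intertwines the unipotent parts, and therefore, passing to logarithms, $\Phi_i \circ N^{(\pi_1)} = N^{(\pi_2)} \circ \Phi_i$; in particular $\Phi_i \circ \exp(-uN^{(\pi_1)}) = \exp(-uN^{(\pi_2)}) \circ \Phi_i$ for all $u \in \mf{h}$ (compare Appendix \ref{app:mon}). (ii) By the construction of $\Phi_i$ from the global section $[\mc{Z}]_i$, whose value at $u$ is the K\"unneth component of $[\mc{Z}_{e(u)}]$, one has $i_u^* \circ \Phi_i = \Phi_{i,e(u)} \circ i_u^*$ for every $u \in \mf{h}$, equivalently $\Phi_i \circ (i_u^*)^{-1} = (i_u^*)^{-1} \circ \Phi_{i,e(u)}$. (iii) Writing $r := c - n$, the classical fact that an algebraic correspondence of codimension $c$ on $\mc{X}_t \times \mc{Y}_t$ induces a morphism of pure Hodge structures of type $(r,r)$ gives, for each $t \in \Delta^*$, that $\Phi_{i,t}$ sends $F^k H^{2n-i}(\mc{X}_t,\mb{C})$ into $F^{k+r} H^{2c-i}(\mc{Y}_t,\mb{C})$ and shifts weights by $2r$.

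For the Hodge filtration I would then apply $\Phi_i$ termwise to the defining limit
\[F^k H^{2n-i}(\mc{X}_\infty,\mb{C}) = \lim_{\Ima u \to \infty} \exp(-uN^{(\pi_1)})\,(i_u^*)^{-1}\bigl(F^k H^{2n-i}(\mc{X}_{e(u)},\mb{C})\bigr).\]
Using (i), (ii), (iii) in turn, the image under $\Phi_i$ of the $u$-th subspace is contained in $\exp(-uN^{(\pi_2)})(i_u^*)^{-1}(F^{k+r} H^{2c-i}(\mc{Y}_{e(u)},\mb{C}))$, and these subspaces converge to $F^{k+r} H^{2c-i}(\mc{Y}_\infty,\mb{C})$ as $\Ima u \to \infty$. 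Since $\Phi_i$ is a fixed linear map between finite-dimensional spaces it is continuous, and since it maps each member of the first convergent family into the corresponding member of the second, the limit of the first is mapped into the limit of the second; hence $\Phi_i\bigl(F^k H^{2n-i}(\mc{X}_\infty,\mb{C})\bigr) \subseteq F^{k+r} H^{2c-i}(\mc{Y}_\infty,\mb{C})$.

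For the weight filtration I would use that the limit weight filtrations on $H^{2n-i}(\mc{X}_\infty,\mb{Q})$ and $H^{2c-i}(\mc{Y}_\infty,\mb{Q})$ are the monodromy weight filtrations of $N^{(\pi_1)}$ and $N^{(\pi_2)}$ relative to the respective cohomological degrees, i.e.\ the unique filtrations characterised by the two conditions recalled in \S\ref{sec:lhf}. The key point is that such a filtration admits an explicit description entirely in terms of the kernels and images of the powers of $N$, with the exponents appearing in the formula for $W_\ell$ depending only on $\ell$ minus the center. Because $\Phi_i$ commutes with $N$, it carries $\ker (N^{(\pi_1)})^j$ into $\ker (N^{(\pi_2)})^j$ and $\Ima (N^{(\pi_1)})^j$ into $\Ima (N^{(\pi_2)})^j$ for every $j$; comparing the two formulas with the centers shifted by $2r$ yields $\Phi_i\bigl(W_\ell H^{2n-i}(\mc{X}_\infty,\mb{Q})\bigr) \subseteq W_{\ell+2r} H^{2c-i}(\mc{Y}_\infty,\mb{Q})$. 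Combining the three steps, $\Phi_i$ is a morphism of mixed Hodge structures of type $(r,r)$, which is the assertion up to the evident Tate twist. I expect the only genuinely delicate step to be this last one: the uniqueness characterisation of the monodromy weight filtration pins down each filtration individually but does not by itself produce the comparison map, so I would argue through the explicit $\ker$/$\Ima$ description (equivalently, through the relative monodromy filtration formalism) in order to obtain the shift by $2r$ cleanly.
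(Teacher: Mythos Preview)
Your proof is correct and follows the same route as the paper: commutativity with $N$ together with Morrison's explicit $\ker$/$\Ima$ description for the weight filtration, and the limit definition plus the fibrewise pure-Hodge-structure property (the paper cites \cite[Lemma~11.41]{v4}) for the Hodge filtration. Your treatment is in fact more careful than the paper's in two places---you spell out the Jordan-decomposition step needed to pass from commuting with $T$ to commuting with the unipotent part and hence with $N$, and you correctly track the type-$(r,r)$ shift with $r=c-n$, whereas the paper's proof simply says the filtrations are ``preserved.''
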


  \begin{proof}
     The commutative diagram \eqref{diag:corr} implies that monodromy commutes with the morphism 
   $\Phi_i$ i.e., $\Phi_i \circ T^{(\pi_1)} = T^{(\pi_2)} \circ \Phi_i$, where 
   $T^{(\pi_1)}$ (resp. $T^{(\pi_2)}$) is the monodromy operator associated to the family 
   $\pi_1$ (resp. $\pi_2$). Denote by 
   $N^{(\pi_1)}:= \log(T^{(\pi_1)})$ and $N^{(\pi_2)}:= \log(T^{(\pi_2)})$. The commutativity with 
   the monodromy operators induces the equality:
   \begin{equation}\label{eq:comm-mon}
    \Phi_i \circ N^{(\pi_1)} = N^{(\pi_2)} \circ \Phi_i.
   \end{equation}
     Recall from \cite[p. $106$]{morri}, the weight filtration on $H^k(\mc{X}_\infty, \mb{Q})$
   is defined inductively as follows: \[W_0H^k(\mc{X}_\infty):= \mr{Im}((N^{(\pi_1)})^k),\, \,  
   W_{2k-1} H^k(\mc{X}_\infty):= \ker((N^{(\pi_1)})^k).\]
   If for some fixed $l<k$, we have already 
   defined 
   \[ 0 \subset W_{l-1} \subset W_{2k-l} \subset W_{2k}=H^{2k}(\mc{X}_\infty) \]
   then define 
   \[W_l:= \mr{Im}((N^{(\pi_1)})^{k-l}|_{W_{2k-l}})+W_{l-1}\, \mbox{ and } W_{2m-l-1}:= \ker((N^{(\pi_1)})^{k-l}|_{W_{2k-l}}) + W_{l-1}.\]
   The weight filtration on $H^k(\mc{Y}_\infty, \mb{Q})$ is defined similarly with 
   $N^{(\pi_1)}$ substituted by $N^{(\pi_2)}$. Using the equality \eqref{eq:comm-mon} we conclude that the (limit) weight filtration is preserved under $\Phi_i$.
   
   Recall from \S \ref{sec:lhf} that the limit Hodge filtration is defined as:
   \[F^k H^{2n-i}(\mc{X}_\infty, \mb{C}):=
 \lim_{\Ima(s) \to \infty} \mr{exp}(2\pi \sqrt{-1} sN^{(\pi_1)})F^k_s 
 H^{2n-i}(\mc{X}_\infty, \mb{C}),\]
 where $F^k_s$ is the Hodge filtration induced by that on $H^{2n-i}(\mc{X}_s, \mb{C})$.
 The Hodge filtration on $H^{2c-i}(\mc{Y}_\infty)$ is defined similarly with $N^{(\pi_1)}$ replaced
 by $N^{(\pi_2)}$.
 By \cite[Lemma $11.41$]{v4}, the restriction of the morphism $\Phi$ to $\mc{X}_s$ induces a morphism of 
 pure Hodge structures from $H^{2n-i}(\mc{X}_s, \mb{C})$ to $H^{2c-i}(\mc{Y}_s, \mb{C})$.
 Finally, using the equality \eqref{eq:comm-mon} we conclude that the Hodge filtration is 
 preserved under $\Phi_i$. This proves the lemma.
  \end{proof}

  Using this we can now produce new Mumford-Tate families.

\begin{thm}\label{thm:lim-mt}
 Fix integers $c \ge 0$ and $p \ge 0$. Consider a smooth, projective family 
  \[\pi: \mc{X} \to \Delta^*\,  \mbox{ of relative dimension } n.\]
  Suppose there exists a finite number of smooth, projective families:
  \begin{equation}\label{eq:fam}
    \pi_i: \mc{Y}_i \to \Delta^*\, \mbox{ of relative dimension } m_i
  \end{equation}
  and closed subschemes $\mc{Z}_i \subset \mc{X} \times_{\Delta^*} \mc{Y}_i$ 
  which are flat over $\Delta^*$ and for every $t \in \Delta^*$, the fiber 
  $\mc{Z}_{i,t}$ over $t$ is of codimension $c$ in $\mc{X}_t \times \mc{Y}_{i,t}$
 such that the induced morphism as in \eqref{eq:lim-corr}:
 \begin{equation}\label{eq:fam2}
     \bigoplus_i \Phi_{p,t}^{(i)}: \bigoplus_i H^{2m_i-2p}(\mc{Y}_{i, t}, \mb{Q}) \to H^{2c-2p}(\mc{X}_t, \mb{Q})
 \end{equation}
  is surjective, for general $t \in \Delta^*$.
  If the families $\pi_i$ in \eqref{eq:fam} are Mumford-Tate of 
  weight $2m_i-2p$, then $\pi$ is a Mumford-Tate family of weight $2c-2p$.
  \end{thm}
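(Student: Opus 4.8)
The plan is to exploit the correspondence morphisms $\Phi_p^{(i)}$, which by Lemma \ref{lem:lim} are morphisms of limit mixed Hodge structures, together with the commutativity between these morphisms and the monodromy operators (equation \eqref{eq:comm-mon}), to transport the Mumford--Tate property from each $\pi_i$ to $\pi$. First I would assemble, for general $t \in \mf{h}$, the commutative square relating $\bigoplus_i H^{2m_i-2p}_{\mr{Hdg}}(\mc{Y}_{i,t})^{\mr{inv}}$ and $\bigoplus_i H^{2m_i-2p}_{\mr{Hdg}}(\mc{Y}_{i,\infty})^{\mr{inv}}$ on the source side (where the horizontal arrow is an isomorphism by the hypothesis that each $\pi_i$ is Mumford--Tate of weight $2m_i-2p$), and $H^{2c-2p}_{\mr{Hdg}}(\mc{X}_t)^{\mr{inv}}$ and $H^{2c-2p}_{\mr{Hdg}}(\mc{X}_\infty)^{\mr{inv}}$ on the target side, with the vertical arrows given by $\bigoplus_i \Phi_{p,t}^{(i)}$ and $\bigoplus_i \Phi_{p}^{(i)}$ respectively. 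Since $\Phi_p^{(i)}$ is a morphism of limit mixed Hodge structures and commutes with $N$, it sends $H^{2m_i-2p}_{\mr{Hdg}}(\mc{Y}_{i,\infty})^{\mr{inv}}$ into $H^{2c-2p}_{\mr{Hdg}}(\mc{X}_\infty)^{\mr{inv}}$, so the right vertical arrow is well-defined; commutativity of the square follows from the fact that $\Phi_p^{(i)}$ is induced by a global section of the local system and hence is compatible with the identifications $i_t^*$.

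Next I would run the diagram chase. The left vertical arrow $\bigoplus_i \Phi_{p,t}^{(i)}$ is surjective onto $H^{2c-2p}_{\mr{Hdg}}(\mc{X}_t)^{\mr{inv}}$: indeed by hypothesis \eqref{eq:fam2} the map $\bigoplus_i \Phi_{p,t}^{(i)}$ is surjective on the full cohomology $H^{2c-2p}(\mc{X}_t,\mb{Q})$ for general $t$, and since it is a morphism of pure Hodge structures compatible with monodromy, restricting to the invariant Hodge part on both sides preserves surjectivity (any invariant Hodge class in the target is the image of some class, and one can project that class onto the invariant Hodge part of the source using the semisimplicity of the monodromy action on the relevant graded piece together with the fact that morphisms of Hodge structures are strict). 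Then in the commutative square, the composite (top horizontal) $\circ$ (left vertical) equals (right vertical) $\circ$ (bottom horizontal); the top horizontal is an isomorphism and the left vertical is surjective, so the composite is surjective, hence the right vertical arrow $\bigoplus_i \Phi_p^{(i)}: \bigoplus_i H^{2m_i-2p}_{\mr{Hdg}}(\mc{Y}_{i,\infty})^{\mr{inv}} \to H^{2c-2p}_{\mr{Hdg}}(\mc{X}_\infty)^{\mr{inv}}$ is surjective. Finally, chasing the square the other way, the bottom horizontal arrow $H^{2c-2p}_{\mr{Hdg}}(\mc{X}_t)^{\mr{inv}} \to H^{2c-2p}_{\mr{Hdg}}(\mc{X}_\infty)^{\mr{inv}}$ receives the surjection $\bigoplus_i \Phi_{p,t}^{(i)}$ on the left precomposed, and factors the surjective composite $(\text{top}) \circ (\text{left})$ followed by... more precisely, since $(\text{right vertical}) \circ (\text{bottom horizontal}) = (\text{top horizontal}) \circ (\text{left vertical})$ is surjective, the bottom horizontal arrow is surjective onto $H^{2c-2p}_{\mr{Hdg}}(\mc{X}_\infty)^{\mr{inv}}$, which is exactly the assertion that $\pi$ is Mumford--Tate of weight $2c-2p$.

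The main obstacle I anticipate is the bookkeeping around the phrase ``general $t$'': the surjectivity of \eqref{eq:fam2} holds only away from a finite set (or proper closed subset), and one must check that the extension of Hodge classes on $\mc{X}_t$ to monodromy-invariant classes on $\mc{X}_\infty$, as well as the compatibility of $\Phi_{p,t}^{(i)}$ with $\Phi_p^{(i)}$ under $i_t^*$, is valid for general $t$ in this sense — this is essentially the content already used in the proof of Proposition \ref{prop04}, so I would cite that. A secondary subtlety is verifying that passing from surjectivity of $\bigoplus_i \Phi_{p,t}^{(i)}$ on full cohomology to surjectivity on the invariant Hodge sub-spaces is legitimate; this uses that $\Phi_{p,t}^{(i)}$ is a morphism of Hodge structures (strictness) and that the monodromy action is compatible, and I would spell this out carefully since it is the one genuinely non-formal point. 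Everything else is diagram chasing and invocations of Lemma \ref{lem:lim} and the definition of Mumford--Tate family in \S \ref{sec:mt-def}.
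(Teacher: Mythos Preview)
Your strategy is the same as the paper's: use that each $\Phi_p^{(i)}$ is a morphism of limit mixed Hodge structures (Lemma~\ref{lem:lim}) commuting with $N$, combine this with the Mumford--Tate hypothesis on the $\pi_i$, and push a limit invariant Hodge class on $\mc{X}_\infty$ back to an invariant Hodge class on a general fiber. The paper does this by an explicit element chase: given $\gamma\in H^{2c-2p}_{\mr{Hdg}}(\mc{X}_\infty)^{\mr{inv}}$, it lifts $\gamma$ along the surjection $\bigoplus_i\Phi_p^{(i)}$ (using strictness of MHS morphisms) to some $\gamma'\in\bigoplus_i H^{2m_i-2p}_{\mr{Hdg}}(\mc{Y}_{i,\infty})$, argues $\gamma'$ may be taken monodromy invariant, applies the Mumford--Tate property of the $\pi_i$ to get $j_t^*(\gamma')$ Hodge and invariant on the fiber, and then observes $\Phi_{p,t}^{(i)}(j_t^*(\gamma'))=(j'_t)^*(\gamma)$.

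Your write-up of the diagram chase, however, is garbled: you repeatedly compose arrows that do not match (e.g.\ ``$(\text{top horizontal})\circ(\text{left vertical})$'' when both emanate from the same corner), and the attempted deduction of surjectivity of the right vertical from surjectivity of the left vertical plus the top isomorphism is circular --- it implicitly uses surjectivity of the bottom horizontal, which is exactly the conclusion. What actually works (and what the paper does) is to establish surjectivity of the \emph{right} vertical arrow $\bigoplus_i\Phi_p^{(i)}$ on invariant limit Hodge classes directly, via strictness plus the monodromy commutation \eqref{eq:comm-mon}; then $(\text{right})\circ(\text{top})=(\text{bottom})\circ(\text{left})$ is surjective, whence the bottom arrow is surjective. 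Your ``secondary subtlety'' (lifting an invariant Hodge class to an invariant Hodge preimage) is indeed the one non-formal step, and it is precisely the step the paper isolates as ``if $\gamma$ is monodromy invariant, $\gamma'$ can be assumed to be monodromy invariant''; you are right to flag it, and your proposed justification (strictness plus compatibility with monodromy) is the intended one.
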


  Before we prove the theorem we wanted remark that not all $\mc{Y}_i$'s need to be distinct.

  \begin{proof}
  Consider the following commutative diagram:
 \[\begin{diagram}
      \bigoplus_i H^{2m_i-2p}(\mc{Y}_{i, \infty}, \mb{Q}) &\rTo^{\bigoplus_i \Phi_p^{(i)}}&H^{2c-2p}(\mc{X}_\infty, \mb{Q})\\
     \dTo^{j_t^*}&\circlearrowleft&\dTo_{(j'_t)^*}\\
        \bigoplus_i H^{2m_i-2p}(\mc{Y}_{i,t}, \mb{Q})&\rTo^{\bigoplus_i \Phi_{p,t}^{(i)}}&H^{2c-2p}(\mc{X}_t, \mb{Q})    
    \end{diagram}\]
where $j_t$ (resp. $j'_t$) is the natural inclusion of $\mc{X}_t$ (resp. 
$\mc{Y}_t$) into $\mc{X}_\infty$ (resp. $\mc{Y}_\infty$). By Ehresmann's lemma, the two vertical arrows are isomorphisms.
By assumption, the bottom horizontal morphism is a surjective, morphism of pure Hodge structures.
This implies that the top horizontal map is a surjective morphism of mixed Hodge structures (see Lemma \ref{lem:lim}).
Hence, given $\gamma \in H^{2c-2p}_{\mr{Hdg}}(\mc{X}_\infty, \mb{Q})$, there exists 
\[\gamma' \in \bigoplus_i H^{2m_i-2p}_{\mr{Hdg}}(\mc{Y}_{i, \infty}, \mb{Q})\, \mbox{ such that } \bigoplus_i \Phi_p^{(i)}(\gamma')=\gamma.\]
Moreover by \eqref{diag:corr}, the morphism $\Phi_p^{(i)}$ commutes with the monodromy operator, for all $i$.
As a result, if $\gamma$ is monodromy invariant, $\gamma'$ can be assumed to be monodromy invariant.
Since $\pi_i$ are Mumford-Tate families for all $i$, \[\gamma'_t:= j_t^*(\gamma') \in \bigoplus_i H^{2m_i-2p}_{\mr{Hdg}}(\mc{Y}_{i,t}, \mb{Q})^{\mr{inv}}\]
for general $t \in \mf{h}$.
Since $\Phi_{p,t}^{(i)}$ are morphisms pure Hodge structures, $\oplus \Phi_{p,t}^{(i)}(\gamma'_t) \in H^{2c-2p}_{\mr{Hdg}}(\mc{X}_t,\mb{Q})^{\mr{inv}}$.
 In other words, $(j'_t)^*(\gamma) \in H^{2c-2p}_{\mr{Hdg}}(\mc{X}_t,\mb{Q})^{\mr{inv}}$.
Therefore, $\pi$ is a Mumford-Tate family. This proves the theorem.
  \end{proof}

\begin{exa}[Fano varieties of lines]\label{sec:fano}
We claim that families of Fano varieties of lines associated to families of cubic hypersurfaces in $\mb{P}^n$
are Mumford-Tate, if the former family is Mumford-Tate. More precisely, 
 let $\pi: \mc{X} \to \Delta^*$ be a smooth, projective family of cubic hypersurface in $\mb{P}^n$ and 
 $\pi_F:F(\mc{X}) \to \Delta^*$ the associated Fano variety of lines. In particular, $F(\mc{X})$
 is the universal family associated to a representable functor (see \cite[p. $79$]{huycub}). 
 For every $t \in \Delta^*$, $F(\mc{X})_t$ is the Fano variety of lines on $\mc{X}_t$. 
   The cohomology of $(F(\mc{X}))_t$ is controlled by the cohomology of $\mc{X}_s$ (see \cite[\S $2.4$]{huycub}). 
   Using \cite[Proposition $15$]{latv2} (see also \cite[Chapter $2$, \S $4$]{huycub}), 
   there is a surjective morphism from $H^{2p+2}(\mc{X}_t^{[2]},\mb{Q})$ to $H^{2p}(F(\mc{X}_t),\mb{Q})$
   induced by a correspondence, for all $t \in \Delta^*$ and $p>1$. By Theorem \ref{thm:mt-exa}, $\mc{X}^{[2]}$ is a Mumford-Tate family.
   Then, Theorem \ref{thm:lim-mt} implies that if $\pi$ is a Mumford-Tate family (for example, if $n$ is even), 
   then $F(\mc{X})$ is a Mumford-Tate family. 
\end{exa}

  \begin{exa} [Moduli  spaces of sheaves over surfaces]\label{sec:mod}
  Families of  moduli of sheaves associated to families of K3 surfaces or abelian surfaces are Mumford-Tate families,
  if the corresponding family of surfaces is a Mumford-Tate family. In particular, 
  let \[\pi: \mc{X} \to \Delta^*\] be a smooth family of K3 surfaces or abelian surfaces.
   Fix a positive integer $r>1$, relative Chern class of a line bundle on the family $c_1 \in \Gamma(R^2\pi_*\mb{Z})$
   and a global section $c_2 \in \Gamma(R^4 \pi_*\mb{Z})$. Suppose that there exists $t \in \Delta^*$ such that the for the 
   corresponding value $c_1(t) \in H^2(\mc{X}_t,\mb{Z})$, we have $\mr{gcd}(r,c_1.c_1(\mo_{\mc{X}_t}))=1$. 
   Since intersection product is preserved in flat families, this implies $\mr{gcd}(r,c_1.c_1(\mo_{\mc{X}_t}))=1$
   for all $t \in \Delta^*$. Then, using \cite[Theorems $4.6.5$ and $6.1.8$]{huy}, we conclude that there exists 
   a smooth, projective  family \[\pi(r,c_1,c_2): \mc{M}(r,c_1,c_2) \to \Delta^*\]
   such that for every $t \in \Delta^*$, the fiber over $t$ is the (non-singular) moduli space 
   of semi-stable sheaves on $\mc{X}_t$ with Chern character $r+c_1+c_2$. Moreover, there exists 
 a relative universal family $\mc{U}$ over $\mc{M}(r,c_1,c_2) \times_{\Delta^*} \mc{X}$.
 Note that, $H^1(\mc{X}_\infty, \mb{Z})=0=H^3(\mc{X}_\infty, \mb{Z})$. 
By \cite[Corollary $2$]{mark1}, 
 \[\bigoplus_{j=0}^2 \Phi_{2j,t}^{(i+j)}:  \bigoplus_{j=0}^2 H^{4-2j}(\mc{X}_t, \mb{Z}) \to H^{2i}(\mc{M}(r,c_1,c_2)_t, \mb{Z})\, 
 \mbox{ is surjective } \]
 where $\Phi_{2j,t}^{(i+j)}$ is the correspondence induced by the $i$-th Chern class $c_i(\mc{U}_t)$ of the universal bundle restricted to the 
 fiber over $t \in \Delta^*$. Then, 
Theorem \ref{thm:lim-mt} implies that if $\pi$ is a Mumford-Tate family, then so is the family $\pi(r,c_1,c_2)$ of moduli spaces of sheaves.
 \end{exa}

\section{Examples of (singular) Hodge conjecture}\label{sec:exa}
In this section we apply the above results to produce new examples of singular 
projective varieties satisfying the singular Hodge conjecture (Theorems \ref{th01} and \ref{thm:final}) and Jannsen's conjecture (Corollary \ref{cor:jann}).
Resolutions of these singular varieties often satisfy the Hodge conjecture (Theorem \ref{thm:final}). 

\subsection{SHC for simple normal crossings divisor}\label{sec:sing-coh}
Consider a flat, projective morphism 
$\pi: \mc{X} \to \Delta$ of relative dimension $n$, 
smooth over $\Delta^*$ and the central fiber $\mc{X}_0$ is a reduced simple normal crossings divisor. 
Note that, the cycle class map $\mr{cl}_p$ from $A^p(\mc{X}_0)$ to 
 $\mr{Gr}^W_{2p} H^{2p}(\mc{X}_0,\mb{Q})$ factors through 
 \[H^{2p}_{\mr{Hdg}}(\mc{X}_0):= F^p \mr{Gr}^W_{2p} H^{2p}(\mc{X}_0,
 \mb{C}) \cap \mr{Gr}^W_{2p} H^{2p}(\mc{X}_0,\mb{Q}).\]
 This follows from comparing the Mayer-Vietoris sequences associated to the 
 operational Chow group (see Theorem \ref{th:Chow}) and that of the cohomology groups. 
 Denote by $H^{2p}_A(\mc{X}_0)$ the image of the cycle class map $\mr{cl}_p$.
We prove:

\begin{thm}\label{th01}
 Suppose that $\pi$ is a Mumford-Tate family. Let $p$ be a positive integer such that a general fiber of $\pi$ satisfies $\mr{HC}(p)$.
 If the intersection of any two irreducible component of $\mc{X}_0$ satisfies $\mr{HC}(p-1)$,
 then $\mc{X}_0$ satisfies $\mr{SHC}(p)$. 
\end{thm}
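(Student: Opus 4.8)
My plan is to prove directly that the cycle class map $\mr{cl}^p\colon A^p(\mc{X}_0)\to H^{2p}_{\mr{Hdg}}(\mc{X}_0)$ is surjective, i.e.\ that $H^{2p}_A(\mc{X}_0)=H^{2p}_{\mr{Hdg}}(\mc{X}_0)$. The backbone will be the exact sequence of Proposition \ref{prop01} with the indices denoted $(p,q)$ there specialized to $(0,2p)$:
\[
H^{2p-2}(X(2),\mb{Q})\xrightarrow{g}E_2^{0,2p}\xrightarrow{\phi}{}^{\infty}E_2^{0,2p},
\]
where $g$ is the Gysin morphism, $E_2^{0,2p}=\mr{Gr}^W_{2p}H^{2p}(\mc{X}_0,\mb{Q})$ sits inside $H^{2p}(X(1),\mb{Q})$ as the kernel of the alternating restriction, and ${}^{\infty}E_2^{0,2p}=\mr{Gr}^W_{2p}H^{2p}(\mc{X}_\infty,\mb{Q})$ by Theorem \ref{thm:spec}. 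Since $\phi$ is the weight-$2p$ graded piece of the specialization morphism of mixed Hodge structures, it is a morphism of pure Hodge structures, so it carries $H^{2p}_{\mr{Hdg}}(\mc{X}_0)$ into the Hodge classes of $\mr{Gr}^W_{2p}H^{2p}(\mc{X}_\infty,\mb{Q})$. Given an arbitrary $\gamma\in H^{2p}_{\mr{Hdg}}(\mc{X}_0)$, I will produce $z_1,z_2\in A^p(\mc{X}_0)$ with $\gamma=\mr{cl}^p(z_1)+\mr{cl}^p(z_2)$, treating the image of $\phi$ with the Mumford--Tate hypothesis and the kernel of $\phi$ with $\mr{HC}(p-1)$ on the pairwise intersections of the components.

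\emph{Handling the image of $\phi$.} First I would note that $\mr{sp}(\gamma)$ is monodromy invariant, since the image of the specialization morphism is contained in $\ker N$ (local invariant cycle theorem); hence $\phi(\gamma)\in H^{2p}_{\mr{Hdg}}(\mc{X}_\infty)^{\mr{inv}}$. As $\pi$ is Mumford--Tate and a general fibre satisfies $\mr{HC}(p)$, Proposition \ref{prop04} identifies $H^{2p}_{\mr{Hdg}}(\mc{X}_\infty)^{\mr{inv}}$ with the limit algebraic cohomology group $H^{2p}_A(\mc{X}_\infty)$, so $\phi(\gamma)$ is the image in $\mr{Gr}^W_{2p}$ of $\sum_i a_i\gamma_{\mc{Z}_i}$ for some $a_i\in\mb{Q}$ and $\Delta^*$-flat subschemes $\mc{Z}_i\subset\mc{X}_{\Delta^*}$ of relative codimension $p$. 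Because $\mc{X}_0$ is a simple normal crossings divisor the total space $\mc{X}$ is smooth, so the Zariski closures $\overline{\mc{Z}_i}\subset\mc{X}$ define classes in $A^p(\mc{X})=\CH^p(\mc{X})$, and I would set $z_1:=\sum_i a_i\,[\overline{\mc{Z}_i}]|_{\mc{X}_0}\in A^p(\mc{X}_0)$ (restriction along $\mc{X}_0\hookrightarrow\mc{X}$). Using naturality of $\mr{cl}^{\bullet}$ (Theorem \ref{cor:cyc}), the fact that $\gamma_{\mc{Z}_i}$ is the pull-back to $\mc{X}_\infty$ of $\mr{cl}^p[\overline{\mc{Z}_i}]\in H^{2p}(\mc{X},\mb{Q})$ (both are monodromy invariant and agree on a general fibre), and the compatibility of $\mr{sp}$ with restriction of cohomology classes from $\mc{X}$, one obtains $\mr{sp}(\mr{cl}^p(z_1))=\sum_i a_i\gamma_{\mc{Z}_i}$, hence $\phi(\mr{cl}^p(z_1))=\phi(\gamma)$ and $\gamma-\mr{cl}^p(z_1)\in\ker\phi$.

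\emph{Handling the kernel of $\phi$.} By Proposition \ref{prop01}, $\ker\phi=\mr{Im}(g)$, so $\gamma-\mr{cl}^p(z_1)=g(\delta)$ for some $\delta\in H^{2p-2}(X(2),\mb{Q})$. Since $g$ shifts the Hodge bidegree by $(1,1)$, the image $\mr{Im}(g)\subset E_2^{0,2p}$ is a rational Hodge substructure and therefore splits off as a direct summand; so $\gamma-\mr{cl}^p(z_1)$, being a rational $(p,p)$-class in $\mr{Im}(g)$, equals $g(\delta')$ for a rational $(p-1,p-1)$-class $\delta'\in H^{2p-2}_{\mr{Hdg}}(X(2))$. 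Now $X(2)$ is the disjoint union of the pairwise intersections $X_i\cap X_j$, which are smooth (by the simple normal crossings hypothesis) and satisfy $\mr{HC}(p-1)$ by assumption, so $\delta'=\mr{cl}^{p-1}(W)$ for a codimension $p-1$ cycle $W$ on $X(2)$. Pushing $W$ forward through the Chow-theoretic Gysin $g_{\CH}\colon A^{p-1}(X(2))\to A^p(X(1))$ (a signed sum of proper push-forwards along the regular embeddings $X(2)\hookrightarrow X(1)$, compatible with $g$ via $\mr{cl}^{\bullet}$), the operational-Chow analogue of Lemma \ref{lem:zero} — whose proof goes through verbatim, using only the self-intersection formula and that a double intersection lies on exactly two components — shows that $g_{\CH}(W)$ lies in the kernel of the alternating restriction $A^p(X(1))\to A^p(X(2))$. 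By the Mayer--Vietoris sequence for operational Chow groups (Theorem \ref{th:Chow}) this kernel is the image of $A^p(\mc{X}_0)$, so $g_{\CH}(W)$ comes from some $z_2\in A^p(\mc{X}_0)$, and then $\mr{cl}^p(z_2)=g(\mr{cl}^{p-1}(W))=\gamma-\mr{cl}^p(z_1)$ (an equality it suffices to check in $H^{2p}(X(1),\mb{Q})$, into which $H^{2p}_{\mr{Hdg}}(\mc{X}_0)$ injects). Thus $\gamma=\mr{cl}^p(z_1+z_2)\in H^{2p}_A(\mc{X}_0)$, and since $\gamma$ was arbitrary, $\mc{X}_0$ satisfies $\mr{SHC}(p)$.

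\emph{Main obstacle.} I expect the delicate step to be the package of compatibilities used in treating the image of $\phi$: that the Zariski closure in $\mc{X}$ of a relative cycle restricts to a well-defined class in the operational Chow group of the singular variety $\mc{X}_0$, and that the cycle class of this restriction, after specialization and passage to $\mr{Gr}^W_{2p}$, is precisely the limit cycle class $\gamma_{\mc{Z}}$. Making this precise forces one to reconcile the specialization morphism — constructed from the comparison of the Steenbrink and Mayer--Vietoris complexes — with Fulton-style refined intersection operations on the possibly non-transverse intersection $\overline{\mc{Z}}\cap\mc{X}_0$, and it is here that the argument must be carried out with the most care.
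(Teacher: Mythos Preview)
Your proof is correct and follows essentially the same strategy as the paper: the exact sequence of Proposition~\ref{prop01} at $(0,2p)$, Proposition~\ref{prop04} for the image of $\phi$, and $\mr{HC}(p-1)$ on $X(2)$ for the kernel, assembled as an explicit element chase where the paper uses a four-lemma diagram. For your flagged obstacle, the paper constructs the class in $H^{2p}_A(\mc{X}_0)$ by taking the refined intersection product $\overline{\mc{Z}}\cdot X_i$ on each irreducible component, invoking associativity of intersection products to see these agree on pairwise intersections, and then applying Mayer--Vietoris---which amounts to the same thing as your operational pull-back along $\mc{X}_0\hookrightarrow\mc{X}$.
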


\begin{proof}
By Proposition \ref{prop04} we have a morphism $\mr{sp}_{_A}$ from $H^{2p}_A(\mc{X}_0)$ to $H^{2p}_A(\mc{X}_\infty)$ given by the composition:
\[\mr{sp}_{A}: H^{2p}_A(\mc{X}_0) \hookrightarrow H^{2p}_{\mr{Hdg}}(\mc{X}_0)
 \xrightarrow{\mr{sp}} H^{2p}_{\mr{Hdg}}(\mc{X}_\infty)^{\mr{inv}} \cong 
 H^{2p}_A(\mc{X}_\infty).\]
We claim that $\mr{sp}_{_A}$ is surjective. 
Recall from Definition \ref{defi:lim-alg}, $H^{2p}_A(\mc{X}_\infty)$ is generated as a $\mb{Q}$-vector space by classes $\gamma_{_{\mc{Z}}}$ where $\mc{Z} \subset \mc{X}_{\Delta^*}$ is a $\Delta^*$-flat closed 
subscheme of relative dimension 
$n-p$. 

Denote by $\ov{\mc{Z}}$ the closure of $\mc{Z}$ in $\mc{X}$ and $X_1,...,X_r$ be the irreducible components of $\mc{X}_0$.
By \cite[\S $6.1$]{fult}, the intersection product $\ov{\mc{Z}}.X_i$ of $\ov{\mc{Z}}$
with $X_i$ is of codimension $p$ in $X_i$.
Denote by $\gamma_i \in H^{2p}(X_i,\mb{Q})$ the cohomology class of 
the intersection product $\ov{\mc{Z}}.X_i$ for $1 \le i \le r$.
By the associativity of intersection product 
(see \cite[Proposition $8.1.1$ or Proposition $8.3$]{fult}), for any pair of integers $1 \le i <j \le r$, the image of 
$\gamma_i$ (resp. $\gamma_j$) under the restriction morphisms
from $H^{2p}(X_i, \mb{Q})$ (resp. $H^{2p}(X_j, \mb{Q})$)
to $H^{2p}(X_i \cap X_j, \mb{Q})$ coincides.
By the Mayer-Vietoris sequence, there exists an algebraic cohomology class 
$\gamma \in H^{2p}_A(X)$ such that the image of $\gamma$ under 
the restriction morphism from $H^{2p}_A(X)$ to $H^{2p}_A(X_i)$
is $\gamma_i$ for $1 \le i \le r$. In other words, the cohomology class
of $\ov{\mc{Z}}$ in $H^{2p}(\mc{X}, \mb{Q})$ (see \cite[\S B.$2.9$]{pet})
pulls back to $\gamma$ in $H^{2p}(\mc{X}_0, \mb{Q})$ and to 
the cohomology class $[\mc{Z} \cap \mc{X}_t] \in 
H^{2p}(\mc{X}_t, \mb{Q})$ over $\mc{X}_t$, for any $t \in \Delta^*$.
 Hence, $\mr{sp}_A(\gamma_{_\mc{Z}})=\gamma$. This proves our claim.

% There exists an element 
% $\gamma_{_{\mc{Z}}} \in H^{2p}_A(\mc{X}_\infty)$ such that the 
% pull-back of $\gamma_{_{\mc{Z}}}$ under the inclusion 
% $i_s: \mc{X}_{e(s)} \hookrightarrow \mc{X}_\infty$ is the 
% cohomology class $[\mc{Z} \cap \mc{X}_{e(s)}] \in H^{2p}(\mc{X}_{e(s)},
% \mb{Q})$ (see \S \ref{sec:rel}).
% 
% So, it suffices to show that there exists an element 
% $\gamma \in H^{2p}_A(\mc{X}_0)$ which maps to $\gamma_{_{\mc{Z}}}$
% under the above morphism $\mr{sp}_{_A}$.

Denote by $X(2)$ the disjoint union of all the intersections of any two distinct irreducible components of $\mc{X}_0$.
By Proposition \ref{prop01}, the kernel of the specialization morphism
\[\mr{Gr}^W_{2p}H^{2p}(\mc{X}_0, \mb{Q}) = E_2^{0,2p} \xrightarrow{
 \mr{sp}}\, ^{\infty}E_2^{0,2p}=\mr{Gr}^W_{2p}H^{2p}(\mc{X}_\infty, \mb{Q}) \]
is isomorphic to the image of the Gysin morphism from 
$H^{2p-2}(X(2), \mb{Q})$ to $H^{2p}(X, \mb{Q})$ (as $X(2)$ is non-singular, 
$H^{2p-2}(X(2),\mb{Q})$ has a pure Hodge structure of weight $2p-2$).
By assumption, every irreducible component of 
$X(2)$ satisfies HC$(p-1)$.
Then, we get the following commutative diagram of exact 
sequences:
\[\begin{diagram}
   H^{2p-2}_A(X(2))&\rTo&H^{2p}_A(\mc{X}_0)&\rTo^{\mr{sp}_{_A}}&H^{2p}_A(\mc{X}_\infty)&\rTo&0\\
   \dTo^{\cong}&\circlearrowleft&\dInto&\circlearrowleft&
   \dTo^{\cong}\\
   H^{2p-2}_{\mr{Hdg}}(X(2))&\rTo&H^{2p}_{\mr{Hdg}}(\mc{X}_0)&\rTo^{\mr{sp}}&H^{2p}_{\mr{Hdg}}(\mc{X}_\infty)^{\mr{inv}}&
     \end{diagram}\]
By diagram chase (or using four lemma for the diagram of
exact sequences), we conclude that the middle 
morphism from $H^{2p}_A(\mc{X}_0)$ to $H^{2p}_{\mr{Hdg}}(\mc{X}_0)$ is surjective, hence an isomorphism.
In other words, $\mc{X}_0$ satisfies $\mr{SHC}(p)$. This proves the theorem.
\end{proof}

\subsection{Application to $A_n$ singularities}
Recall, an $A_n$ singularity of dimension $m$, which we denote by $A_n(m)$, is a hypersurface in $\mb{C}^{m+1}$
defined (analytic) locally by the equation  \cite{bur2}:
\[X_1^2+X_2^{n+1}+X_3^2+...+X_{m+1}^2=0,\, \, n \ge 1.\]
The blow-up of the singularity at the origin 
can be covered by affine schemes of the form 
\begin{equation}\label{eq04}
    \bigcup_{i \not= 2}^{m+1} \Spec\left( \frac{\mb{C}[X_i, \frac{X_1}{X_i},...,\frac{X_{m+1}}{X_i}]}{ \sum\limits_{j \not= 2}^{m+1} \left(\frac{X_j}{X_i}\right)^2+X_i^{n-1}\left(\frac{X_2}{X_i}\right)^{n+1}} \right) \cup 
   \Spec\left( \frac{\mb{C}[X_2, \frac{X_1}{X_2}, \frac{X_3}{X_2},..., \frac{X_{m+1}}{X_2}]}{ \sum\limits_{j \not= 2}^{m+1} \left(\frac{X_j}{X_2}\right)^2+X_2^{n-1} }\right)
\end{equation}
Note that, if $n \ge 3$ the first term defines a non-singular variety and the second term has $A_{n-2}(m)$-singularity. 
If $n \in \{1, 2\}$, then the blow-up is non-singular. The exceptional divisor in $\Spec(\mb{C}[X_i,\frac{X_1}{X_i},...,\frac{X_{m+1}}{X_i}])$ is defined by the intersection of $X_i=0$ and 
the zero locus of the quadratic polynomial 
\begin{eqnarray*}
     \sum_{j \not= 2} (X_j/X_i)^2,& \mbox{ if } n \ge 2,\\
     1+\sum_{j \not= 2} (X_j/X_i)^2,& \mbox{ if } n=1.
\end{eqnarray*}
This implies, if $n=1$, then the exceptional divisor is non-singular quadratic hypersurface in $\mb{P}^m$. If $n \ge 2$ and 
$i \not=2$, then the exceptional divisor is again non-singular, defined by a quadratic polynomial. If $i=2$ and $n \ge 2$, then 
the exceptional divisor is a cone over a non-singular quadric. To summarize, 
given a projective variety $X$ of dimension $m$ with at worst $A_n(m)$-singularities, there exists a sequence of blow-ups
\begin{equation}\label{eq:blow}
    \wt{X}:=X_N \xrightarrow{\phi_N} X_{N-1} \xrightarrow{\phi_{N-1}} ... \xrightarrow{\phi_2} X_1 \xrightarrow{\phi_1} X
\end{equation}
  where $\phi_i$ is the blow-up of $X_{i-1}$ at a singular point and $\wt{X}$ is non-singular. 
  Moreover, the exceptional divisor associated to the blow-up map $\phi_i$ is either a projective cone over a quadric hypersurface in $\mb{P}^{m-1}$
  or a  non-singular quadric hypersurface in $\mb{P}^{m}$.
  This implies that the exceptional divisor $E$ associated to the composed blow-up map from $X_N$ to $X$ is a disjoint union of 
  algebraic subsets of the form $E_1 \cup E_2 \cup ... \cup E_r$ where $E_i$ is a 
  blow-up of a projective cone over a quadric hypersurface in $\mb{P}^{m-1}$ or a non-singular quadric in $\mb{P}^{m}$, $E_i \cap E_j=\emptyset$ for $|i-j|>1$ and 
  $E_i \cap E_{i+1}$ is a non-singular quadric hypersurface in $\mb{P}^{m-1}$.

\begin{lem}\label{lem:van-qua}
Let $m>1$ and $Q$ be a projective cone over a non-singular 
quadric hypersurface in $\mb{P}^{m}$. Denote by $\wt{Q}$ the blow-up of $Q$ at the singular point.
Then,  the cycle class 
map from $A^j(\wt{Q})$ to $H^{2j}_{\mr{Hdg}}(\wt{Q})$ is an isomorphism for all $j>0$. 
Furthermore, if $m$ is even, then $A^j(Q) = \mb{Q}\mb{L}^j=H^{2j}_{\mr{Hdg}}(Q)$ for all $j>0$, where $\mb{L}$ is a hyperplane section on $Q$. 
\end{lem}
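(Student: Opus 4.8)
The plan is to compute both Chow groups and Hodge-theoretic data directly using the explicit geometry of $Q$ and $\wt Q$, exploiting that $\wt Q$ is a projective bundle over a smooth quadric and that $Q$ is a cone. First I would analyze $\wt Q$. Let $Q_0\subset\mb P^{m}$ be the non-singular quadric that is the base of the cone $Q\subset\mb P^{m+1}$; the blow-up $\wt Q\to Q$ at the vertex has exceptional divisor $E\cong Q_0$ and in fact $\wt Q$ is the $\mb P^1$-bundle $\mb P(\mo_{Q_0}\oplus\mo_{Q_0}(1))$ over $Q_0$ (the total space of the tautological resolution of the cone). Then I would invoke the projective bundle formula for operational Chow groups (\cite[\S 17]{fult}): $A^*(\wt Q)\cong A^*(Q_0)[\xi]/(\xi^2 + c_1\xi)$ as an $A^*(Q_0)$-module, and the analogous Leray--Hirsch decomposition $H^*(\wt Q)\cong H^*(Q_0)\otimes H^*(\mb P^1)$ in cohomology, compatibly with the cycle class maps. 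Since $Q_0$ is a smooth quadric, $\mr{HC}(j)$ holds for $Q_0$ for all $j$ (its cohomology is generated by the hyperplane class and, in even dimension, by the two rulings, all algebraic — this is classical), so the cycle class map is an isomorphism $A^j(Q_0)\otimes\mb Q\xrightarrow{\sim}H^{2j}_{\mr{Hdg}}(Q_0)$. Feeding this through the two compatible decompositions yields that $A^j(\wt Q)\to H^{2j}_{\mr{Hdg}}(\wt Q)$ is an isomorphism for all $j>0$.

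For the second assertion, assume $m$ even, so $\dim Q = m$ is even and the base quadric $Q_0\subset\mb P^m$ has odd dimension $m-1$; hence $H^*(Q_0,\mb Q)\cong H^*(\mb P^{m-1},\mb Q)$ is one-dimensional in each even degree, spanned by powers of the hyperplane class, with odd cohomology vanishing. I would combine this with a blow-up / Mayer--Vietoris comparison (Appendix \ref{app:op}, together with the excision sequence relating $A^*(Q)$, $A^*(\wt Q)$, $A^*(E)$ and $A^*(\{\text{vertex}\})$) to pin down $A^*(Q)$. Concretely, the blow-down $\wt Q\to Q$ and the exceptional $E\cong Q_0$ give a right-exact (Mayer--Vietoris type) sequence $A^{j-1}(E)\to A^j(Q)\to A^j(\wt Q)$, and dually on the Hodge side $H^{2j-2}_{\mr{Hdg}}(E)\to H^{2j}_{\mr{Hdg}}(Q)\to H^{2j}_{\mr{Hdg}}(\wt Q)$; since $E=Q_0$ has $\mr{HC}$ and $\wt Q$ was just handled, a diagram chase forces $A^j(Q)\to H^{2j}_{\mr{Hdg}}(Q)$ surjective, hence an isomorphism once we check both sides are one-dimensional for $j>0$. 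That both are one-dimensional over $\mb Q$ and spanned by $\mb L^j$ follows because the odd-dimensional base quadric has the cohomology of projective space, so $\mr{Gr}^W_{2j}H^{2j}(Q,\mb Q)$ is spanned by the restriction of the hyperplane class of the ambient $\mb P^{m+1}$, and the hyperplane section $\mb L\in A^1(Q)$ maps to it.

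The main obstacle I anticipate is bookkeeping the low-degree / middle-degree behaviour and making sure nothing is lost at the vertex: the operational Chow group of a cone is subtler than that of a smooth variety, so I need the Mayer--Vietoris / localization sequences for $A^*$ in the form given in Appendix \ref{app:op} to be genuinely exact (or right-exact) in the relevant range, and I must track the compatibility of all these sequences with the cycle class maps into $\mr{Gr}^W_{2\bullet}H^{2\bullet}$. A secondary point requiring care is the even-base-quadric case that is implicitly excluded here (when $m$ is odd, $Q_0$ has even dimension and an extra class from the two rulings appears); but since the ``$A^j(Q)=\mb Q\mb L^j$'' claim is only asserted for $m$ even, I only need the odd-dimensional base quadric, where $H^*(Q_0)$ is as simple as possible — this is what makes the cone computation go through cleanly. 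The identification $\wt Q\cong\mb P(\mo_{Q_0}\oplus\mo_{Q_0}(1))$ and the resulting projective-bundle formulas are the real engine, and verifying that this identification holds (the tautological resolution of a quadric cone) is a short local computation rather than a genuine difficulty.
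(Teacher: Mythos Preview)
Your route to the first assertion, via the identification $\wt Q\cong\mb P(\mo_{Q_0}\oplus\mo_{Q_0}(1))$ and the projective bundle formula (for both Chow groups and cohomology, compatibly with the cycle class map), is correct and genuinely different from the paper's. The paper instead works homologically: it observes that $U=Q\setminus\{\text{vertex}\}$ is an $\mb A^1$-bundle over the base quadric (which the paper calls $Q^c$), giving $A_j(Q)\cong A_j(U)\cong A_{j-1}(Q^c)$ and likewise for Borel--Moore homology; it then feeds the right-exact homological sequences $A_j(E)\to A_j(\wt Q)\to A_j(Q)\to 0$ and $H_{2j}(E)\to H_{2j}(\wt Q)\to H_{2j}(Q)\to 0$ into a commutative diagram and uses Poincar\'e duality on the smooth $\wt Q$ to conclude. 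Your projective-bundle argument is cleaner for $\wt Q$ itself; the paper's affine-bundle argument has the side benefit of computing $A_*(Q)$ and $H_{*}^{\mr{BM}}(Q)$ along the way, which is closer to what is used later.

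For the second assertion your outline is right, but the specific sequence you write, $A^{j-1}(E)\to A^j(Q)\to A^j(\wt Q)$ (and its Hodge analogue), is not available: operational Chow groups do not sit in a long exact sequence of that shape, and Appendix~\ref{app:op} only supplies the left-exact four-term sequence \eqref{eq:op2}. With $X_{\mr{sing}}$ a point this reads, for $j>0$,
\[
0\longrightarrow A^j(Q)\longrightarrow A^j(\wt Q)\longrightarrow A^j(E),
\]
and one checks separately that the last arrow is surjective (the generator of $A^j(E)\cong\mb Q$ is hit by the restriction of the relative $\mo(1)$), which is exactly the paper's sequence \eqref{eq:quad-op}. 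Once you make this correction your argument coincides with the paper's: from your bundle computation $A^j(\wt Q)\cong\mb Q^{\oplus 2}$ and $A^j(E)\cong\mb Q$ one reads off $A^j(Q)=\mb Q\mb L^j$, and the identical cohomological sequence gives $H^{2j}_{\mr{Hdg}}(Q)=\mb Q\mb L^j$.
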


\begin{proof}
    Denote by $U$ the non-singular locus of $Q$ i.e., $U=Q\backslash \{x\}$, where $x \in Q$ is the vertex of the cone $Q$.
    Then, $U$ is an $\mb{A}^1$-bundle over a non-singular quadric in $\mb{P}^{m}$, say $Q^c$. This means $Q^c$ is a deformation retract of $U$.
    Hence,     their cohomology groups coincide.
    Therefore, 
    \[A_j(Q) \cong A_j(U) \cong A_{j-1}(Q^c)\, \, \mbox{ and } H_{2j}^{\mr{BM}}(Q) \cong H_{2j}^{\mr{BM}}(U) \cong H_{2j-2}^{\mr{BM}}(Q^c)\mbox{ for all } j>0\]
   Since $Q^c$ is a non-singular quadric, this means that the cycle class map from $A_j(Q) \cong A_{j-1}(Q^c)$ to $H_{2j}(Q,\mb{Q}) \cong H_{2j-2}(Q^c,\mb{Q})$
   is an isomorphism. 
    Denote by $E$ the exceptional divisor of the blow-up $\wt{Q}$ of $Q$. Note that, $E$ is isomorphic to $Q^c$. 
    We then have the following diagram of exact sequences:
    \begin{equation}\label{eq:diag7}
        \begin{diagram}
A_j(E) &\rTo^{\phi_1}& A_j(\wt{Q})& \rTo& A_j(Q)& \rTo& 0\\
\dTo^{\mr{cl}}_{\cong}&\circlearrowleft&\dTo^{\mr{cl}}&\circlearrowleft&\dTo^{\mr{cl}}_{\cong}\\
H_{2j}(E, \mb{Q})&\rTo^{\phi_2}&H_{2j}(\wt{Q},\mb{Q})&\rTo&H_{2j}(Q,\mb{Q})&\rTo&0
    \end{diagram}
    \end{equation}
    where the surjectivity on the right for the bottom row follows from the fact that 
    the odd cohomology of non-singular quadrics is zero (i.e., $H_{2j-1}(E,\mb{Q})=0$),  
    the first vertical isomorphism follow from the fact that $E$ is a non-singular quadric and the last vertical isomorphism was 
    observed above. By diagram chase, we conclude that the cycle class map from $A_j(\wt{Q})$ to $H_{2j}(\wt{Q},\mb{Q})$ is an isomorphism.
    Since cap product commutes with the cycle class map, this implies that $A^{m-j}(\wt{Q})$ to $H^{2m-2j}(\wt{Q},\mb{Q})$ is an isomorphism.
    This proves the first part of the lemma.

    Now suppose that $m$ is even. Then, $Q^c$ is an odd dimensional quadric. Hence, $A_j(Q^c) \cong \mb{Q}$ for all $j \ge 0$, generated by 
    the $(m-1-j)$-th power of the hyperplane section. 
    This implies that the morphism $\phi_1$ in \eqref{eq:diag7} is injective and $A_j(Q) \cong \mb{Q}$. 
    Using the exact sequence in the top row of \eqref{eq:diag7} we conclude that $A_j(\wt{Q}) \cong \mb{Q}^{\oplus 2}$
    for all $j> 0$. Since $\wt{Q}$ is non-singular, this implies $A^j(\wt{Q}) \cong \mb{Q}^{\oplus 2}$ for $0<j<m$. 
    Using Theorem \ref{th:Chow}, we have the exact sequence:
    \begin{equation}\label{eq:quad-op}
        0 \to A^j(Q) \to A^j(\wt{Q}) \to A^j(E) \to 0
    \end{equation}
    where the surjectivity on the right follows from $A^j(E) \cong \mb{Q}$ generated by the $j$-th power of a hyperplane section. 
    This implies $A^j(Q) = \mb{Q}\mb{L}^j$, where $\mb{L}$ is a hyperplane section on $Q$. Replacing the operational Chow group with the cohomology 
    group, we similarly observe that $H^{2j}_{\mr{Hdg}}(Q)=\mb{Q}\mb{L}^j$.     This proves the lemma.
\end{proof}

\begin{prop}\label{prop:ind}
    Let $X_i$ be as in \eqref{eq:blow}. Suppose that $m$ is odd and the exceptional divisor $Q_i$ associated to the blow-up $\phi_i$
    is a projective cone over a non-singular quadric in $\mb{P}^{m-1}$. If $X_{i-1}$ satisfies $\mr{SHC}(p)$, then so does $X_i$.
\end{prop}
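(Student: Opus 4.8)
The plan is to compare, via the cycle class map, the Mayer--Vietoris sequences attached to the blow-up $\phi_i$, and then to finish by a four-lemma diagram chase. Write $x\in X_{i-1}$ for the singular point that is blown up, so that $\phi_i\colon X_i\to X_{i-1}$ is an isomorphism over $X_{i-1}\setminus\{x\}$ and the exceptional divisor $Q_i=\phi_i^{-1}(x)$ fits into the Cartesian (abstract blow-up) square with vertices $Q_i,\,X_i,\,\{x\},\,X_{i-1}$. Since $m$ is odd, $Q_i$ is a projective cone over a non-singular quadric in $\mb{P}^{m-1}$ of even dimension $m-1$, so the ``$m$ even'' case of Lemma \ref{lem:van-qua} (with its ``$m$'' taken to be $m-1$) applies to $Q_i$ and gives $A^j(Q_i)=\mb{Q}\mb{L}^j=H^{2j}_{\mr{Hdg}}(Q_i)$ for all $j>0$; in particular $\mr{cl}^p\colon A^p(Q_i)\xrightarrow{\sim}H^{2p}_{\mr{Hdg}}(Q_i)$, and, the base quadric having vanishing odd rational cohomology (see the proof of that lemma), also $H^{2p-1}(Q_i,\mb{Q})=0$. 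Since $X_i$ and $X_{i-1}$ have the same dimension $m$ and singular loci of dimension $\le 0$, the hypothesis that $X_{i-1}$ satisfies $\mr{SHC}(p)$ forces $m>p\ge 1>\dim (X_i)_{\mr{sing}}$, so that $\mr{SHC}(p)$ for $X_i$ is a meaningful statement.

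First I would invoke the Mayer--Vietoris exact sequence for operational Chow groups attached to this blow-up square (Theorem \ref{th:Chow}, exactly as in \eqref{eq:quad-op}): since $A^p(\{x\})=0$ for $p\ge 1$, it reads
\[
0\to A^p(X_{i-1})\xrightarrow{\phi_i^*}A^p(X_i)\to A^p(Q_i)\to 0 .
\]
Next I would use the topological Mayer--Vietoris sequence of the same square, a long exact sequence of mixed Hodge structures (valid by proper descent, or directly by excision $H^\ast(X_i,Q_i)\cong H^\ast(X_{i-1},\{x\})$). Using $H^{2p-1}(Q_i,\mb{Q})=0=H^{2p}(\{x\},\mb{Q})$ it yields an exact sequence of mixed Hodge structures
\[
0\to H^{2p}(X_{i-1},\mb{Q})\xrightarrow{\phi_i^*}H^{2p}(X_i,\mb{Q})\to H^{2p}(Q_i,\mb{Q}).
\]
Applying the exact functor $\mr{Gr}^W_{2p}$ and then passing to Hodge classes, one obtains an exact sequence
\[
0\to H^{2p}_{\mr{Hdg}}(X_{i-1})\to H^{2p}_{\mr{Hdg}}(X_i)\to H^{2p}_{\mr{Hdg}}(Q_i).
\]

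By the naturality of the cycle class map (Theorems \ref{cor:cyc} and \ref{th:Chow}), the three maps $\mr{cl}^p$ assemble into a commutative ladder from the Chow sequence to this last one. Its left vertical arrow $\mr{cl}^p\colon A^p(X_{i-1})\to H^{2p}_{\mr{Hdg}}(X_{i-1})$ is surjective because $X_{i-1}$ satisfies $\mr{SHC}(p)$; its right vertical arrow $\mr{cl}^p\colon A^p(Q_i)\to H^{2p}_{\mr{Hdg}}(Q_i)$ is an isomorphism by Lemma \ref{lem:van-qua}; the top row is exact at $A^p(X_i)$ and surjects onto $A^p(Q_i)$; and the bottom row is exact at $H^{2p}_{\mr{Hdg}}(X_i)$. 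The surjectivity form of the four lemma then forces the middle vertical arrow $\mr{cl}^p\colon A^p(X_i)\to H^{2p}_{\mr{Hdg}}(X_i)$ to be surjective, i.e.\ $X_i$ satisfies $\mr{SHC}(p)$, as desired.

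The one delicate point --- and the main, if mild, obstacle --- is the transition from the exact sequence of mixed Hodge structures to the exact sequence of Hodge classes: while $\mr{Gr}^W_{2p}$ is exact on mixed Hodge structures, the formation of Hodge classes $\mr{Hom}_{\mr{HS}}(\mb{Q}(-p),-)$ is only left exact in general, so to preserve exactness at the middle term one uses that polarizable pure Hodge structures form a semisimple abelian category, whence a surjection of such structures splits and induces a surjection on the subspaces of Hodge classes. One must also record that the blow-up Mayer--Vietoris sequence is genuinely a sequence of mixed Hodge structures and that $H^{2p-1}(Q_i,\mb{Q})=0$; both are routine, the latter already being contained in the proof of Lemma \ref{lem:van-qua}.
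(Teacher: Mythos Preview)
Your proof is correct and follows essentially the same route as the paper's: both use the exact sequence \eqref{eq:quad-coh}, the isomorphism $A^p(Q_i)\cong H^{2p}_{\mr{Hdg}}(Q_i)$ from Lemma~\ref{lem:van-qua}, and the surjectivity of $A^p(X_i)\to A^p(Q_i)$ (because $A^p(Q_i)=\mb{Q}\mb{L}^p$), then finish with the same diagram chase --- you phrase it as a four-lemma application, the paper spells it out elementwise. One small correction to your closing caveat: left exactness of $\mr{Hom}_{\mr{HS}}(\mb{Q}(-p),-)$ already means that an exact sequence $0\to A\to B\to C$ of pure Hodge structures is sent to an exact sequence $0\to \mr{Hdg}(A)\to \mr{Hdg}(B)\to \mr{Hdg}(C)$, so exactness at the middle term is automatic and no appeal to semisimplicity is needed.
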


\begin{proof}
   Consider the blow-up $X_i$ of $X_{i-1}$. Denote by $\phi_Q:Q_i \hookrightarrow X_i$ the natural inclusion. 
   Take any $\alpha \in H^{2p}_{\mr{Hdg}}(X_i)$. We need to find an element $\alpha_{\mr{alg}} \in A^p(X_i)$ such that 
   $\mr{cl}(\alpha_{\mr{alg}})=\alpha$.
   Since $m-1$ is even, Lemma \ref{lem:van-qua} implies that 
   the cycle class map from $A^p(Q_i)$ to $H^{2p}_{\mr{Hdg}}(Q_i)$ is an isomorphism.
   Hence, there exists $\beta \in A^p(Q_i)$ such that $\phi_Q^*(\alpha)=\mr{cl}(\beta)$. 
   Moreover, $A^p(Q_i) \cong \mb{Q}$ implies that the restriction morphism $\phi_Q^*$ from $A^p(X_i)$ to $A^p(Q_i)$ is surjective.
   As a result, there exists $\alpha_1 \in A^p(X_i)$ such that $\phi_Q^*(\alpha_1)=\beta$.
   If $\mr{cl}(\alpha_1)=\alpha$, then take $\alpha_{\mr{alg}}:=\alpha_1$ and we are done. 
   If $\mr{cl}(\alpha_1) \not= \alpha$, then $\phi_Q^*(\alpha-\mr{cl}(\alpha_1))=0$. 
   Consider the exact sequence, 
   \begin{equation}\label{eq:quad-coh}
       0 \to H^{2p}_{\mr{Hdg}}(X_{i-1}) \xrightarrow{\phi_i^*} H^{2p}_{\mr{Hdg}}(X_i) \xrightarrow{\phi_Q^*} H^{2p}_{\mr{Hdg}}(Q_i)
   \end{equation}
   Since $X_{i-1}$ satisfies $\mr{SHC}(p)$, there exists $\alpha_2 \in A^p(X_{i-1})$ such that $\phi_i^*(\mr{cl}(\alpha_2))=\alpha-\mr{cl}(\alpha_1)$.
   Using the functoriality of the cycle class map,  we conclude that $\alpha=\mr{cl}(\phi_i^*\alpha_2)+\mr{cl}(\alpha_1)$. 
   In other words, for $\alpha_{\mr{alg}}:= \phi_i^*\alpha_2+\alpha_1$, we have $\mr{cl}(\alpha_{\mr{alg}})=\alpha$. 
   This proves the proposition.   
\end{proof}

  \begin{thm}\label{thm:final}
Let $Y$ be a smooth, projective variety of dimension $2m$ satisfying the Hodge conjecture, 
$X \subset Y$ very ample divisor of $Y$ with at worst $A_n(2m-1)$-singularities. Then, 
\begin{enumerate}
    \item $X$ satisfies $\mr{SHC}(p)$ for all $p>0$ i.e., $X$ satisfies the singular Hodge conjecture,
    \item there exists a resolution of singularities $\wt{X}$ of $X$ (by recursively blowing-up the singular locus)
        such that $\wt{X}$ satisfies $\mr{HC}(p)$ for all $p \ge m$.
        \item Moreover, if $n$ is even, then $\wt{X}$ satisfies $\mr{HC}(p)$ for all $p>0$ i.e., $\wt{X}$ satisfies the Hodge conjecture.        
\end{enumerate}
\end{thm}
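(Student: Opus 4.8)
\emph{Proof plan.} The strategy is to realise $X$ as the central fibre of a Mumford--Tate degeneration whose general fibre inherits the Hodge conjecture from $Y$, to apply Theorem~\ref{th01} to a semistable model, and then to descend the conclusion back to $X$; parts (2) and (3) will then follow from the resolution \eqref{eq:blow}, hard Lefschetz and Proposition~\ref{prop:ind}. For part~(1): since $X$ is very ample in $Y$ I would pick a general Lefschetz pencil in $|X|$ through $X$ and restrict it to a disc, obtaining a flat projective $\pi:\mc{X}\to\Delta$ with $\mc{X}_0=X$, smooth over $\Delta^*$, whose general fibre $\mc{X}_t$ is a smooth ample divisor in $Y$ (the local description after \eqref{eq04} shows that the $A_n$ point of $X$ leaves the total space smooth). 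Because $\dim Y=2m$ is \emph{even}, the family of smooth ample divisors in $Y$ is Mumford--Tate by Theorem~\ref{thm:mt-exa}(1); passing to a semistable model (a base change $s\mapsto s^e$ followed by blow-ups supported over $0\in\Delta$) changes neither $\mc{X}_{\Delta^*}$ nor the general fibre, so the model $\pi':\mc{X}'\to\Delta$ is again Mumford--Tate and its central fibre $\mc{X}'_0$ is a reduced simple normal crossings divisor. For an $A_n$-degeneration $\mc{X}'_0$ is, classically, a chain $V_0\cup V_1\cup\dots\cup V_r$ in which $V_0=\wt X$ is the resolution \eqref{eq:blow} of $X$, each $V_i$ ($i\ge1$) is a blow-up of a projective cone over a quadric or a smooth quadric, $V_i\cap V_j=\emptyset$ for $|i-j|>1$, and $V_i\cap V_{i+1}$ is a smooth quadric. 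By the Lefschetz hyperplane theorem $H^{2p}(\mc{X}_t,\mb{Q})\cong H^{2p}(Y,\mb{Q})$ for $2p<\dim\mc{X}_t$ and, by hard Lefschetz, $H^{2p}(\mc{X}_t,\mb{Q})$ is a power of $\mb{L}$ applied to such a group when $2p>\dim\mc{X}_t$; since $Y$ satisfies the Hodge conjecture, $\mc{X}_t$ satisfies $\mr{HC}(p)$ for all $p$. The intersections $V_i\cap V_{i+1}$ are smooth quadrics and so satisfy $\mr{HC}(p-1)$. Hence Theorem~\ref{th01} applies and $\mc{X}'_0$ satisfies $\mr{SHC}(p)$ for every $p>0$.

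To descend this to $X$, consider the proper birational morphism $g:\mc{X}'_0\to X$ obtained by restricting $\mc{X}'\to\mc{X}$ to central fibres; it is an isomorphism over the smooth locus of $X$, and the fibre $g^{-1}(S)$ over the finite singular locus $S$ is the union of the quadric-type components $V_i$ ($i\ge1$) and the exceptional divisor of $\wt X\to X$. By Lemma~\ref{lem:van-qua} together with the cellular structure of quadrics, the cycle class map is an \emph{isomorphism} on each piece of $g^{-1}(S)$ and on each of their pairwise intersections, all of which have vanishing odd cohomology; comparing the Mayer--Vietoris sequences for operational Chow groups (Theorem~\ref{th:Chow}) and for cohomology then shows that $\mr{cl}^p:A^p(g^{-1}(S))\to H^{2p}(g^{-1}(S),\mb{Q})$ is injective. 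The Mayer--Vietoris sequence of the (abstract) blow-up square $g^{-1}(S)\subset\mc{X}'_0,\ S\subset X$ gives $0\to A^p(X)\to A^p(\mc{X}'_0)\to A^p(g^{-1}(S))$, and, taking $\mr{Gr}^W_{2p}$ and using that $H^{2p-1}(g^{-1}(S))$ has weights $<2p$ (as $\dim g^{-1}(S)<\dim X$), also $0\to\mr{Gr}^W_{2p}H^{2p}(X)\to\mr{Gr}^W_{2p}H^{2p}(\mc{X}'_0)\xrightarrow{\mr{res}}H^{2p}(g^{-1}(S),\mb{Q})$; thus $H^{2p}_{\mr{Hdg}}(X)$ injects into $H^{2p}_{\mr{Hdg}}(\mc{X}'_0)$ as the classes killed by restriction to $g^{-1}(S)$. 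Given $\alpha\in H^{2p}_{\mr{Hdg}}(X)$, write $\alpha=\mr{cl}^p(\tilde\alpha)$ with $\tilde\alpha\in A^p(\mc{X}'_0)$; since $g^{-1}(S)\to S$ factors through the finite set $S$ and $H^{2p}(S,\mb{Q})=0$ for $p>0$, the restriction of $\alpha$ to $g^{-1}(S)$ vanishes, whence $\mr{cl}^p(\tilde\alpha|_{g^{-1}(S)})=0$ and so $\tilde\alpha|_{g^{-1}(S)}=0$ by injectivity; therefore $\tilde\alpha\in A^p(X)$ and $\mr{cl}^p_X(\tilde\alpha)=\alpha$ by functoriality of $\mr{cl}^\bullet$ (Theorem~\ref{cor:cyc}). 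This proves (1) and, with $\mc{X}'_0$ replaced by $\wt X$, the birational invariance of $\mr{SHC}$ for $A_n$ singularities.

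For part~(2): when $p\ge m$ one has $2p>\dim\wt X=2m-1$, so hard Lefschetz gives $H^{2p}(\wt X,\mb{Q})=\mb{L}^{\,2p-2m+1}\cup H^{4m-2-2p}(\wt X,\mb{Q})$ with $4m-2-2p<2m-1$. Running the blow-ups \eqref{eq:blow} through the corresponding Mayer--Vietoris sequences (using that the exceptional divisors have vanishing odd cohomology and isomorphic cycle class maps by Lemma~\ref{lem:van-qua}, and that $H^j(X,\mb{Q})\cong H^j(Y,\mb{Q})$ for $j<\dim X$ by the Lefschetz hyperplane theorem, which is valid for the singular divisor $X$ because $Y\setminus X$ is affine), one checks that every Hodge class of $H^j(\wt X,\mb{Q})$ with $j<\dim\wt X$ is algebraic; cupping with $\mb{L}^{\,2p-2m+1}$ then yields algebraicity of all Hodge classes in $H^{2p}(\wt X,\mb{Q})$ for $p\ge m$, which is (2). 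For part~(3): if $n$ is even, every exceptional divisor $Q_i$ in \eqref{eq:blow} is a projective cone over a quadric, and since $\dim X=2m-1$ is odd, Proposition~\ref{prop:ind} applies at each of the $N$ blow-ups; starting from the statement of (1) that $X=X_0$ satisfies $\mr{SHC}(p)$, one obtains inductively that $X_1,\dots,X_N=\wt X$ all satisfy $\mr{SHC}(p)$, i.e.\ $\wt X$ satisfies $\mr{HC}(p)$ for all $p>0$, which is (3).

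I expect the main obstacle to be the descent step. One must know that the semistable model of an $A_n$-degeneration genuinely has the chain structure used above, so that every component of $\mc{X}'_0$ and of $g^{-1}(S)$, and every pairwise intersection, is one of the quadric-type varieties on which the cycle class map is an isomorphism; this is exactly what Lemma~\ref{lem:van-qua} is there to supply, and it is the point where the evenness of $\dim E_i=2m-2$ — equivalently the oddness of $\dim X$ — enters. One must also verify that the Mayer--Vietoris sequences for the operational Chow groups and for the weight-graded cohomology are genuinely compatible with the cycle class map, so that the diagram chases in the descent and in Proposition~\ref{prop:ind} go through. By contrast, the Mumford--Tate hypothesis needed to invoke Theorem~\ref{th01} is not itself an obstacle here, being provided by Theorem~\ref{thm:mt-exa}(1).
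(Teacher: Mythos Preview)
Your argument for parts (1) and (3) follows the paper's route essentially verbatim: embed $X$ as the central fibre of a one-parameter family of ample divisors in $Y$, pass to a semistable model, invoke Theorem~\ref{thm:mt-exa}(1) and Theorem~\ref{th01}, and then descend to $X$ using the injectivity of the cycle class map on the exceptional pieces furnished by Lemma~\ref{lem:van-qua}. The paper treats $p<m$ separately by a direct appeal to the Lefschetz hyperplane theorem, whereas you run the degeneration argument uniformly in $p$; and in the descent the paper restricts only to the ``new'' components $E_i$ while you restrict to the full fibre $g^{-1}(S)$, but these are cosmetic differences and the diagram chase is the same.

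Your part (2), however, takes a genuinely different route from the paper and has a gap at the endpoint $p=m$. You reduce $\mr{HC}(p)$ for $p\ge m$ via hard Lefschetz to $\mr{HC}(2m-1-p)$ with $2m-1-p\le m-1$, and then you propose to prove $\mr{HC}(p')$ on $\wt X$ for all $p'\le m-1$ by climbing the tower \eqref{eq:blow} with Mayer--Vietoris. That climb is exactly the mechanism of Proposition~\ref{prop:ind}, and it requires at each step that the restriction $A^{p'}(X_i)\to A^{p'}(Q_i)$ be surjective. For $i<N$ the exceptional divisor $Q_i$ is a cone and Lemma~\ref{lem:van-qua} gives $A^{p'}(Q_i)=\mb{Q}\mb{L}^{p'}$, so surjectivity is automatic. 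But when $n$ is odd the last exceptional divisor $Q_N$ is a \emph{smooth} quadric of even dimension $2m-2$, and in its middle degree one has $A^{m-1}(Q_N)\cong\mb{Q}^{2}$: the restriction $A^{m-1}(\wt X)\to A^{m-1}(Q_N)$ certainly hits $\mb{L}^{m-1}$, but there is no reason it should hit the second generator (the difference of the two rulings). Your phrase ``one checks that every Hodge class of $H^{j}(\wt X,\mb{Q})$ with $j<\dim\wt X$ is algebraic'' hides precisely this point at $j=2m-2$, and with it the case $p=m$ of (2) is unproved.

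The paper avoids this by not passing through hard Lefschetz at all: it applies Proposition~\ref{prop:ind} to obtain $\mr{SHC}(p)$ on $X_{N-1}$ for \emph{every} $p$, and then observes that for $p\ge m$ one has $p>\tfrac{1}{2}\dim Q_N$, so that $A^{p}(Q_N)=\mb{Q}\mb{L}^{p}$ is again one-dimensional and the same surjectivity argument as in Proposition~\ref{prop:ind} carries the final step $X_{N-1}\to\wt X$. In other words, the paper works directly above the middle of $Q_N$, where the quadric is harmless, whereas your hard Lefschetz reduction forces you below the middle and straight into the rank-two piece. Replacing your hard Lefschetz step by this direct argument on $A^{p}(Q_N)$ for $p\ge m$ closes the gap and brings your proof in line with the paper's.
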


  \begin{proof}
       Let $X \subset Y$ be as above. Denote by $\wt{X}$ the resolution of $X$ obtained by 
       recursively blowing up the singular locus of $X$. 
       By the Lefschetz hyperplane theorem, we have for $p<m$:
       \[H^{2p}_{\mr{Hdg}}(X) \cong H^{2p}_{\mr{Hdg}}(Y) \cong H^{2p}_A(Y) \to H^{2p}_A(X) \hookrightarrow H^{2p}_{\mr{Hdg}}(X)\]
       is the identity morphism, where the second isomorphism follows from the hypothesis on $Y$. 
       This implies $H^{2p}_A(X) \cong H^{2p}_{\mr{Hdg}}(X)$. Hence, $X$ satisfies $\mr{SHC}(p)$ for $p<m$. 
      
        We now use Theorem \ref{th01} above to prove that $X$ satisfies $\mr{SHC}(p)$ for all $p \ge m$. 
     Embed $X$ in a family of odd dimensional hypersurfaces of $Y$
     \[\pi:\mc{X} \to \Delta\]
     smooth over $\Delta^*$ and central fiber $X$. Then, after base change of the family by the morphism $t \mapsto t^2$, 
     the resulting family has $A_n(2m)$-singularities. As discussed above, we can resolve the singularities by 
     successive blow-ups along the singular locus 
     of the central fiber. In particular, we get a semi-stable family:
     $\pi^{\mr{ss}}: \mc{X}^{\mr{ss}} \to \Delta$
     such that the central fiber $\mc{X}^{\mr{ss}}_0$ is a reduced simple normal crossings divisor of the form 
     $\mc{X}_0^{\mr{ss}}=\wt{X} \cup E$,  where each irreducible component of $E$ is either a non-singular quadric in 
     $\mb{P}^{2m}$ or the blow-up at the vertex of a projective cone over a non-singular quadric in $\mb{P}^{2m-1}$ (the projective cone is of 
     dimension $2m-1$). 
     By Theorem \ref{thm:mt-exa}, $\pi^{\mr{ss}}$ is a Mumford-Tate family. By Lefschetz hyperplane theorem, $\mc{X}_t$ satisfies $\mr{HC}(p)$
     for all $p$ and $t \in \Delta^*$.
     Then, Theorem \ref{th01} implies that $\mc{X}_0^{\mr{ss}}$ satisfies $\mr{SHC}(p)$ for all $p >0$.
     Denote by $E_1,...,E_r$ the irreducible components of $E$. 
     Using Theorem \ref{th:Chow} we have an exact sequence of the form:
     \begin{equation}\label{eq:chow-1}
         0 \to A^p(X) \xrightarrow{\phi^*} A^p(\mc{X}_0^{\mr{ss}}) \xrightarrow{\phi_E^*} \bigoplus_i A^p(E_i)
     \end{equation}
     where $\phi$ (resp. $\phi_E$) is the natural morphism from $\mc{X}_0^{\mr{ss}}$ (resp. $\coprod E_i$) to $X$ (resp. $\mc{X}_0^{\mr{ss}}$). 
     We now show that for any $\alpha \in H^{2p}_{\mr{Hdg}}(X)$, there exists $\alpha_{\mr{alg}} \in A^p(X)$ such that 
     $\mr{cl}(\alpha_{\mr{alg}})=\alpha$. 
     Indeed, since $\mc{X}_0^{\mr{ss}}$ satisfies $\mr{SHC}(p)$, there exists $\beta \in A^p(\mc{X}_0^{\mr{ss}})$ such that 
     $\mr{cl}(\beta)=\phi^*(\alpha)$. Consider the exact sequence (use \cite[Corollary-Definition $5.37$]{pet} along with the Mayer-Vietoris long exact sequence):
     \begin{equation}\label{eq:ex-1}
         0 \to H^{2p}_{\mr{Hdg}}(X) \xrightarrow{\phi^*} H^{2p}_{\mr{Hdg}}(\mc{X}_0^{\mr{ss}}) \xrightarrow{\phi_E^*} \bigoplus_i H^{2p}_{\mr{Hdg}}(E_i)
     \end{equation}
     This implies $\phi_E^*(\phi^*\alpha)=0$. Since the cycle class map from $A^p(E_i)$ to $H^{2p}_{\mr{Hdg}}(E_i)$ is injective (Lemma \ref{lem:van-qua}),
     this implies $\phi_E^*(\beta)=0$. Hence, there exists $\alpha_{\mr{alg}} \in A^p(X)$ such that $\phi^*(\alpha_{\mr{alg}})=\beta$.
     By the injectivity of $\phi^*$ in \eqref{eq:ex-1} along with the functoriality of the cycle class map, 
     we conclude that $\mr{cl}(\alpha_{\mr{alg}})=\alpha$. This proves our claim. Hence, 
     $X$ satisfies $\mr{SHC}(p)$ for all $p \ge 0$, proving $(1)$.

     We now show that $\wt{X}$ satisfies $\mr{HC}(p)$. For simplicity, we assume that 
     $X$ has exactly one $A_n(2m-1)$ singularity. Then, there is a sequence of blow-ups as in \eqref{eq:blow}
     such that the exceptional divisor associated to $\phi_i$ for $i<N$ is a projective cone over a non-singular quadric in $\mb{P}^{2m-2}$
     and the exceptional divisor $Q_N$ associated to $\phi_N$ is either a non-singular quadric in $\mb{P}^{2m-1}$ or 
     a projective cone over a non-singular quadric in $\mb{P}^{2m-2}$. 
     Since $X$ satisfies $\mr{SHC}(p)$, Proposition \ref{prop:ind} implies that $X_i$ satisfies $\mr{SHC}(p)$ for all $p>0$ and $i<N$.
     If $Q_N$ is a projective cone over a quadric, then Proposition \ref{prop:ind} again implies that $X_N=\wt{X}$ satisfies 
     $\mr{HC}(p)$ for all $p>0$. This is the case if $n$ is even (blow-up of an $A_n$ singularity gives an $A_{n-2}$ singularity). This proves $(3)$. 

   To prove $(2)$, we need to show that if $n$ is odd, then $\wt{X}$ satisfies $\mr{HC}(p)$ for all $p \ge m$. Once again for simplicity, 
   we assume that $X$ has only one singular point. As observed above, $X_{N-1}$ satisfies $\mr{SHC}(p)$ for all $p>0$ and the exceptional divisor 
   $Q_N$ associated to the final blow-up $\phi_N$ is a non-singular quadric in $\mb{P}^{2m-1}$. Since $\dim(Q_N)=2m-2$, for $p \ge m$, 
  $A^p(Q_N)=\mb{Q}\mb{L}^p$, where $\mb{L}$ is a hyperplane section in $Q_N$. This implies that the restriction morphism $\phi_Q^*$ from $A^p(\wt{X})$ to 
  $A^p(Q_N)$ is surjective i.e., we have the short exact sequence:
  \[0 \to A^p(X_{N-1}) \xrightarrow{\phi_N^*} A^p(\wt{X}) \xrightarrow{\phi_Q^*} A^p(Q_N) \to 0\, \mbox{ for all } p \ge m.\]
  Then, arguing exactly as in Proposition \ref{prop:ind}, we observe that for any $\alpha \in H^{2p}_{\mr{Hdg}}(\wt{X})$ there exists 
  $\alpha_{\mr{alg}} \in A^p(\wt{X})$ such that $\mr{cl}(\alpha_{\mr{alg}})=\alpha$ i.e., $\wt{X}$ satisfies $\mr{HC}(p)$ for all $p \ge m$.
  This proves $(2)$ and hence the theorem.
      \end{proof}

We denote by $F^{-p}W_{-2p}H^{\mr{BM}}_{2p}(X,\mb{Q}):=F^{-p}H^{\mr{BM}}_{2p}(X,\mb{C}) \cap W_{-2p}H_{2p}^{\mr{BM}}(X,\mb{Q})$. We prove:

 \begin{cor}\label{cor:jann}
     Setup as in Theorem \ref{thm:final}. If $n$ is even, then $X$ satisfies Jannsen's homological Hodge conjecture i.e., the cycle class map from $A_p(X)$ to $F^{-p}W_{-2p}H_{2p}^{\mr{BM}}(X,\mb{Q})$
     is surjective for all $p>0$.
 \end{cor}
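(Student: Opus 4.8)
\emph{Proof proposal.} The plan is to deduce Jannsen's conjecture for $X$ from the classical Hodge conjecture for its resolution $\wt{X}$, which has already been established in Theorem \ref{thm:final}(3) under the hypothesis that $n$ is even. Let $\phi\colon \wt{X}\to X$ be the resolution obtained by recursively blowing up the singular points as in \eqref{eq:blow}, and let $E=\phi^{-1}(\mr{Sing}(X))$ be the exceptional locus. Since $X$ (and $\wt{X}$, $E$) is projective, Borel--Moore homology coincides with ordinary homology, which carries a functorial mixed Hodge structure; the localization sequences used below are exact sequences of mixed Hodge structures and $\phi_*$ is a morphism of mixed Hodge structures (see \cite{pet}).

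First I would record that, by Theorem \ref{thm:final}(3), $\wt{X}$ satisfies the Hodge conjecture, and since $\wt{X}$ is smooth projective of dimension $2m-1$, Poincar\'e duality turns this into the surjectivity of the (covariant) cycle class map $\mr{cl}_p\colon A_p(\wt{X})\otimes\mb{Q}\to F^{-p}W_{-2p}H^{\mr{BM}}_{2p}(\wt{X},\mb{Q})$ for all $p>0$. As $H_{2p}(\wt{X},\mb{Q})$ is pure of weight $-2p$, this target is exactly the space of Hodge classes of type $(-p,-p)$ in $H_{2p}(\wt{X},\mb{Q})$.

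Next I would establish the vanishing $H^{\mr{BM}}_{\mathrm{odd}}(E,\mb{Q})=0$. From the description preceding Lemma \ref{lem:van-qua}, $E$ is a disjoint union over the singular points of $X$ of chains $E_1\cup\dots\cup E_r$ whose components $E_i$ are (blow-ups of) projective cones over non-singular quadrics or non-singular quadrics, and whose pairwise intersections $E_i\cap E_{i+1}$ are non-singular quadrics. All such varieties have vanishing odd Borel--Moore homology (by standard facts about quadrics together with the blow-up formula, cf. the proof of Lemma \ref{lem:van-qua}), and this propagates to $E$ by Mayer--Vietoris. Feeding this into the localization sequence of the pair $(\wt{X},E)$, together with the localization sequence of $(X,\mr{Sing}(X))$ and the fact that $\dim\mr{Sing}(X)=0$, yields for every $p>0$ an isomorphism $H^{\mr{BM}}_{2p}(X)\cong H^{\mr{BM}}_{2p}(\wt{X}\setminus E)$ and a surjection $H^{\mr{BM}}_{2p}(\wt{X})\twoheadrightarrow H^{\mr{BM}}_{2p}(\wt{X}\setminus E)$; composing the latter with the inverse of the former recovers $\phi_*$. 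Hence $\phi_*\colon H_{2p}(\wt{X},\mb{Q})\to H_{2p}(X,\mb{Q})$ is a surjective morphism of mixed Hodge structures, so in particular $H^{\mr{BM}}_{2p}(X,\mb{Q})$ is pure of weight $-2p$ and $F^{-p}W_{-2p}H^{\mr{BM}}_{2p}(X,\mb{Q})$ is precisely the space of $(-p,-p)$-classes in $H_{2p}(X,\mb{Q})$.

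To finish, I would use that the category of polarizable rational Hodge structures is semisimple: the surjection $\phi_*$ splits as a morphism of Hodge structures, hence restricts to a surjection from the $(-p,-p)$-classes of $H_{2p}(\wt{X})$ onto those of $H_{2p}(X)$. Given $\tau\in F^{-p}W_{-2p}H^{\mr{BM}}_{2p}(X,\mb{Q})$, lift it to a Hodge class $\tilde{\tau}$ on $\wt{X}$, write $\tilde{\tau}=\mr{cl}_p(\tilde z)$ with $\tilde z\in A_p(\wt{X})\otimes\mb{Q}$ using the Hodge conjecture for $\wt{X}$, and invoke functoriality of the cycle class map under the proper morphism $\phi$ to get $\tau=\phi_*\mr{cl}_p(\tilde z)=\mr{cl}_p(\phi_*\tilde z)$. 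Thus $\tau$ lies in the image of $\mr{cl}_p\colon A_p(X)\otimes\mb{Q}\to F^{-p}W_{-2p}H^{\mr{BM}}_{2p}(X,\mb{Q})$, proving the corollary. The main obstacle is the third step: realizing the localization sequences as exact sequences of mixed Hodge structures and checking the vanishing of odd Borel--Moore homology of the exceptional locus $E$; once $\phi_*$ is seen to be a surjection of pure Hodge structures, the remainder is formal.
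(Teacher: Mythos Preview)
Your proof is correct and follows essentially the same route as the paper: reduce to the Hodge conjecture on $\wt{X}$ (Theorem~\ref{thm:final}(3)), push forward along $\phi$, and use functoriality of the cycle class map. The one substantive difference is in how you obtain surjectivity of $\phi_*$ on the piece $F^{-p}W_{-2p}$. You compute explicitly that $H_{\mathrm{odd}}^{\mr{BM}}(E,\mb{Q})=0$ by analyzing the components $E_i$ and their intersections and running Mayer--Vietoris; this is correct but uses the specific geometry of the $A_n$ resolution. The paper instead observes that for \emph{any} projective variety $E$ the weights on $H^{\mr{BM}}_{2p-1}(E,\mb{Q})$ lie in $[-(2p-1),0]$, so $W_{-2p}H^{\mr{BM}}_{2p-1}(E,\mb{Q})=0$; strictness of the long exact sequence of mixed Hodge structures then immediately gives surjectivity of $\phi_*$ on $F^{-p}W_{-2p}$. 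This weight argument is shorter, avoids the case analysis on $E$, and would apply verbatim to other isolated singularities. Also, your appeal to semisimplicity of polarizable Hodge structures to lift Hodge classes is fine but unnecessary: a surjective morphism of pure Hodge structures is automatically strict, so it surjects on $(-p,-p)$-classes without needing a splitting.
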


\begin{proof}
By Theorem \ref{thm:final}, $\wt{X}$ satisfies the Hodge conjecture. Since the cycle class commutes with cap product, this implies 
\[\mr{cl}:A_p(\wt{X}) \to F^{-p}W_{-2p} H_{2p}^{\mr{BM}}(\wt{X})\, \mbox{ is surjective, for all } p > 0.\]
Denote by $E$ the exceptional divisor for the blow-up map from $\wt{X}$ to $X$.
 Since $X$ has only isolated singularities, $H_{2p}^{\mr{BM}}(X) \cong H_{2p}^{\mr{BM}}(X_{\mr{sm}})$, where $X_{\mr{sm}}$ is the smooth locus in $X$
 and we have a homology long 
 exact sequence:
 \[ ... \to H_{2p}^{\mr{BM}}(\wt{X},\mb{Q}) \xrightarrow{\phi_*} H_{2p}^{\mr{BM}}(X,\mb{Q}) \to  H_{2p-1}^{\mr{BM}}(E, \mb{Q}) \to ...\]
 Since $H_{2p-1}^{\mr{BM}}(E,\mb{Q})$ has weights between $-(2p-1)$ and $0$, the natural map from 
 $F^{-p}W_{-2p} H_{2p}^{\mr{BM}}(\wt{X}, \mb{Q})$ to $F^{-p}W_{-2p} H_{2p}^{\mr{BM}}(X, \mb{Q})$ is surjective. Then, we have the following commutative 
 diagram:
 \[\begin{diagram}
 A_p(\wt{X})&\rTo^{\phi_*}&A_p(X)\\
 \dOnto^{\mr{cl}}&\circlearrowleft&\dTo^{\mr{cl}}\\
 F^{-p}W_{-2p} H_{2p}^{\mr{BM}}(\wt{X}, \mb{Q})&\rOnto^{\phi_*}&F^{-p}W_{-2p} H_{2p}^{\mr{BM}}(X, \mb{Q})     
 \end{diagram}.\]
 By the commutativity of the diagram, this implies the right hand side vertical arrow is surjective for all $p>0$. In other words, $X$
 satisfies Jannsen's homological Hodge conjecture. This proves the corollary.
 \end{proof}

As before, given a smooth, projective variety $W$, we denote by $H^{2i}_A(W)$ the image of the cycle class map.
Recall, the Lefschetz Standard conjecture A:

\begin{lsa}
    Given a smooth, projective variety $W$ of dimension $2m-1$, the natural morphism from 
$H^{2m-2-2i}_A(W,\mb{Q})$ to $H^{2m+2i}_A(W,\mb{Q})$ induced by cup-product 
with a hyperplane section is an isomorphism for all $i \ge 0$.
\end{lsa}
 We prove:

\begin{cor}\label{cor:n-odd}
   Setup as in Theorem \ref{thm:final}. If $n$ is even and the Lefschetz standard conjecture A holds true, then 
   $\wt{X}$ satisfies the Hodge conjecture.
\end{cor}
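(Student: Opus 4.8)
Under the hypotheses of the corollary we work with the smooth, projective resolution $\wt{X}$ of dimension $2m-1$ produced in Theorem \ref{thm:final}, so that proving the Hodge conjecture for $\wt{X}$ amounts to establishing $\mr{HC}(p)$ for every $p$, i.e.\ that $H^{2p}_A(\wt{X})=H^{2p}_{\mr{Hdg}}(\wt{X})$ for all $p$. The plan is to push everything onto the high-degree range, where the conjecture is already available: Theorem \ref{thm:final}(2) gives $\mr{HC}(p)$ for all $p\ge m$, while $\mr{HC}(0)$ is immediate. Thus only the range $0<p<m$ remains, and I would deduce it from the complementary high degrees by combining the hard Lefschetz theorem with the Lefschetz standard conjecture A.

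Fix an ample class $\mb{L}$ on $\wt{X}$ and an integer $p$ with $0<p<m$, and put $i:=m-1-p\ge 0$. Hard Lefschetz furnishes the isomorphism of rational Hodge structures
\[\cup\,\mb{L}^{2m-1-2p}:\ H^{2p}(\wt{X},\mb{Q})\xrightarrow{\sim}H^{2(2m-1-p)}(\wt{X},\mb{Q}),\]
in which the exponent equals $1+2i$, exactly the power occurring in the Lefschetz standard conjecture A, and the target degree corresponds to $p':=2m-1-p\ge m$. Being a morphism of Hodge structures, this map restricts to an isomorphism $H^{2p}_{\mr{Hdg}}(\wt{X})\xrightarrow{\sim}H^{2p'}_{\mr{Hdg}}(\wt{X})$, and since $p'\ge m$, Theorem \ref{thm:final}(2) identifies the latter with $H^{2p'}_A(\wt{X})$.

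The argument I would then run is the classical degree-swap. Given a Hodge class $\alpha\in H^{2p}_{\mr{Hdg}}(\wt{X})$, its image $\mb{L}^{2m-1-2p}\cup\alpha$ lies in $H^{2p'}_{\mr{Hdg}}(\wt{X})=H^{2p'}_A(\wt{X})$ and is therefore algebraic. The Lefschetz standard conjecture A asserts that cup product with $\mb{L}^{2m-1-2p}$ induces an isomorphism $H^{2p}_A(\wt{X})\xrightarrow{\sim}H^{2p'}_A(\wt{X})$; hence there is an algebraic class $\beta\in H^{2p}_A(\wt{X})$ with $\mb{L}^{2m-1-2p}\cup\beta=\mb{L}^{2m-1-2p}\cup\alpha$. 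As cup product with $\mb{L}^{2m-1-2p}$ is injective on all of $H^{2p}(\wt{X},\mb{Q})$ by hard Lefschetz, I can cancel it to conclude $\alpha=\beta\in H^{2p}_A(\wt{X})$, so $\alpha$ is algebraic. This yields $\mr{HC}(p)$ for every $0<p<m$, which together with the range $p\ge m$ gives the full Hodge conjecture for $\wt{X}$.

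The step I expect to carry the real weight is the interface between the two inputs: the Lefschetz standard conjecture A is formulated entirely on the algebraic subspaces $H^{\bullet}_A$, whereas hard Lefschetz operates on the whole of $H^{\bullet}(\wt{X},\mb{Q})$. The argument hinges on the fact that both maps are cup product with the \emph{same} class $\mb{L}^{2m-1-2p}$, so that the conjectural isomorphism on algebraic classes sits inside the genuine hard Lefschetz isomorphism; this compatibility is precisely what licenses the cancellation $\alpha=\beta$, and is the point I would verify most carefully. The remaining checks are bookkeeping: the exponent $2m-1-2p$ is non-negative exactly on the outstanding range $0<p\le m-1$, and the dual index $p'=2m-1-p$ always satisfies $p'\ge m$, placing it in the regime covered by Theorem \ref{thm:final}(2).
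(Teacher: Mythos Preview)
Your proposal is correct and follows essentially the same route as the paper: invoke $\mr{HC}(p)$ for $p\ge m$ from Theorem \ref{thm:final}(2), use hard Lefschetz to identify Hodge classes in complementary degrees, and then apply the Lefschetz standard conjecture A on the algebraic subspaces to transfer algebraicity back to the low-degree range. Your explicit cancellation step via the injectivity of $\cup\,\mb{L}^{2m-1-2p}$ is exactly the compatibility the paper uses tacitly when it concludes $H^{2m-2-2i}_A(\wt{X})\cong H^{2m-2-2i}_{\mr{Hdg}}(\wt{X})$ from the two isomorphisms.
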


\begin{proof}
   By the Hard Lefschetz theorem, the 
    cup-product by a hyperplane section $\mb{L}$ induces an isomorphism \[\cup \mb{L}: H^{2m-2-2i}_{\mr{Hdg}}(\wt{X},\mb{Q}) \xrightarrow{\sim} H^{2m+2i}_{\mr{Hdg}}(\wt{X},\mb{Q})\, \mbox{ for all }\, i \ge 0.\]
    By Theorem \ref{thm:final} we know that $\wt{X}$ satisfies $\mr{HC}(p)$ for all $p \ge m$ i.e., $H^{2m+2i}_A(\wt{X}) \cong H^{2m+2i}_{\mr{Hdg}}(\wt{X})$
    for all $i \ge 0$. Then, Lefschetz standard conjecture A implies that $H^{2m-2-2i}_A(\wt{X}) \cong H^{2m-2-2i}_{\mr{Hdg}}(\wt{X})$
    for all $i \ge 0$. Hence, $\wt{X}$ satisfies the Hodge conjecture. This proves the corollary.
\end{proof}

\appendix

\section{Commutativity properties of monodromy}\label{app:mon}
We now study some commutativity properties of the monodromy operators and their logarithms.
Setup as in \S \ref{se01}. Denote by $T$ the monodromy operator and $N$ the logarithm of the unipotent part of $T$ as defined in \S \ref{sec:lhf}.

\begin{lem}\label{lem:mon}
We have the following:
\begin{enumerate}
    \item cup-product commutes with monodromy i.e., 
    \[T(\alpha \cup \beta)=T(\alpha) \cup T(\beta)\, \mbox{ for any } \alpha \in H^i(\mc{X}_\infty, \mb{Q}) \mbox{ and } \beta \in H^j(\mc{X}_\infty, \mb{Q}). \]
    \item pull-back commutes with monodromy i.e., given two smooth, projective families:
    \[ \pi_1: \mc{X} \to \Delta^*, \, \pi_2: \mc{Y} \to \Delta^*\, \mbox{ and } f: \mc{X} \to \mc{Y}\, \mbox{ satisfying } \pi_1=\pi_2 \circ f,\]
    we have $T^{(\pi_1)} \circ f^*(\alpha)=f^* \circ T^{(\pi_2)}(\alpha)$ for any $\alpha \in H^*(\mc{Y}_\infty, \mb{Q})$,
    \item K\"{u}nneth decomposition commutes with monodromy i.e., given two smooth, projective families $\pi_1, \pi_2$ as in $(2)$ we have 
    \[T^{(\pi_1 \times \pi_2)} \circ \mr{KD}_i (\alpha \otimes \beta)=\mr{KD}_i \circ \left(T^{(\pi_1)} \times T^{(\pi_2)}\right) (\alpha \otimes \beta)\]
    for any $\alpha \in H^i(\mc{X}_\infty, \mb{Q}),\, \beta \in H^{m-i}(\mc{Y}_\infty, \mb{Q})$ and 
    % the following commutative diagram:
    % \[\begin{diagram}
    %    \bigoplus_i H^i(\mc{X}_\infty, \mb{Q}) \otimes H^{m-i}(\mc{Y}_{\infty}, \mb{Q}) &\rTo^{\oplus_i \mr{KD}_i}_{\sim} & 
    %     H^{m}(\mc{X}_\infty \times_{\mf{h}} \mc{Y}_\infty, \mb{Q})\\
    %     \dTo^{T^{(\pi_1)} \otimes T^{(\pi_2)}}& \circlearrowleft &\dTo^{T^{(\pi_1 \times \pi_2)}}\\
    %      \bigoplus_i H^i(\mc{X}_\infty, \mb{Q}) \otimes H^{m-i}(\mc{Y}_{\infty}, \mb{Q}) &\rTo^{\oplus_i \mr{KD}_i}_{\sim} & 
    %     H^{m}(\mc{X}_\infty \times_{\mf{h}} \mc{Y}_\infty, \mb{Q})
    % \end{diagram}\]
    $\mr{KD}_i$ is the composed (K\"{u}nneth decomposition) morphism  {\small 
    \[ \mr{KD}_i: H^i(\mc{X}_\infty, \mb{Q}) \otimes H^{m-i}(\mc{Y}_{\infty}, \mb{Q}) \xrightarrow{\pr_1^* \otimes \pr_2^*} 
        H^i(\mc{X}_\infty \times_{\mf{h}} \mc{Y}_\infty, \mb{Q}) \otimes H^{m-i}(\mc{X}_\infty \times_{\mf{h}} \mc{Y}_{\infty}, \mb{Q}) \xrightarrow{\cup} 
        H^{m}(\mc{X}_\infty \times_{\mf{h}} \mc{Y}_\infty, \mb{Q}),\]}
    $\pr_1, \pr_2$ are natural projection maps from $\mc{X}_\infty \times_{\mf{h}} \mc{Y}_\infty$ to $\mc{X}_\infty, \mc{Y}_\infty$, respectively.
    \end{enumerate}
\end{lem}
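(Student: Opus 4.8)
The plan is to deduce all three assertions from the single structural fact that the monodromy operator is pullback along an automorphism of the total space $\mc{X}_\infty$, combined with the elementary functoriality of cup-products and of pullbacks in singular cohomology. Recall from \S\ref{sec:lhf} that $T$ is by definition $\wt{h}^{*}$, where $\wt{h}\colon\mc{X}_\infty\to\mc{X}_\infty$ is the automorphism of $\mc{X}_\infty=\mc{X}_{\Delta^{*}}\times_{\Delta^{*}}\mf{h}$ induced by the deck transformation $h\colon u\mapsto u+1$ acting on the $\mf{h}$-factor (equivalently, $T$ is the composition displayed in \eqref{eq:mon}). All three statements then fall out by unwinding this description on the relevant total spaces.

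For $(1)$: since $\wt{h}$ is a morphism of complex varieties, the induced map $\wt{h}^{*}=T$ on $\bigoplus_{q}H^{q}(\mc{X}_\infty,\mb{Q})$ is a homomorphism of graded rings for the cup-product, so $T(\alpha\cup\beta)=\wt{h}^{*}(\alpha\cup\beta)=\wt{h}^{*}\alpha\cup\wt{h}^{*}\beta=T(\alpha)\cup T(\beta)$. For $(2)$: a morphism $f\colon\mc{X}\to\mc{Y}$ over $\Delta^{*}$ base-changes along $e\colon\mf{h}\to\Delta^{*}$ to a morphism $f_\infty\colon\mc{X}_\infty\to\mc{Y}_\infty$, and by the universal property of the fibre products $f_\infty$ intertwines the two deck automorphisms, i.e. $\wt{h}_{\mc{Y}}\circ f_\infty=f_\infty\circ\wt{h}_{\mc{X}}$ (both sides cover $h$ on $\mf{h}$). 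Applying $(-)^{*}$ reverses the order of composition and yields $f_\infty^{*}\circ\wt{h}_{\mc{Y}}^{*}=\wt{h}_{\mc{X}}^{*}\circ f_\infty^{*}$, that is $f^{*}\circ T^{(\pi_{2})}=T^{(\pi_{1})}\circ f^{*}$.

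For $(3)$: write the K\"{u}nneth map explicitly as $\mr{KD}_{i}(\alpha\otimes\beta)=\pr_{1}^{*}\alpha\cup\pr_{2}^{*}\beta$ on $\mc{X}_\infty\times_{\mf{h}}\mc{Y}_\infty$, where $\pr_{1},\pr_{2}$ are the two projections; these are the base changes along $e$ of the projections $\mc{X}\times_{\Delta^{*}}\mc{Y}\to\mc{X}$ and $\mc{X}\times_{\Delta^{*}}\mc{Y}\to\mc{Y}$. Applying $(1)$ to the family $\pi_{1}\times\pi_{2}$ and then $(2)$ to each of these projections gives
\begin{align*}
T^{(\pi_{1}\times\pi_{2})}\big(\mr{KD}_{i}(\alpha\otimes\beta)\big)
&=T^{(\pi_{1}\times\pi_{2})}(\pr_{1}^{*}\alpha)\cup T^{(\pi_{1}\times\pi_{2})}(\pr_{2}^{*}\beta)\\
&=\pr_{1}^{*}\big(T^{(\pi_{1})}\alpha\big)\cup\pr_{2}^{*}\big(T^{(\pi_{2})}\beta\big)\\
&=\mr{KD}_{i}\big(T^{(\pi_{1})}\alpha\otimes T^{(\pi_{2})}\beta\big),
\end{align*}
which is $(3)$. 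I do not expect a genuine obstacle here; the only point needing a line of care is the compatibility used in $(2)$ (and reused for the projections in $(3)$) between a base-changed morphism and the deck transformations, which is immediate because both $f_\infty$ and $\wt{h}$ are defined purely through operations on the $\mf{h}$-factor of the fibre product, so they commute by the universal property. One could alternatively argue directly from the description \eqref{eq:mon} of $T$ as $(i_{u}^{*})^{-1}\circ i_{u+1}^{*}$ together with the naturality of the isomorphisms $i_{s}^{*}$, but the automorphism viewpoint keeps the bookkeeping minimal.
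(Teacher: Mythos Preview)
Your proof is correct and follows essentially the same approach as the paper: both identify $T$ with pullback along the deck automorphism of $\mc{X}_\infty$, deduce $(1)$ from the ring-homomorphism property of pullback, prove $(2)$ by noting that $f_\infty$ intertwines the deck automorphisms, and obtain $(3)$ by combining $(1)$ and $(2)$ applied to the projections. Your write-up is slightly more explicit about the role of the universal property of the fibre product in the commutation $\wt{h}_{\mc{Y}}\circ f_\infty=f_\infty\circ\wt{h}_{\mc{X}}$, but the argument is otherwise identical.
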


\begin{proof}
    Since pull-back morphism commutes with cup-product, \eqref{eq:mon} implies that the monodromy operator commutes with cup-product i.e., 
    we have $(1)$.

    We now prove $(2)$. Denote by $f_\infty: \mc{X}_\infty \to \mc{Y}_\infty$ be the map induced by $f$ and 
    \[h_1: \mc{X}_\infty \to \mc{X}_\infty,\, \, \, h_2:\mc{Y}_\infty \to \mc{Y}_\infty\]
    defined by 
     $(z,u)$ maps to $(z,u+1)$. Recall from \eqref{eq:mon} that 
    $T^{(\pi_1)} = h_1^*$ and $T^{(\pi_2)}= h_2^*$. Clearly, $f_\infty \circ h_1 = h_2 \circ f_\infty$. Therefore, for any 
  $\alpha \in H^*(\mc{Y}_\infty, \mb{Q})$ we have 
 \[T^{(\pi_1)} \circ f_\infty^*(\alpha)=h_1^* \circ f_\infty^*(\alpha)=f_\infty^* \circ h_2^*(\alpha)=f_\infty^* \circ T^{(\pi_2)}(\alpha).\] This proves $(2)$.

 Since the morphism $\mr{KD}_i$ is a composition of a pull-back morphism with the cup-product map, $(3)$ follows from $(1)$ and $(2)$. This proves the lemma.
\end{proof}

\begin{cor}\label{cor:mon}
    The following holds true:
    \begin{enumerate}
        \item Let $\pi$ be a smooth, projective family as in Lemma \ref{lem:mon}(1). Suppose that $T$ acts trivially on $H^m(\mc{X}_\infty, \mb{Q})$.
        Then, for any $\alpha \in H^m(\mc{X}_\infty, \mb{Q})$ and $\beta \in H^i(\mc{X}_\infty, \mb{Q})$, we have 
        \[N(\alpha \cup \beta)=\alpha \cup N(\beta),\]
        for any $i$,
        \item Let $\pi_1, \pi_2$ and $f$ be as in Lemma \ref{lem:mon}(2). Then, for any $\alpha \in H^i(\mc{X}_\infty, \mb{Q})$, we have 
        $N^{(\pi_1)} \circ f^*(\alpha)=f^* \circ N^{(\pi_2)}(\alpha)$,
        \item Let $\pi_1$, $\pi_2$ be as in Lemma \ref{lem:mon}(3). 
        Suppose that $T^{(\pi_1)}$ acts trivially on $H^i(\mc{X}_\infty, \mb{Q})$. Then, 
        for any $\alpha \in H^i(\mc{X}_\infty, \mb{Q})$ and $\beta \in H^{m-i}(\mc{Y}_\infty, \mb{Q})$, we have 
        \[N^{(\pi_1 \times \pi_2)}(\mr{KD}_i(\alpha \otimes \beta))=\mr{KD}_i(\alpha \otimes N^{(\pi_2)}(\beta))\]
        where $\mr{KD}_i$ is as in Lemma \ref{lem:mon}, for any $i$.
    \end{enumerate}
\end{cor}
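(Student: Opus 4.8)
The plan is to deduce all three statements of the corollary from the corresponding items of Lemma \ref{lem:mon} by passing from the full monodromy operator $T$ (or $T^{(\pi)}$) to its logarithm $N$, using the hypothesis that $T$ acts trivially on the relevant cohomology group. The key observation is that when $T$ acts as the identity on a factor, multiplication or pullback by that factor is ``$N$-linear'': the log of the product acts only on the other factor.

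\textbf{Item (2).} This is the cleanest case and I would do it first. Lemma \ref{lem:mon}(2) gives $T^{(\pi_1)} \circ f^* = f^* \circ T^{(\pi_2)}$ as operators on $H^*(\mc{Y}_\infty,\mb{Q})$. Since $f^*$ is a morphism of local systems (hence preserves the decomposition into unipotent and finite-order parts of the monodromy), it intertwines the unipotent parts $T^{(\pi_1),\mr{uni}}$ and $T^{(\pi_2),\mr{uni}}$ as well. Taking formal logarithms of the unipotent parts (a polynomial expression $N = \log T^{\mr{uni}} = (T^{\mr{uni}} - \mr{Id}) - \tfrac12(T^{\mr{uni}}-\mr{Id})^2 + \cdots$, which terminates since $T^{\mr{uni}}-\mr{Id}$ is nilpotent), the intertwining relation is preserved term by term, giving $N^{(\pi_1)} \circ f^* = f^* \circ N^{(\pi_2)}$.

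\textbf{Items (1) and (3).} For item (1): by Lemma \ref{lem:mon}(1), $T(\alpha\cup\beta) = T(\alpha)\cup T(\beta)$; by hypothesis $T(\alpha) = \alpha$, so the operator $\beta \mapsto \alpha \cup \beta$ commutes with $T$ on $H^*(\mc{X}_\infty,\mb{Q})$, hence with $T^{\mr{uni}}$, hence with $N = \log T^{\mr{uni}}$. This yields $N(\alpha\cup\beta) = \alpha \cup N(\beta)$. The one point requiring a line of care is that cup-product with $\alpha$ must genuinely commute with the \emph{unipotent part} of $T$ and not just with $T$; this follows because the unipotent part is a polynomial in $T$ (Jordan decomposition of the commuting semisimple and unipotent parts), so anything commuting with $T$ commutes with $T^{\mr{uni}}$. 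Item (3) is the same argument applied to $\mr{KD}_i$: Lemma \ref{lem:mon}(3) gives $T^{(\pi_1\times\pi_2)}\circ \mr{KD}_i = \mr{KD}_i \circ (T^{(\pi_1)}\times T^{(\pi_2)})$, and the hypothesis $T^{(\pi_1)}|_{H^i(\mc{X}_\infty)} = \mr{Id}$ collapses the right-hand factor, so $\mr{KD}_i(\alpha\otimes -)$ commutes with the monodromy and hence with $N^{(\pi_1\times\pi_2)}$; pushing $N^{(\pi_1\times\pi_2)}$ through, and noting that on the $\mc{Y}_\infty$-factor it restricts to $N^{(\pi_2)}$ (again via Lemma \ref{lem:mon}(3) with the $\pi_1$-factor trivial), gives the claimed formula.

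\textbf{Main obstacle.} There is no deep obstacle here; the corollary is essentially a formal consequence of the lemma. The only subtlety worth flagging in the write-up is the passage $T \rightsquigarrow N = \log(T^{\mr{uni}})$: one must justify that a relation ``$g$ commutes with $T$'' (for $g$ an operator built from cup-product or pullback) upgrades to ``$g$ commutes with $N$''. This is because $T^{\mr{uni}}$ lies in the (commutative) algebra generated by $T$, so $g$ commutes with $T^{\mr{uni}}$, and $N$ is in turn a polynomial in $(T^{\mr{uni}}-\mr{Id})$. I would state this once at the start of the proof and then apply it uniformly to all three items.
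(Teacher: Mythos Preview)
Your proposal is correct and follows essentially the same line as the paper's proof: both reduce to showing that cup-product with $\alpha$ (resp.\ $f^*$, resp.\ $\mr{KD}_i(\alpha\otimes -)$) intertwines the relevant monodromy operators, and then pass to $N$ via the finite power-series expansion of the logarithm. You are in fact slightly more careful than the paper in distinguishing $T$ from $T^{\mr{uni}}$ via the Jordan decomposition; the paper's argument writes $N$ directly as a series in $(T-\mr{id})$ and carries out the recursion $(T-\mr{id})^n(\alpha\cup\beta)=\alpha\cup(T-\mr{id})^n(\beta)$ explicitly rather than invoking the abstract ``commutes with $T$ $\Rightarrow$ commutes with any polynomial in $T$'' principle.
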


\begin{proof}
(1): We first claim that it is sufficient to prove that for $\alpha \in H^m(\mc{X}_\infty, \mb{Q})$ and $\beta \in H^i(\mc{X}_\infty, \mb{Q})$, we have 
\begin{equation}\label{eq:cor-mon1}
    (T-\mr{id})(\alpha \cup \beta)=\alpha \cup (T-\mr{id})(\beta)
\end{equation}
    Indeed, if this is true then for any $n \ge 1$
    \[(T-\mr{id})^n(\alpha \cup \beta)=(T-\mr{id})^{n-1}((T-\mr{id})(\alpha \cup \beta))=(T-\mr{id})^{n-1}(\alpha \cup (T-\mr{id})(\beta)).\]
    Applying recursion on $n \ge 1$, we get that $(T-\mr{id})^n(\alpha \cup \beta)=\alpha \cup (T-\mr{id})^n(\beta)$. Therefore,
    
    \begin{align*}
        N(\alpha \cup \beta)&=\sum\limits_{k \ge 1} (-1)^{k+1} \frac{(T-\mr{id})^k(\alpha \cup \beta)}{k}=\sum\limits_{k \ge 1} (-1)^{k+1} \frac{\alpha \cup (T-\mr{id})^k(\beta)}{k}=\\ 
        &=\alpha \cup \sum\limits_{k \ge 1} (-1)^{k+1} \frac{\alpha \cup (T-\mr{id})^k(\beta)}{k}=\alpha \cup N(\beta)
    \end{align*}
    This proves our claim. Now, 
    \[ (T-\mr{id})(\alpha \cup \beta)=T(\alpha \cup \beta)-\alpha \cup \beta = T(\alpha) \cup T(\beta)- \alpha \cup \beta=\alpha \cup T(\beta)-\alpha \cup \beta
    =\alpha \cup (T-\mr{id})\beta,\]
    where the second equality follows from Lemma \ref{lem:mon} and we have the third equality since $T$ acts trivially on $H^m(\mc{X}_\infty)$. This proves 
    the equality \eqref{eq:cor-mon1}, thereby proving (1).

    (2): Arguing similarly as in the first part of the proof of (1), it suffices to show that 
    \[(T^{(\pi_1)}-\mr{id}) \circ f^*(\alpha)=f^* \circ (T^{(\pi_2)}-\mr{id})(\alpha).\]
    But this is easy to show:
    \[ (T^{(\pi_1)}-\mr{id}) \circ f^*(\alpha)= T^{(\pi_1)}\circ f^*(\alpha)-f^*(\alpha)=f^* \circ T^{(\pi_2)}(\alpha)-f^*(\alpha) = f^* \circ (T^{(\pi_2)}-\mr{id})\alpha\]
    where the second equality follows from Lemma \ref{lem:mon}. This proves (2).

    (3): Arguing similarly as in the first part of the proof of (1), it suffices to show that 
    \[(T^{(\pi_1 \times \pi_2)}-\mr{id}) (\mr{KD}_i(\alpha \otimes \beta))=\mr{KD}_i(\alpha \otimes (T^{(\pi_2)}-\mr{id})\beta).\]
    But, this equality follows from Lemma \ref{lem:mon}. Indeed, 
    \begin{align*}
        & (T^{(\pi_1 \times \pi_2)}-\mr{id}) (\mr{KD}_i(\alpha \otimes \beta)) = T^{(\pi_1 \times \pi_2)}(\mr{KD}_i(\alpha \otimes \beta))-\mr{KD}_i(\alpha \otimes \beta)=\\&=\mr{KD}_i(T^{(\pi_1)}(\alpha) \otimes T^{(\pi_2)}(\beta))-\mr{KD}_i(\alpha \otimes \beta)
         = \mr{KD}_i(\alpha \otimes T^{(\pi_2)}(\beta)) - \mr{KD}_i(\alpha \otimes \beta)=\\
        &=\mr{KD}_i(\alpha \otimes (T^{(\pi_2)}-\mr{id})\beta).
    \end{align*}
 where the first equality of the second line follows from Lemma \ref{lem:mon} and the last equality follows from linearity of $\mr{KD}_i$. This proves 
 (3) and hence the corollary.
\end{proof}

\section{Operational Chow group}\label{app:op}
Operational Chow groups (see \cite[\S $17$]{fult}) are generalizations of the classical Chow group.
The advantage of the operational Chow group is that it commutes with arbitrary pull-back and 
satisfies the usual projection formula.
Both notions of Chow groups coincide over non-singular projective varieties.
Here we list some properties. 
Let $X$ be a reduced projective scheme (possibly singular/ reducible). Denote by 
 $A^p(X)$ the $p$-th operational Chow group and by $A_q(X)$ the usual Chow group (free abelian group 
 generated by rational equivalence classes of dimension $q$ irreducible subvarieties of $X$).
 
\begin{thm}\label{th:Chow}
 Let $X$ be a reduced projective scheme. Then, the following are true:
 \begin{enumerate}
  \item if $X$ is irreducible and $\phi: \wt{X} \to X$ is a resolution of singularities with exceptional 
  divisor, then we have the following exact sequences for every $p$:
  \begin{align}
      & 0 \to A^p(X) \to A^p(\wt{X}) \oplus A^p(X_{\mr{sing}}) \to A^p(E)\, \mbox{ and } \label{eq:op2}\\
      & A_p(E) \to A_p(X_{\mr{sing}}) \oplus A_p(\wt{X}) \to A_p(X) \to 0. \label{eq:op3}
  \end{align}
  \item if $X=X_1 \cup X_2$ is the union of two distinct reduced projective without common component,
  then the following sequences are exact for all $p$:
  \begin{align}
      & 0 \to A^p(X) \to A^p(X_1) \oplus A^p(X_2) \to A^p(X_1 \cap X_2)\ \, \mbox{ and }\label{eq:op4}\\
      & A_p(X_1 \cap X_2) \to A_p(X_1) \oplus A_p(X_2) \to A_p(X_1 \cup X_2) \to 0 \label{eq:op5}
  \end{align}
  \item {Projection formula:} Given any morphism $f:X \to Y$, the cap-product map:
  \[A^p(X) \otimes A_q(X) \xrightarrow{\cap} A_{q-p}(X)\, \mbox{ (same for Y) satisfies }\, f_*(f^*\beta \cap \alpha)=\beta \cap f_*(\alpha)\]
  for any $\beta \in A_*(Y)$ and $\alpha \in A^*(X)$
  \item {Duality:} if $X$ is smooth, projective of dimension $n$, then the cap product map:
  \[ \cap [X]: A^p(X) \to A_{n-p}(X)\, \mbox{ is an isomorphism}.\]
 \end{enumerate}
\end{thm}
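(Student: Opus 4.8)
The plan is to treat the four assertions separately: $(3)$ and $(4)$ are standard facts about operational Chow groups that I would simply quote, whereas $(1)$ and $(2)$ require a short argument, split into a homological and a cohomological half. For $(4)$, the duality isomorphism $\cap[X]\colon A^p(X)\xrightarrow{\sim} A_{n-p}(X)$ for smooth projective $X$ of dimension $n$ is \cite[\S $17$]{fult}. For $(3)$, the projection formula is built into Fulton's formalism: by \cite[\S $17$]{fult} a class in $A^*(X)$ is a system of operators compatible with proper push-forward, and the cap product is set up precisely so that this compatibility becomes the displayed identity $f_*(f^*\beta\cap\alpha)=\beta\cap f_*(\alpha)$; so nothing is to be done beyond quoting \cite[\S $17$]{fult}.

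The content is in $(1)$ and $(2)$. \emph{Step 1: the homological ($A_*$) sequences.} Right-exactness of $A_p(X_1\cap X_2)\to A_p(X_1)\oplus A_p(X_2)\to A_p(X_1\cup X_2)\to 0$ is Mayer--Vietoris for Chow groups, \cite[\S $1.8$]{fult}. The blow-up sequence $A_p(E)\to A_p(X_{\mr{sing}})\oplus A_p(\wt{X})\to A_p(X)\to 0$ I would assemble from the localization sequence \cite[\S $1.8$]{fult} applied to the pairs $(\wt{X},E)$ and $(X,X_{\mr{sing}})$ together with the isomorphism $\wt{X}\setminus E\cong X\setminus X_{\mr{sing}}$; in characteristic $0$ one may also take $\wt{X}\to X$ to be an iterated blow-up along regularly embedded centres and reduce by induction to the single-blow-up case, \cite[\S $6.7$]{fult}.

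\emph{Step 2: the cohomological ($A^*$) sequences.} The key observation is that an element $c\in A^p(X)$ is, by definition, a compatible family of operators $c_g\colon A_*(X')\to A_{*-p}(X')$ indexed by morphisms $g\colon X'\to X$, and that for any such $g$ the base-changed data $\{X_i\times_X X'\}$ (respectively the base-changed square with vertices $\wt{X}\times_X X'$, $X_{\mr{sing}}\times_X X'$, $E\times_X X'$) is again a closed cover (respectively an abstract blow-up square), so the $A_*$-sequences of Step 1 apply to it. For injectivity of $A^p(X)\to A^p(X_1)\oplus A^p(X_2)$: if $c$ restricts to $0$ on $X_1$ and on $X_2$, then for each $g$ the surjectivity in the $A_*$-Mayer--Vietoris sequence writes every $\alpha\in A_*(X')$ as a sum of proper push-forwards from $A_*(X_i\times_X X')$, and compatibility of $c$ with proper push-forward, combined with the vanishing of $c$ on $X_1$ and $X_2$, forces $c_g(\alpha)=0$; hence $c=0$. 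For exactness in the middle: given $(c_1,c_2)$ agreeing on $X_1\cap X_2$, define $c_g(\alpha)$ by choosing a decomposition $\alpha=(\iota_1)_*\alpha_1+(\iota_2)_*\alpha_2$ with $\alpha_i\in A_*(X_i\times_X X')$ and setting $c_g(\alpha):=(\iota_1)_*(c_1)_{h_1}(\alpha_1)+(\iota_2)_*(c_2)_{h_2}(\alpha_2)$, where $h_i\colon X_i\times_X X'\to X_i$ is the projection and $(c_i)_{h_i}$ is the $h_i$-component of $c_i$; independence of the chosen decomposition uses the \emph{full} exactness of the $A_*$-sequence after base change together with $c_1|_{X_1\cap X_2}=c_2|_{X_1\cap X_2}$, after which one checks that $c$ satisfies Fulton's three compatibilities (proper push-forward, flat pull-back, intersection with Cartier divisors), each inherited from $c_1$ and $c_2$ by functoriality, and that $c|_{X_i}=c_i$. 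The blow-up sequence in $(1)$ follows by the same argument with $\{X_1,X_2\}$ replaced by $\{\wt{X},X_{\mr{sing}}\}$ and $X_1\cap X_2$ by $E$. Observe that the final maps of the $A^*$-sequences are in general not surjective, which is why they terminate at $A^p(X_1\cap X_2)$, respectively $A^p(E)$, and not at $0$.

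The step I expect to be the main obstacle is the gluing in Step 2: checking that the operator $c$ constructed from a compatible pair is well defined and satisfies all of Fulton's compatibility axioms. This requires knowing that the $A_*$-sequences of Step 1 remain exact after pulling back along an arbitrary $g\colon X'\to X$ --- immediate for the Mayer--Vietoris square, but needing some care for the blow-up square, where one must verify that base change preserves the abstract-blow-up property and re-run the localization (or induction) argument fibrewise.
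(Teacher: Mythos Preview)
Your treatment of $(3)$, $(4)$, and the $A_*$-sequences matches the paper exactly: the paper's proof is nothing more than a list of citations to Fulton \cite{fult} (Example~1.8.1 for the $A_*$-sequences, \S 17.3 for the projection formula, Corollary~17.4 for duality). The one substantive divergence is in the $A^*$-sequences \eqref{eq:op2} and \eqref{eq:op4}. The paper does not argue these directly but cites Gillet--Soul\'e \cite{soug}, Theorem~2(iii),(iv) and \S 3.1.1, where the operational Chow groups are shown to satisfy descent for abstract blow-up squares and closed covers; the exact sequences then drop out of that general framework. Your hands-on argument---glue an operational class from a compatible pair $(c_1,c_2)$ by writing every $\alpha\in A_*(X')$ as a push-forward from the base-changed pieces and checking well-definedness via the middle exactness of the $A_*$-sequence---is the direct route (essentially Kimura's argument underlying the descent statement) and is correct. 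The trade-off: citing \cite{soug} avoids the axiom-checking you flag as the main obstacle, at the cost of importing a heavier formalism; your approach is self-contained but requires verifying, as you note, that the abstract blow-up square remains one after arbitrary base change (it does: properness, closedness, and the isomorphism over the complement are all stable under fibre product) and then running through Fulton's three compatibilities, which is routine but tedious.
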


\begin{proof}
 \begin{enumerate}
  \item The exactness of \eqref{eq:op2} follows from \cite[Theorem $2(iii)$ and \S $3.1.1$]{soug}.
  The exactness of \eqref{eq:op3} is proved in \cite[Example $1.8.1$]{fult}.
  \item The exactness of \eqref{eq:op4} follows from \cite[Theorem $2(iv)$ and \S $3.1.1$]{soug}.
  The exactness of \eqref{eq:op5} follows from \cite[Example $1.8.1$]{fult}.  
  \item See \cite[\S $17.3$]{fult}.
  \item See \cite[Corollary $17.4$]{fult}.
 \end{enumerate}
\end{proof}


\begin{thebibliography}{10}

\bibitem{indpre}
S.~Basu, A.~Dan, and I.~Kaur.
\newblock Degeneration of intermediate {J}acobians and the {T}orelli theorem.
\newblock {\em Documenta Mathematica}, 24:1739--1767, 2019.

\bibitem{bishod}
I.~Biswas and M.~S. Narasimhan.
\newblock Hodge classes of moduli spaces of parabolic bundles over the general
  curve.
\newblock {\em Journal of Algebraic Geometry}, 6(4):697, 1997.

\bibitem{soub}
S.~Bloch, H.~Gillet, and C.~Soul{\'e}.
\newblock Non-archimedean {A}rakelov theory.
\newblock {\em Journal of Algebraic Geometry}, 4(4):427--486, 1995.

\bibitem{bros}
P.~Brosnan, H.~Fang, Z.~Nie, and G.~Pearlstein.
\newblock Singularities of admissible normal functions:.
\newblock {\em Inventiones mathematicae}, 177(3):599--629, 2009.

\bibitem{bur2}
D.~Burns.
\newblock On rational singularities in dimensions {$>$ 2}.
\newblock {\em Mathematische Annalen}, 211(3):237--244, 1974.

\bibitem{conmur}
A.~Conte and J.~P. Murre.
\newblock The {H}odge conjecture for fourfolds admitting a covering by rational
  curves.
\newblock {\em Mathematische Annalen}, 238(1):79--88, 1978.

\bibitem{D3}
A.~Dan.
\newblock On a conjecture by {G}riffiths and {H}arris concerning certain
  {N}oether--{L}efschetz loci.
\newblock {\em Communications in Contemporary Mathematics}, 17(5):1550002,
  2015.

\bibitem{dan7}
A.~Dan.
\newblock On generically non-reduced components of {H}ilbert schemes of smooth
  curves.
\newblock {\em Mathematische Nachrichten}, 290(17-18):2800--2814, 2017.

\bibitem{Dcont}
A.~Dan.
\newblock On a conjecture of {H}arris.
\newblock {\em Communications in Contemporary Mathematics}, 23(07):2050028,
  2021.

\bibitem{ink}
A.~Dan and I.~Kaur.
\newblock Semi-regular varieties and variational {H}odge conjecture.
\newblock {\em Comptes Rendus Mathematique}, 354(3):297--300, 2016.

\bibitem{hodc}
A.~Dan and I.~Kaur.
\newblock Hodge conjecture for the moduli space of semi-stable sheaves over a
  nodal curve.
\newblock {\em Annali di Matematica Pura ed Applicata (1923-)}, pages 1--20,
  2022.

\bibitem{dan-chow}
A.~Dan and I.~Kaur.
\newblock Chow cohomology and {L}efschetz {{(1,1)-}theorem}.
\newblock {\em arXiv preprint arXiv:2506.13220}, 2025.

\bibitem{dehod1}
M.~de~Cataldo and L.~Migliorini.
\newblock On singularities of primitive cohomology classes.
\newblock {\em Proceedings of the American Mathematical Society},
  137(11):3593--3600, 2009.

\bibitem{decomb}
M.~A. de~Cataldo, L.~Migliorini, and M.~Musta{\c{t}}{\u{a}}.
\newblock Combinatorics and topology of proper toric maps.
\newblock {\em Journal f{\"u}r die reine und angewandte Mathematik (Crelles
  Journal)}, 2018(744):133--163, 2018.

\bibitem{fult}
W.~Fulton.
\newblock {\em Intersection {T}heory}.
\newblock Springer Science \& Business Media, 2013.

\bibitem{huy}
D.~Huybrechts and M.~Lehn.
\newblock {\em The geometry of moduli spaces of sheaves}.
\newblock Springer, 2010.

\bibitem{huycub}
Daniel Huybrechts.
\newblock {\em The geometry of cubic hypersurfaces}, volume 206.
\newblock Cambridge University Press, 2023.

\bibitem{jann}
U.~Jannsen.
\newblock {\em Mixed motives and algebraic {K-t}heory}, volume 1400.
\newblock Springer, 2006.

\bibitem{kuli}
V.~S. Kulikov.
\newblock {\em Mixed Hodge structures and singularities}, volume 132.
\newblock Cambridge University Press, 1998.

\bibitem{latv2}
R.~Laterveer.
\newblock A remark on the motive of the {F}ano variety of lines of a cubic.
\newblock {\em Annales math{\'e}matiques du Qu{\'e}bec}, 41:141--154, 2017.

\bibitem{lewsing2}
J.~D Lewis.
\newblock The {H}odge conjecture for a certain class of singular varieties.
\newblock {\em Mathematische Zeitschrift}, 224(1):25--31, 1997.

\bibitem{lewis}
J.~D. Lewis and B.~B. Gordon.
\newblock {\em A survey of the {H}odge conjecture}, volume~10.
\newblock American Mathematical Soc., 2016.

\bibitem{mark1}
E.~Markman.
\newblock Generators of the cohomology ring of moduli spaces of sheaves on
  symplectic surfaces.
\newblock {\em Journal fur die reine und angewandte Mathematik}, 544, 2002.

\bibitem{Matt}
A.~Mattuck.
\newblock Cycles on abelian varieties.
\newblock {\em Proceedings of the American Mathematical Society}, 9(1):88--98,
  1958.

\bibitem{morri}
D.~R. Morrison.
\newblock The {C}lemens-{S}chmid exact sequence and applications.
\newblock {\em Topics in transcendental algebraic geometry (Princeton, NJ,
  1981/1982)}, 106:101--119, 1984.

\bibitem{pet}
C.~Peters and J.~H.~M. Steenbrink.
\newblock {\em Mixed {H}odge structures}, volume~52.
\newblock Springer Science \& Business Media, 2008.

\bibitem{schvar}
W.~Schmid.
\newblock Variation of {H}odge structure: the singularities of the period
  mapping.
\newblock {\em Inventiones mathematicae}, 22(3-4):211--319, 1973.

\bibitem{schoen}
C.~Schoen.
\newblock Algebraic cycles on certain desingularized nodal hypersurfaces.
\newblock {\em Mathematische Annalen}, 270:17--27, 1985.

\bibitem{shio}
T.~Shioda.
\newblock The {H}odge conjecture for {F}ermat varieties.
\newblock {\em Mathematische Annalen}, 245(2):175--184, 1979.

\bibitem{soug}
C~Soul{\'e} and H~Gillet.
\newblock Descent, {M}otives and {K-t}heory.
\newblock {\em Journal f{\"u}r die reine und angewandte Mathematik},
  478:127--176, 1996.

\bibitem{ste1}
J.~Steenbrink.
\newblock Limits of {H}odge structures.
\newblock {\em Inventiones mathematicae}, 31:229--257, 1976.

\bibitem{thohod1}
R.~Thomas.
\newblock Nodes and the {H}odge conjecture.
\newblock {\em Journal of Algebraic Geometry}, 14(1):177--185, 2005.

\bibitem{totchow}
B~Totaro.
\newblock Chow groups, {C}how cohomology, and linear varieties.
\newblock In {\em Forum of Mathematics, Sigma}, volume~2. Cambridge University
  Press, 2014.

\bibitem{v4}
C.~Voisin.
\newblock {\em {H}odge Theory and Complex Algebraic Geometry-I}.
\newblock Cambridge studies in advanced mathematics-76. Cambridge University
  press, 2002.

\end{thebibliography}
\end{document}